%% file: trustregion_afem.tex
\documentclass[11pt]{article}
\usepackage{amsthm}
\usepackage[margin=1in]{geometry}
\usepackage[square,numbers]{natbib}
\usepackage{algorithm}
\input{bmacros}           
\newenvironment{msccodes}{%
  \subsection*{Mathematics Subject Classification}
	}{}
\newenvironment{keywords}{%
  \subsection*{Keywords}
	}{}
\newtheorem{assumption}{Assumption}
\newtheorem{condition}{Condition}
\newtheorem{proposition}{Proposition}
\newtheorem{theorem}{Theorem}
\usepackage{cleveref}
\crefname{assumption}{Assumption}{Assumptions}
\Crefname{assumption}{Assumption}{Assumptions}
\crefname{condition}{Condition}{Conditions}
\Crefname{condition}{Condition}{Conditions}

\title{An online adaptive finite-element method for nonsmooth PDE-constrained optimization%
\thanks{%
This work is partially supported by the Office of Naval Research (ONR) under Award 
NO: N00014-24-1-2147, NSF grant DMS-2408877, the Air Force Office of Scientific
Research (AFOSR) under Award NO: FA9550-25-1-0231. This research was sponsored,
in part, by the Department of Energy Office of Science under the Advanced
Scientific Computing Research ASCEND and Early Career Research Programs.  Sandia
National Laboratories is a multimission laboratory managed and operated by
National Technology and Engineering Solutions of Sandia, LLC., a wholly owned
subsidiary of Honeywell International, Inc., for the U.S.~Department of
Energy's National Nuclear Security Administration under contract DE-NA0003525.
This paper describes objective technical results and analysis. Any subjective
views or opinions that might be expressed in the paper do not necessarily
represent the views of the U.S.~Department of Energy or the United States
Government.
}}

\author{
Harbir Antil\footnotemark[2] \and
Robert J. Baraldi\footnotemark[3] \and 
Rohit Khandelwal\footnotemark[4] \and
Drew P. Kouri\footnotemark[3]
} 

\ifpdf
\hypersetup{
  pdftitle={Adaptive Trust Region Method},
  pdfauthor={Harbir Antil, Robert J. Baraldi, Rohit Khandelwal and, Drew P. Kouri}
}
\fi

\graphicspath{{./../math-comp-draft/}{figs/} }
\DeclareGraphicsExtensions{.pdf}

\begin{document}
\maketitle
\renewcommand{\thefootnote}{\fnsymbol{footnote}}
\footnotetext[2]{Department of Mathematical Sciences and Center for Mathematics and Artificial Intelligence, 
George Mason University, Fairfax, VA, 22030, USA; hantil@gmu.edu}
\footnotetext[4]{Faculty of Mathematical Sciences, South Asian University, New Delhi, India, 110068; rkhandel@sau.int}
\footnotetext[3]{Optimization and Uncertainty Quantification, Sandia National
	Laboratories, PO Box 5800, Albuquerque, 87185-1320, NM, USA;
	\{rjbaral@sandia.gov, dpkouri@sandia.gov\}}

\begin{abstract}
\input{abstract}
\end{abstract}

\begin{keywords}
Nonsmooth Optimization, Adaptive Finite Elements, Trust Regions,
Proximal Methods, Convex Constraints
\end{keywords}

\begin{msccodes}
49M37, 49K20, 49J20, 65M12, 65M15, 65M60, 90C30, 93C20
\end{msccodes}


\section{Introduction}
\label{sec:introduction}
\input{introduction}

\section{Preliminary Results}
\label{sec:model_problem}
\input{preliminary}

\section{Adaptive Finite Elements}
\label{sec:adaptive}
\input{adaptivefem}

\section{Trust-Region Algorithm}
\label{sec:afemtrust}
\input{adaptivetrust}

\section{Numerical Examples}
\label{sec:numerical_examples}
\input{numerical_examples}

\bibliographystyle{abbrvnat}
\bibliography{references}

\end{document}

%% file: bmacros.tex
\usepackage{latexsym,amsmath,amssymb,amsfonts}
\usepackage{amsopn}
\usepackage{amsbsy}
\usepackage{dsfont}
\usepackage{mathtools} 
\usepackage{stmaryrd}

\usepackage{graphicx}
\usepackage{thmtools}
\usepackage[caption=false]{subfig}
\usepackage{epstopdf}
\usepackage{algorithmic}
\usepackage{enumitem}
\usepackage{psfrag}
\usepackage{graphics}
\usepackage{xcolor}
\usepackage{soul}       
\usepackage[textsize=small]{todonotes}
\usepackage{comment}
\usepackage{lipsum}
\usepackage{hyperref} 
\usepackage{tikz} 
\usetikzlibrary{calc}

\usepackage{booktabs,array}

\ifpdf
  \DeclareGraphicsExtensions{.eps,.pdf,.png,.jpg}
\else
  \DeclareGraphicsExtensions{.eps}
\fi

\definecolor {uclablue}   {RGB} {39,116,174}
\definecolor {masongreen} {RGB} {0,102,51}

\DeclareMathOperator*{\argmin}{argmin}

\newcommand{\prox}[1]{\mathrm{Prox}_{#1}}

\newcommand{\cZ}[0]{\mathcal{Z}}
\newcommand{\cL}[0]{\mathcal{L}}

\newcommand{\cU}[0]{\mathcal{U}}
\newcommand{\cY}[0]{\mathcal{Y}}
\newcommand{\cX}[0]{\mathcal{X}}
\newcommand{\cE}[0]{\mathcal{E}}

\newcommand{\cG}[0]{\mathcal{G}}

\newcommand{\cT}[0]{\mathcal{T}}
\newcommand{\cV}[0]{\mathcal{V}}
\newcommand{\cW}[0]{\mathcal{W}}

\newcommand{\dv}{{\rm div}}
\newcommand{\norm}[1]{\left\|#1\right\|}
\newcommand{\inner}[2]{\left\langle#1,#2\right\rangle}

\def\R{\mathbb{R}}

\def\cN{\mathcal{N}}

\def\cV{\mathcal{V}}
\def\cM{\mathcal{M}}
\def\cT{\mathcal{T}}
\def\cE{\mathcal{E}}

\newcommand\rowincludegraphics[2][]{\raisebox{-0.45\height}{\includegraphics[#1]{#2}}}

\newcommand{\ared}{\textup{ared}}
\newcommand{\pred}{\textup{pred}}
\newcommand{\cred}{\textup{cred}}

\newcommand{\kgrad}{\kappa_{\textup{der}}}
\newcommand{\krad}{\kappa_{\textup{rad}}}
\newcommand{\kval}{\kappa_{\textup{val}}}
\newcommand{\kfcd}{\kappa_{\textup{fcd}}}
\newcommand{\kadj}{\kappa_{\textup{adj}}}
\newcommand{\kcurv}{\kappa_{\textup{curv}}}

%% file: abstract.tex
We present a trust-region-based adaptive finite-element algorithm for
numerically solving a class of nonsmooth PDE-constrained optimization problems
that includes problems with sparsifying regularizers and convex constraints.
In particular, we consider the class of problems whose objective function is
the sum of a smooth, possibly nonconvex, function and a nonsmooth extended
real-valued convex function.  Our method combines the robustness of inexact
trust-region algorithms for nonsmooth problems with the efficiency of adaptive
finite-element discretizations.  Starting from a coarse mesh, the algorithm
automatically refines the discretization based on reliable \textit{a posteriori}
error estimators for both the state and adjoint equations, systematically
controlling the accuracy of the computed smooth objective function value and
gradient.  This adaptivity mechanism balances computational cost and solution
accuracy, enabling high resolution of localized phenomena and sparsity
structures in the state and control variables. We demonstrate the performance
of our algorithm through numerical experiments on representative control and
topology optimization examples.

%% file: introduction.tex
We consider the optimization problem
\begin{align}
\label{eq:prob}
  \min_{z \in \cZ} \; J(S(z),z) + \phi(z),
\end{align}
where $\cZ$ is a Hilbert space, $\cU$ is a Banach space, the objective function
consists of a smooth function $J:\cU\times\cZ\to\R$ and a proper, closed and
convex---potentially nonsmooth---function $\phi:\cZ \to(-\infty,+\infty]$, and
$u=S(z)\in\cU$ is the solution to a system of partial differential equations
(PDEs), i.e., $c(u,z) = 0\in\cV^*$.  Here, $\cV$ is a reflexive Banach space,
$\cV^*$ its dual space, and $c:\cU\times\cZ\to\cV^*$ represents a weak form of
the PDE, which we assume to be sufficiently regular as to admit derivatives of
the solution map $z\mapsto S(z)$. Commonly $\cV=\cU$ \cite{ulbrich2008book},
but we do not make this assumption.

Algorithms for solving optimization problems constrained by PDEs
\eqref{eq:prob} must balance convergence robustness with the computational cost
of solving the PDE. In applications where solutions may exhibit sharp interfaces
(e.g., topology optimization \cite{MR2208515}), memory limitations
\cite{MAlshehri_HAntil_Eherberg_DPKouri_2024a,antil2025random,muthukumar2021randomized},
or high-dimensional uncertainties
\cite{garreis2017constrained,kouri2013trust,kouri2014inexact,MR3981385,zou2022locally},
solvers must be able to handle inexactness arising
from coarse discretizations or early termination of iterative linear/nonlinear
PDE solvers.
Many optimization algorithms can leverage various forms of inexactness,
such as trust-region \cite{antil2025random,kouri2020matrix,MR3981385}, 
linesearch \cite{dontchev2013newt,shi2005inexact}, and 
sequential quadratic programming (SQP) methods \cite{heinkenschloss2014matrix,heinkenschloss2002sqp,MR3648951,MR3069101,MR2765487}. 
Modern inexact trust-region algorithms build upon the early developments of
Mor\'{e} \cite{more1983recent} and Carter \cite{RGCarter_1991a,RGCarter_1993a},
and control inexactness automatically during optimization. 
Examples include: inexact linear solves \cite{MR4620236}, 
adaptive quadrature \cite{kouri2013trust,kouri2014inexact}, reduced-order
models \cite{fahl2003reduced,MR3981385,zou2022locally},
and randomized compression \cite{antil2025random,muthukumar2021randomized}. 
Similarly, SQP methods can be modified to incorporate inexactness in both
the solution of the quadratic subproblem and objective values and gradients 
\cite{byrd2008sqp,heinkenschloss2014matrix,heinkenschloss2002sqp,MR2765487}. 
In particular, \cite{MR3648951,MR3069101,MR2765487} bear the closest
resemblance to the present work in their algorithmic control of adaptive finite
elements; however, these methods do not handle nonsmoothness aside from convex
control constraints \cite{MR3069101}. Similar analysis was employed by
\cite{bellavia2018lm} for nonlinear least-squares methods with inexact
objective values and gradients under similar assumptions as \cite{MR4620236}. 
Inexactness in step computation for nonsmooth methods has been studied in 
\cite{dontchev2013newt,li2017proxnewt}, but the objective value and gradient 
are exact. We build upon recent advances in nonsmooth trust-region algorithms
\cite{MR4620236} for infinite-dimensional optimization problems
\eqref{eq:prob}.  
Specifically, \cite[Algorithm~1]{MR4620236} demonstrates
rapid convergence \cite{kouri2023localconv}, subproblem solver flexibility
\cite{MR4849972}, and inexactness \cite{MR4620236,antil2025random}. With
respect to the latter, \cite[Algorithm~1]{MR4620236} converges even when the
objective value and gradient are evaluated up to some algorithmically
determined tolerance. We utilize this algorithm to
provide a robust foundation for challenging PDE-constrained optimization
problems via adaptive finite-element discretizations using residual-based error
indicators of the state and adjoint equations. 

Adaptive finite-element methods (AFEM) \cite{MR1885308,rannacher2003afem,MR4793681,MR1284252} provide an
effective mechanism for reducing computational cost while maintaining
sufficient accuracy required for solving the PDE. 
AFEM typically exploits three types of adaptivity (or combinations thereof): 
$(i)$ $h$-adaptivity that refines the mesh \cite{demkowicz2002hp,pardo2010hp}; 
$(ii)$ $p$-adaptivity that increases the polynomial order of the elements \cite{demkowicz2002hp}; and
$(iii)$ $r$-adaptivity that relocates mesh nodes \cite{russell2010move,carstensen2014adapt}.
Guiding the aforementioned types of refinement are various estimators 
that quantify the discretization error
associated with the approximation of the infinite-dimensional space.
The estimator and adaptivity combination
is often tailored to the optimization problem; c.f. \cite{becker2000afem,besier2012qoi,hintermueller2014dw,vexler2008qoi}.
For instance, goal-oriented AFEM aims to perform adaptivity by 
minimizing an error most relevant to a desired quantity of 
interest rather than a global norm, e.g.,
energy norms in the case of Navier-Stokes simulation 
\cite{besier2012qoi} and variational inequalities \cite{hintermueller2014dw}.
A more mechanical approach involves solving a full optimization 
problem, refining, then repeating the process until a particular refinement 
level or error is attained
\cite{becker2004fe,becker2005mesh,vexler2005out,vexler2008qoi};
this technique of optimize-then-refine often lacks rigorous convergence
guarantees. 
In concert with these techniques, 
a variety of {\it a posteriori} error estimators exist that indicate which cells should be refined \cite{rannacher2003afem}.
A common type is the averaging 
ZZ-estimator \cite{feischl2014zz,zhu1987zz}, which was 
derived for higher-order elements in \cite{MR3648951,MR2765487}.
Along with ZZ estimators, residual-based error estimators can be used for online refinement. 
For instance, the aforementioned Ulbrich and Ziems work \cite{MR2765487} proposed an
inexact adaptive SQP method with residual-driven refinement for
nonlinear PDE-constrained optimization. This SQP algorithm was later generalized
for time-dependent PDEs in \cite{MR3648951} and control constraints in \cite{MR3069101}. 
More recently, \cite{MR4122501} develop AFEM for sparse PDE-constrained optimization 
and \cite{MR4893377} performed adaptivity for regularized problems \eqref{eq:prob}.
In our work, we use {\it reliable}, residual-based estimators that involve the
state or adjoint equations.  Reliable estimators provide upper bounds on the
true, infinite-dimensional error and hence indicate where the mesh should be
refined to improve the approximation quality \cite{rannacher2003afem,carstensen2014adapt}.

Our paper is organized in the following way.  In \Cref{sec:model_problem}, we
discuss the problem formulation and illustrate the general concepts behind our
finite-element implementation in \Cref{sec:adaptive}. In \Cref{sec:afemtrust},
we introduce the nonsmooth trust-region algorithm and discuss how it handles
inexactness.  We conclude in \Cref{sec:numerical_examples}, demonstrating the
performance of our approach on numerical examples.

%% file: preliminary.tex
Let $\cW$ and $\cX$ be real Banach spaces and $\cY$ be a real Hilbert space.
We denote by $\cL(\cW,\cX)$ the space of continuous linear operators mapping
$\cW$ into $\cX$. When $\cX=\cW$, we simplify denote $\cL(\cX,\cX)=\cL(\cX)$
and when $\cX=\R$, we denote $\cL(\cW,\R)=\cW^*$ (i.e., the topological dual
space of $\cW$).  We denote the norm on $\cW$ by $\|\cdot\|_{\cW}$ and the
duality pairing between $\cW$ and $\cW^*$ by
$\langle\cdot,\cdot\rangle_{\cW^*,\cW}$.  Moreover, we denote the inner product
on $\cY$ by $\langle\cdot,\cdot\rangle_{\cY}$ and we assume the norm
$\|\cdot\|_{\cY}$ is the usual Hilbert norm.  For the product space $\cW\times\cX$,
we employ the norm 
\[
  \|(w,x)\|_{\cW\times\cX} := \|w\|_{\cW}+\|x\|_{\cX} \quad \mbox{for } (w,x) \in \cW\times\cX.
\]
For any proper, closed and convex function $\psi:\cY\to(-\infty,+\infty]$, we
denote the {\em effective domain} of $\psi$ by
$\text{dom}\,\psi\coloneqq \{y\in\cY~\vert~\psi(y) < +\infty\}$ and the
{\em proximity operator} of $\psi$ by
\begin{align}
  \prox{r\psi}(y)\coloneqq \argmin_{y' \in \cY}\; \left\{\tfrac{1}{2r} \|y'-y\|_{\mathcal{Y}}^2+\psi(y')\right\}, \qquad r>0.
\end{align}
Finally, for a Fr\'echet-differentiable function $h:\cW\to\R$, we denote the
derivative of $h$ at $w\in\cW$ by $h'(w)\in\cW^*$. 

For the convergence analysis of the trust-region method in \cite{MR4620236}
applied to the general optimization problem
\begin{equation}\label{eq:gen}
  \min_{z\in\cZ} \{F(z) \coloneqq f(z) + \phi(z)\}
\end{equation}
we require the following basic assumptions on the problem data
$\cZ$, $f$ and $\phi$.
\begin{assumption}[General Assumptions]
\label{as:prob-data}
The following conditions hold for the data in \eqref{eq:gen}.
\begin{enumerate}
\item $\cZ$ is a Hilbert space and the function $\phi:\cZ\to(-\infty,+\infty]$
      is proper, closed and convex.
\item There exists an open set $\cN_{\cZ}\subseteq\cZ$, containing
      $\text{dom}\,\phi$, on which the function $f:\cZ\to\R$ is Fr\'{e}chet
      differentiable with Lipschitz continuous gradient.  We denote the
      Lipschitz modulus for the gradient by $L_f>0$.
\item The objective function $F=f+\phi$ is bounded below.
\end{enumerate}
\end{assumption}
We recall from \cite[Lemma~1]{MR4620236} 
that any local minimizer $\bar{z}\in\cZ$ for \eqref{eq:gen} 
satisfies
\[
 \bar z = \prox{r\phi}(\bar z-r\nabla f(\bar z))
\]
for any fixed $r>0$ and 
$\nabla f(x) \in \cZ$ the Reisz representation of the derivative
$f'(x)\in\cZ^*$.  With this in mind, we define
the function $\Psi:\cZ\times(0,+\infty)\to[0,+\infty)$ by
\begin{equation}\label{eq:stopCrit}
  \Psi(z,r)\coloneqq \frac{1}{r}\norm{\prox{r\phi}(z-r\nabla f(z))-z}_{\mathcal{Z}}.
\end{equation}
Moreover, we say that $z\in\cZ$ is {\em stationary} or is a
{\em stationary point} for \eqref{eq:gen} if $\Psi(z,r) = 0$ for any fixed
$r>0$. 

Given the particular form of our target optimization problem \eqref{eq:prob},
where $f(z)=J(S(z),z)$ with $S(z)$ the solution to a system of PDEs, we
postulate the following assumptions, which allow us to prove
that \Cref{as:prob-data} holds, and importantly facilitate error estimation based on
the state and adjoint residuals.
\begin{assumption}[Regularity Properties for \eqref{eq:prob}]
\label{as:jac}
The following conditions hold for the data in \eqref{eq:prob}.
\begin{enumerate}
\item $\cZ$ is a Hilbert space, $\cU$ and $\cV$ are Banach spaces with
      $\cV$ reflexive, and $\phi:\cZ\to(-\infty,+\infty]$ is proper, closed and
      convex.
\item The objective function $J:\cU\times\cZ\to\R$ and PDE map
      $c:\cU\times\cZ\to\cV^*$ are continuously Fr\'{e}chet differentiable.
      We denote the partial derivatives of $J$ by $J_u$ and $J_z$,
      and analogously for $c$.
\item There exists open convex sets $\cN_{\cZ}\subseteq\cZ$ and
      $\cN_{\cU}\subseteq\cU$ satisfying $\text{dom}\,\phi\subset\cN_{\cZ}$
      and for all $z\in\cN_{\cZ}$, there exists a unique $u\in\cN_{\cU}$
      satisfying $c(u,z) = 0$.  For fixed $z\in\cN_{\cZ}$, we denote the unique
      solution to $c(u,z) = 0$ by $u=S(z)$ and refer to $S:\cN_{\cZ}\to\cN_{\cU}$ as the
      control-to-state map.
\item The objective function $(u,z)\mapsto J(u,z)+\phi(z)$ is bounded from
      below on $\cN\coloneqq\cN_{\cU}\times\cN_{\cZ}$.
\item There exists a positive constant $\sigma_0>0$ such that the state
      Jacobian $c_u(\bar u,\bar z)\in\cL(\cU,\cV^*)$ satisfies
      \begin{subequations}\label{eq:infsup}
      \begin{align}
        &\sup_{\substack{v\in\cV \\ v \neq 0}}
        \int_0^1\frac{\langle c_u(\bar u + t(\bar{\bar u}-\bar u),\bar z)u,v\rangle_{\cV^*,\cV}}{\|v\|_{\cV}}\,\textup{d}t \ge \sigma_0\|u\|_{\cU} \quad\,\forall\,u\in\cU,\,\bar{\bar{u}}\in\cN_{\cU} \label{eq:infsup1} \\
        &\sup_{\substack{u\in\cU \\ u \neq 0}}
        \frac{\langle c_u(\bar u,\bar z)u,v\rangle_{\cV^*,\cV}}{\|u\|_{\cU}} \ge \sigma_0\|v\|_{\cV} \quad\,\forall\,v\in\cV \label{eq:infsup2}
      \end{align}
      \end{subequations}
      for all $(\bar u,\bar z)\in\cN$.
\item For fixed $u\in\cN_{\cU}$, the PDE map $c(u,\cdot)$ is Lipschitz continuous
      on $\cN_{\cZ}$ and the associated Lipschitz modulus $L_c>0$ is independent
      of $u$.
\item The partial Jacobians $c_u$ and $c_z$, and the partial derivatives of the
      objective function $J_u$ and $J_z$ are Lipschitz continuous on $\cN$.  We
      denote the Lipschitz modulii of these quantities by $L_{c_u}>0$, etc.
\item The partial derivatives $c_u$, $c_z$, and $J_u$ are uniformly bounded on
      $\cN$, i.e., there exists $\sigma_1\ge\sigma_0$ such that 
      \begin{equation}\label{eq:infsup3}
        \|c_u(\bar u,\bar z)\|_{\cL(\cU,\cV^*)} \le \sigma_1,\quad
        \|c_z(\bar u,\bar z)\|_{\cL(\cZ,\cV^*)} \le \sigma_1,\quad
        \|J_u(\bar u,\bar z)\|_{\cZ^*} \le \sigma_1
      \end{equation}
      for all $(\bar u, \bar z)\in\cN$.
\end{enumerate}
\end{assumption}

Some comments are in order regarding \Cref{as:jac}.  Conditions~1,~2~and~4 are
standard, while condition~3 is effectively the result of the implicit function
theorem.  Condition~5 is somewhat non-standard.  For example,
\eqref{eq:infsup1} is stronger than the traditional inf-sup
condition (cf.\ \cite{ern2004theory}).  That is, by setting
$\bar{\bar{u}}=\bar{u}$, we obtain
\begin{equation}\label{eq:infsup1-a}
  \sup_{\substack{v\in\cV \\ v\neq 0}}\;\frac{\langle c_u(\bar{u},\bar{z})u, v\rangle_{\cV^*,\cV}}{\|v\|_{\cV}} \ge \sigma_0\|u\|_{\cU} \quad\forall\, u\in\cU.
\end{equation}
In addition, we require that the inf-sup conditions \eqref{eq:infsup} hold
uniformly on $\cN$.  On the one hand, these conditions ensure that
$c_u(\bar{u},\bar{z})$ is bijective for all $(\bar{u},\bar{z})\in\cN$
\cite[Corollary~A.45]{ern2004theory}.  On the other hand, these conditions will
facilitate the development of error bounds. Of course, if $u\mapsto c(u,z)$ is
linear for fixed $z\in\cN_{\cZ}$, then \eqref{eq:infsup1} simplifies to the
standard inf-sup condition \eqref{eq:infsup1-a} and verifying \eqref{eq:infsup}
can be done using standard techniques. Another common setting where
\eqref{eq:infsup1} is satisfied is when $\cU$ is a Hilbert space, $\cU=\cV$
and $c_u(\bar{u},\bar{z})$ is uniformly coercive on $\cN$, i.e.,
\[
  \langle c_u(\bar{u},\bar{z})u,u \rangle_{\cU^*,\cU} \ge \sigma_0 \|u\|_{\cU}^2 \quad\forall\, u\in\cU
\]
for all $(\bar{u},\bar{z})\in\cN$.  As we will see, the remaining conditions
will facilitate proving the Lipschitz continuity of the gradient of the
reduced objective function $z\mapsto J(S(z),z)$ as well as developing error
bounds for guiding the AFEM discretization during optimization.

By introducing the control-to-state map $S(\cdot)$, we can define the reduced
objective functions $f:\cZ\to\R$ and $F:\cZ\to(-\infty,+\infty]$ by $f(z)\coloneqq J(s(z),z)$
and $F(z)\coloneqq f(z) + \phi(z)$, respectively. Under \Cref{as:jac}, we have
that $f$ is continuously differentiable with derivative given by
\[
  f'(z) = c_z(S(z),z)^*\lambda + J_z(S(z),z),
\]
where $\lambda\in\cV$ solves the adjoint equation
\[
  c_u(S(z),z)^*\lambda + J_u(S(z),z) = 0.
\]
By \Cref{as:jac} and \cite[Remark~2.7]{ern2004theory}, $c_u(S(z),z)^*$ is
bijective and so the adjoint variable $\lambda$ is unique. To simplify the
presentation, we introduce the general adjoint residual operator
$\cG:\cU\times\cZ\times\cV\to\cU^*$ defined by
\begin{align*}
 \cG(u, z, \lambda) \coloneqq J_u(u,z) + c_u(u,z)^* \lambda
\end{align*}
and the general adjoint solution operator $\Lambda:\cU\times\cZ\to\cV$
defined implicitly by
\[
  \cG(u,z,\Lambda(u,z)) = 0.
\]
Moreover, we introduce the function $g:\cU\times\cZ\times\cV\to\cZ^*$ by 
\begin{align*}
  g(u,z,\lambda)\coloneqq c_z(u,z)^*\lambda + J_z(u,z)
\end{align*}
and note that $f'(z) = g(S(z),z,\Lambda(S(z),z))$.

As stated above, we can leverage \Cref{as:jac} to verify \Cref{as:prob-data} as
in the following result.
\begin{proposition}\label{prop:assum}
  If \Cref{as:jac} holds, then \Cref{as:prob-data} holds for \eqref{eq:prob}.
\end{proposition}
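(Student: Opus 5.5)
Conditions~1 and~3 of \Cref{as:prob-data} follow directly. Condition~1 is part of Condition~1 of \Cref{as:jac}. For Condition~3, note that for every $z\in\text{dom}\,\phi\subset\cN_{\cZ}$ we have $(S(z),z)\in\cN$, so $F(z)=J(S(z),z)+\phi(z)$ is bounded below by Condition~4 of \Cref{as:jac}; outside $\text{dom}\,\phi$, $F=+\infty$. All the work is in Condition~2: we take the same open set $\cN_{\cZ}$ and show that $f$ is Fr\'echet differentiable there with Lipschitz gradient.

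\emph{Step 1: differentiability.} By Condition~3 of \Cref{as:jac}, $c(S(z),z)=0$ with $S(z)\in\cN_{\cU}$ unique. Condition~5 makes $c_u(S(z),z)$ bijective, with inverse bounded by $\sigma_0^{-1}$. The implicit function theorem (using Condition~2 and uniqueness to identify the local implicit function with $S$) then gives $S$ continuously Fr\'echet differentiable on $\cN_{\cZ}$, with
\[
S'(z)=-c_u(S(z),z)^{-1}c_z(S(z),z).
\]
The chain rule and the adjoint equation then give $f'(z)=g(S(z),z,\Lambda(S(z),z))$, as stated before the proposition.

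\emph{Step 2: Lipschitz continuity of $S$.} Fix $z_1,z_2\in\cN_{\cZ}$ and set $u_i=S(z_i)$. Since $c(u_1,z_1)=0=c(u_2,z_2)$,
\[
c(u_2,z_1)-c(u_1,z_1)=c(u_2,z_1)-c(u_2,z_2).
\]
By the mean value theorem in integral form, the left side equals $\int_0^1 c_u(u_1+t(u_2-u_1),z_1)(u_2-u_1)\,\textup{d}t$. This is exactly where the strengthened inf-sup condition \eqref{eq:infsup1} is needed (with $\bar u=u_1$, $\bar{\bar u}=u_2$, $\bar z=z_1$); convexity of $\cN_{\cU}$ keeps the segment inside $\cN_{\cU}$. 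Testing against $v$ gives
\[
\sigma_0\|u_2-u_1\|_{\cU}\le \|c(u_2,z_1)-c(u_2,z_2)\|_{\cV^*}\le L_c\|z_1-z_2\|_{\cZ}
\]
by Condition~6. Hence $S$ is Lipschitz with modulus $L_c/\sigma_0$, and so $(S(z),z)$ is Lipschitz in the product norm.

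\emph{Step 3: Lipschitz continuity of $\Lambda(S(\cdot),\cdot)$.} Write $\lambda_i=\Lambda(u_i,z_i)$ and $A_i=c_u(u_i,z_i)$. Subtracting the adjoint equations gives
\[
A_1^*(\lambda_1-\lambda_2)=(A_2-A_1)^*\lambda_2+J_u(u_2,z_2)-J_u(u_1,z_1).
\]
Applying \eqref{eq:infsup2} to the left side yields $\sigma_0\|\lambda_1-\lambda_2\|_{\cV}\le \|\langle A_1(\cdot),\lambda_1-\lambda_2\rangle\|_{\cU^*}$. We need a uniform bound on $\lambda_2$: from the adjoint equation with \eqref{eq:infsup2} and \eqref{eq:infsup3}, $\|\lambda\|_{\cV}\le\sigma_1/\sigma_0$. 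Combining this with the Lipschitz moduli $L_{c_u}$ and $L_{J_u}$ and Step~2 gives a Lipschitz bound for $z\mapsto\lambda(z)$.

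\emph{Step 4: conclusion.} Expand
\[
g(u_1,z_1,\lambda_1)-g(u_2,z_2,\lambda_2)=c_z(u_1,z_1)^*(\lambda_1-\lambda_2)+\bigl(c_z(u_1,z_1)-c_z(u_2,z_2)\bigr)^*\lambda_2+J_z(u_1,z_1)-J_z(u_2,z_2).
\]
Bound the three terms using $\sigma_1$, $L_{c_z}$, $L_{J_z}$, the bound $\|\lambda_2\|_{\cV}\le\sigma_1/\sigma_0$, and Steps~2--3. Passing to the Riesz representation preserves norms, so $\nabla f$ is Lipschitz, with explicit $L_f$ depending on $\sigma_0,\sigma_1,L_c$ and the Lipschitz moduli.

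The main obstacle is Step~2, obtaining Lipschitz continuity of $S$ globally on $\cN_{\cZ}$ rather than locally. That is precisely why \eqref{eq:infsup1} is stated with the integrated Jacobian along segments, together with convexity of $\cN_{\cU}$. Some care is also needed in Step~1 so that the implicit-function branch coincides with $S$; uniqueness in Condition~3 handles this.
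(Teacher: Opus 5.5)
Your proposal is correct and follows essentially the same route as the paper. It uses the integral mean-value identity with \eqref{eq:infsup1} and the uniform Lipschitz bound on $c(u,\cdot)$ for the state, subtracts the adjoint equations and applies \eqref{eq:infsup2} with the uniform bound $\|\lambda\|_{\cV}\le\sigma_1/\sigma_0$ for the adjoint, and uses the same three-term splitting of $f'(z_1)-f'(z_2)$; the only addition is your implicit-function-theorem justification of differentiability of $S$ and $f$, which the paper simply asserts in the text preceding the proposition.
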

\begin{proof}
  Note that \Cref{as:prob-data}.1 holds by \Cref{as:jac}.1 and that
  \Cref{as:prob-data}.3 holds by \Cref{as:jac}.4.  We now prove
  \Cref{as:prob-data}.2.  Let $z,\,\bar{z}\in\cN_{\cZ}$ be arbitrary and
  define $u=S(z)$, $\bar u=S(\bar z)$, $\lambda = \Lambda(u,z)$ and
  $\bar\lambda=\Lambda(\bar u,\bar z)$.  We first bound $\|u-\bar u\|_{\cU}$
  and $\|\lambda-\bar\lambda\|_{\cV}$.  For the state bound, \eqref{eq:infsup1}
  ensures that
  \begin{align}
    \langle c(\bar u,z), v\rangle_{\cV^*,\cV} &= \langle c(\bar u, z) - c(u,z),v\rangle_{\cV^*,\cV} \nonumber \\
    &= \int_0^1 \langle c_u(u + t(\bar u-u),z)(\bar u - u), v\rangle_{\cV^*,\cV}\,\textup{d}t \label{eq:res-int}
  \end{align}
  for all $v\in\cV$.  Here, we have applied the mean-value theorem to
  $w\mapsto \langle c(w,z), v\rangle_{\cV^*,\cV}$ using, e.g.,
  \cite[Theorem~4.2]{lang2005undergraduate} and \cite[Theorem~3.7.12]{hille1957functional}.
  Dividing by $\|v\|_{\cV}$, passing to the supremum over $\cV\ni v\neq 0$ and
  applying \eqref{eq:infsup1} yields
  \begin{equation}\label{eq:state-res-bnd}
    \|c(\bar u,z)\|_{\cV} \ge \sigma_0 \|\bar u - u\|_{\cU}.
  \end{equation}
  In addition, \Cref{as:jac}.6 ensures that
  \[
    \|c(\bar u, z)\|_{\cY^*} = \|c(\bar u, z) - c(\bar u, \bar z)\|_{\cY^*} \le L_c \|z-\bar z\|_{\cZ}.
  \]
  Combining these bounds yields
  \[
    \|\bar u - u\|_{\cU} \le \sigma_0^{-1} L_c \|\bar z-z\|_{\cZ}.
  \]
  We employ similar arguments for the adjoint bound.  In particular, employing
  the adjoint equation, we have that
  \[
    \|c_u(\bar u,\bar z)^*(\lambda-\bar\lambda)\|_{\cU^*} \le \|c_u(\bar u,\bar z)-c_u(u,z)\|_{\cL(\cU,\cV^*)}\|\lambda\|_{\cV} + \|J_u(\bar u, \bar z)-J_u(u,z)\|_{\cU^*}.
  \]
  Applying \eqref{eq:infsup2} and \Cref{as:jac}.7 yields
  \[
    \begin{aligned}
    \|\lambda-\bar\lambda\|_{\cV}&\le \sigma_0^{-1} (L_{c_u}\|\lambda\|_{\cV}+L_{J_u})\|(\bar u,\bar z)-(u,z)\|_{\cU\times\cZ} \\
    &\le \sigma_0^{-1} (L_{c_u}\|\lambda\|_{\cV}+L_{J_u})(1+\sigma_0^{-1}L_c)\|\bar z-z\|_{\cZ}.
    \end{aligned}
  \]
  Finally, \eqref{eq:infsup2} ensures that 
  \[
    \sigma_0\|\lambda\|_{\cV} \le \|c_u(\bar u,\bar z)^*\lambda\|_{\cU^*} = \|J_u(\bar u,\bar z)\|_{\cU^*},
  \]
  where the right-hand side is bounded by \Cref{as:jac}.8.  Consequently, there
  exists constants $C_u>0$ and $C_\lambda>0$ such that
  \begin{equation}\label{eq:state-adjoint-bounds}
    \|u-\bar u\|_{\cU}\le C_u\|z-\bar z\|_{\cZ}
    \qquad\text{and}\qquad
    \|\lambda-\bar\lambda\|_{\cV} \le C_\lambda \|z-\bar z\|_{\cZ}.
  \end{equation}
  Using \eqref{eq:state-adjoint-bounds}, we can bound the difference between
  the derivatives $f'(z)$ and $f'(\bar z)$.  In particular, we have that
  \[
    \begin{aligned}
    \|f'(z)-f'(\bar z)\|_{\cZ^*} \le\; &\|(c_z(u,z)-c_z(\bar u,\bar z))^*\lambda\|_{\cZ^*}
    + \|c_z(\bar u,\bar z)^*(\lambda-\bar\lambda)\|_{\cZ^*} \\
    &+ \|J_z(u,z)-J_z(\bar u,\bar z)\|_{\cZ^*}.
    \end{aligned}
  \]
  Here, note that the first term on the right-hand side is bounded by
  \[
    \|(c_z(u,z)-c_z(\bar u,\bar z))^*\lambda\|_{\cZ^*} \le L_{c_z}\|(u,z)-(\bar u,\bar z)\|_{\cU\times\cZ}\|\lambda\|_{\cV}
  \]
  since $c_z$ is Lipschitz continuous on $\cN$. Similarly, the third term is
  bounded by
  \[
    \|J_z(u,z)-J_z(\bar u,\bar z)\|_{\cZ^*}\le L_{J_z} \|(u,z)-(\bar{u},\bar{z})\|_{\cU\times\cZ}
  \]
  since $J_z$ is Lipschitz continuous on $\cN$.  In contrast, the second term
  is bounded by
  \[
    \|c_z(\bar u,\bar z)^*(\lambda-\bar\lambda)\|_{\cZ^*} \le \sigma_1\|\lambda-\bar\lambda\|_{\cV}.
  \]
  Combining these bounds with \eqref{eq:state-adjoint-bounds} yields the
  desired result.
\end{proof}

The value and gradient of $f$ are critical components for most modern
optimization algorithms and computing these values inexactly can derail or
even halt the progress of the algorithm.  In this setting, trust-region
methods are ideal as they can robustly handle inexact evaluations of
$f$ and $f'$, while maintaining strong convergence guarantees
\cite{Baraldi2025}. Trust-region methods ensure convergence by providing
tolerances for inexact computations that depend on the progress of the
optimization.  In the context of \eqref{eq:prob}, we will use adaptive
mesh refinement with residual-based error indicators to achieve these
tolerances by levaraging the error bounds provided by the following
proposition.
\begin{proposition} \label{prop1}
  Suppose \Cref{as:jac} holds and let $z\in\cN_{\cZ}$, $u=S(z)$,
  $\lambda=\Lambda(u,z)$, $\bar u\in\cN_{\cU}$ and $\bar\lambda\in\cV$.  Then,
  there exists $\alpha_i>0$, $i=0,1,\ldots,6$, such that the following state
  and adjoint bounds hold
  \begin{subequations}\label{ercnt}
  \begin{align}
    \alpha_0 \norm{u-\bar u}_{\cU} &\le \norm{c(\bar u,z)}_{\cV^*} \le \alpha_1\norm{u-\bar u}_{\cU} \label{ercnt-state} \\
    \norm{\lambda-\bar\lambda}_{\cV} &\le \alpha_2(1+\norm{\bar\lambda}_{\cV})\norm{c(\bar u,z)}_{\cV^*} + \alpha_3\norm{\cG(\bar u,z,\bar\lambda)}_{\cU^*}, \label{ercnt-adjoint}\\
    \intertext{as well as the objective function value and gradient bounds}
    |J(\bar u,z)-f(z)| &\le \alpha_4\norm{c(\bar u, z)}_{\cV^*} \label{ercnt-val} \\
    \norm{g(\bar u,z,\bar\lambda)-f'(z)}_{\cZ^*} &\le \alpha_5(1+\norm{\bar\lambda}_{\cV}) \norm{c(\bar u,z)}_{\cV^*} + \alpha_6 \norm{\cG(\bar u,z,\bar\lambda)}_{\cU^*}. \label{ercnt-grad}
  \end{align}
  \end{subequations}
\end{proposition}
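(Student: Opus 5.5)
We prove the four bounds in the order stated, since each uses the ones before it. Throughout we reuse the argument from the proof of \Cref{prop:assum}.

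\textbf{State bound \eqref{ercnt-state}.} Since $c(u,z)=0$, the mean-value identity \eqref{eq:res-int} gives
\[
  \langle c(\bar u,z),v\rangle_{\cV^*,\cV}=\int_0^1\langle c_u(u+t(\bar u-u),z)(\bar u-u),v\rangle_{\cV^*,\cV}\,\textup{d}t
\]
for all $v\in\cV$. Dividing by $\|v\|_{\cV}$, taking the supremum and applying \eqref{eq:infsup1} with base point $(u,z)$ and $\bar{\bar u}=\bar u\in\cN_{\cU}$ yields the lower bound with $\alpha_0=\sigma_0$. For the upper bound, we estimate the integrand by the uniform bound \eqref{eq:infsup3}. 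The segment between $u$ and $\bar u$ stays in $\cN_{\cU}$ by convexity, so $\alpha_1=\sigma_1$.

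\textbf{Adjoint bound \eqref{ercnt-adjoint}.} We split $\lambda-\bar\lambda=(\lambda-\hat\lambda)+(\hat\lambda-\bar\lambda)$, where $\hat\lambda=\Lambda(\bar u,z)$ is the exact adjoint at the approximate state.

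For the second piece, $\cG(\bar u,z,\bar\lambda)=c_u(\bar u,z)^*(\bar\lambda-\hat\lambda)$. Then \eqref{eq:infsup2} gives $\|\hat\lambda-\bar\lambda\|_{\cV}\le\sigma_0^{-1}\|\cG(\bar u,z,\bar\lambda)\|_{\cU^*}$.

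For the first piece, we argue as in \Cref{prop:assum}:
\[
  c_u(\bar u,z)^*(\lambda-\hat\lambda)=(c_u(\bar u,z)-c_u(u,z))^*\lambda+J_u(\bar u,z)-J_u(u,z).
\]
This gives $\|\lambda-\hat\lambda\|_{\cV}\le\sigma_0^{-1}(L_{c_u}\|\lambda\|_{\cV}+L_{J_u})\|u-\bar u\|_{\cU}$. We then use $\|\lambda\|_{\cV}\le\sigma_0^{-1}\sigma_1$ and the lower state bound $\|u-\bar u\|_{\cU}\le\alpha_0^{-1}\|c(\bar u,z)\|_{\cV^*}$. The resulting constant does not involve $\|\bar\lambda\|_{\cV}$, so it is trivially dominated by $\alpha_2(1+\|\bar\lambda\|_{\cV})$.

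The factor $(1+\|\bar\lambda\|_{\cV})$ in the statement suggests an alternative route: linearize around $\bar\lambda$ instead, writing $(c_u(\bar u,z)-c_u(u,z))^*\bar\lambda$. Either route works. We take the one above and note that the stated form follows.

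\textbf{Value bound \eqref{ercnt-val}.} By the mean-value theorem, $J(\bar u,z)-J(u,z)=\int_0^1 J_u(u+t(\bar u-u),z)(\bar u-u)\,\textup{d}t$. Hence $|J(\bar u,z)-f(z)|\le\sigma_1\|\bar u-u\|_{\cU}\le\sigma_1\sigma_0^{-1}\|c(\bar u,z)\|_{\cV^*}$, using \eqref{eq:infsup3} and the state bound.

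\textbf{Gradient bound \eqref{ercnt-grad}.} We write $g(\bar u,z,\bar\lambda)-f'(z)$ as the sum of three terms:
\[
  (c_z(\bar u,z)-c_z(u,z))^*\bar\lambda,\qquad c_z(u,z)^*(\bar\lambda-\lambda),\qquad J_z(\bar u,z)-J_z(u,z).
\]
These are bounded respectively by $L_{c_z}\|\bar\lambda\|_{\cV}\|u-\bar u\|_{\cU}$, $\sigma_1\|\lambda-\bar\lambda\|_{\cV}$ and $L_{J_z}\|u-\bar u\|_{\cU}$. Substituting the state bound and \eqref{ercnt-adjoint} gives \eqref{ercnt-grad} with $\alpha_5=\sigma_0^{-1}\max(L_{c_z},L_{J_z})+\sigma_1\alpha_2$ and $\alpha_6=\sigma_1\alpha_3$. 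Here it is natural to keep $\bar\lambda$ rather than $\lambda$ in the first term, which is where the factor $(1+\|\bar\lambda\|_{\cV})$ originates.

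\textbf{Main obstacle.} The delicate point is applicability of the assumptions along segments. The Lipschitz and boundedness constants of \Cref{as:jac}.7--8 hold only on $\cN$, so the points $(\bar u,z)$ and the segment between $u$ and $\bar u$ must lie in $\cN$. This is guaranteed because $\bar u\in\cN_{\cU}$, $z\in\cN_{\cZ}$, and $\cN_{\cU}$ is convex. Similarly, invoking \eqref{eq:infsup2} at $(\bar u,z)$ requires $(\bar u,z)\in\cN$, which holds by the same reasoning.
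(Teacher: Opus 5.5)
Your proof is correct. The state and value bounds follow the paper's argument with the same constants, but the adjoint and gradient steps take a different route.

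The paper applies \eqref{eq:infsup2} at the exact point $(u,z)$ to $\cG(u,z,\bar\lambda)=c_u(u,z)^*(\bar\lambda-\lambda)$ and then shifts the state argument from $u$ to $\bar u$. The perturbation $(c_u(u,z)-c_u(\bar u,z))^*\bar\lambda$ is what creates the factor $(1+\norm{\bar\lambda}_{\cV})$.

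You instead apply \eqref{eq:infsup2} at the approximate point $(\bar u,z)$, via $\hat\lambda=\Lambda(\bar u,z)$. Its existence follows from \eqref{eq:infsup} holding at $(\bar u,z)\in\cN$. You could also bypass $\hat\lambda$ by writing $c_u(\bar u,z)^*(\lambda-\bar\lambda)=(c_u(\bar u,z)-c_u(u,z))^*\lambda+J_u(\bar u,z)-J_u(u,z)-\cG(\bar u,z,\bar\lambda)$. Either way, your perturbation term carries the exact adjoint $\lambda$, which is a priori bounded by $\sigma_1/\sigma_0$. So your adjoint estimate is uniform in $\bar\lambda$, and the stated form follows trivially.

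In the gradient step the roles are reversed. The paper keeps $\lambda$ in the $c_z$-difference and uses its a priori bound, while you keep $\bar\lambda$ there. That choice is valid but unnecessary. Combining your adjoint estimate with the paper's splitting $c_z(\bar u,z)^*(\bar\lambda-\lambda)+(c_z(\bar u,z)-c_z(u,z))^*\lambda+J_z(\bar u,z)-J_z(u,z)$ gives \eqref{ercnt-grad} with no dependence on $\norm{\bar\lambda}_{\cV}$ at all. Your route therefore buys something the paper's does not: it would remove the need, in verifying \Cref{as:grad} for \Cref{th:conv}, to assume the AFEM adjoints are bounded by $\kadj$.
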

\begin{proof}
  The proof of this result is similar to the proof of
  \cite[Proposition~A.2]{MR3981385}. We first bound the difference between $u$
  and $\bar u$.  To this end, the arguments in the proof of \Cref{prop:assum}
  apply here to yield \eqref{eq:state-res-bnd}. This and \Cref{as:jac}.8
  applied to \eqref{eq:res-int} yield \eqref{ercnt-state} with
  $\alpha_0=\sigma_0$ and $\alpha_1=\sigma_1$. Next we bound the difference
  between $\lambda$ and $\bar\lambda$. By the linearity of the adjoint
  residual, we have that
  \[
    \|\cG(u, z,\bar\lambda)\|_{\cU^*} = \|\cG(u, z,\bar\lambda)-\cG(u, z, \lambda)\|_{\cU^*}
      = \|c_u(u, z)^*(\bar\lambda-\lambda)\|_{\cU^*}
  \]
  and therefore \Cref{as:jac}.5~and~\ref{as:jac}.8 yield
  \[
    \sigma_0 \norm{\lambda - \bar \lambda}_{\cV} \le \norm{\cG(u, z, \bar\lambda)}_{\cU^*} 
    \le \sigma_1\norm{\lambda - \bar \lambda}_{\cV}.
  \]
  The bound \eqref{ercnt-val} follows from \Cref{as:jac}.8 and
  \eqref{ercnt-state}.  In particular, $J(\cdot,z)$ is Lipschitz continuous on
  $\cN_{\cU}$ since $J_u$ is uniformly bounded on $\cN_{\cU}$. Now, consider 
  \begin{align*}
    \norm{\cG(u, z, \bar\lambda)}_{\cU^*}
    & \le \norm{\cG(\bar u, z, \bar\lambda)}_{\cU^*} + \norm{\cG(u, z, \bar\lambda) - \cG(\bar u, z, \bar\lambda)}_{\cU^*} \\
    & \le \norm{\cG(\bar u, z, \bar\lambda)}_{\cU^*} + \norm{(c_u(u,z) - c_u(\bar u, z))^*\bar\lambda}_{\cU^*} \\
    & \quad \quad \quad + \norm{J_u(u,z) - J_u(\bar u, z)}_{\cU^*}\\
    & \le \norm{\cG(\bar u, z, \bar\lambda)}_{\cU^*} + (L_{c_u}\norm{\bar\lambda}_{\cV} + L_{J_u})\norm{u - \bar u}_{\cU}.
  \end{align*}
  Combining these two bounds yields \eqref{ercnt-adjoint}. Finally, we have
  that 
  \begin{align*}
    \norm{g(\bar u, z, \bar\lambda) - f'(z)}_{\cZ^*}
    & \le \norm{g(\bar u,z,\bar\lambda) - g(\bar u, z, \lambda)}_{\cZ^*} + \norm{g(\bar u, z, \lambda) - g(u, z, \lambda)}_{\cZ^*}\\
    & = \norm{J_z(\bar u, z) - J_z(u, z) + (c_z(\bar u,z)-c_z(u, z))^*\lambda}_{\cZ^*} \\
    &\quad \quad \quad + \norm{c_z(u, z)^*(\bar\lambda - \lambda)}_{\cZ^*} \\
    & \le (L_{J_z}+L_{c_z}\norm{\lambda}_{\cV})\norm{u - \bar u}_{\cU} + \sigma_1 \norm{\lambda - \bar\lambda}_{\cV}.
  \end{align*}
  Recall that $\|\lambda\|_{\cV}$ is bounded independent of $z$ using
  \Cref{as:jac}.8 and \eqref{eq:infsup2}.  Consequently, \eqref{ercnt-grad}
  follows from \eqref{ercnt-state} and \eqref{ercnt-adjoint}.
\end{proof}

%% file: adaptivefem.tex
Let $\Omega\subset\R^d$, $d=1,2,3$, be an open, connected set with boundary
$\partial\Omega$.
For \eqref{eq:prob}, the Banach spaces $\cU$ and $\cV$ consists of measurable
functions defined on $\Omega$. Let $\cT^h$ be a partition of the domain
$\Omega$ into regular triangles such that $\bar{\Omega}=\bigcup_{T\in\cT^h}T$.
The parameter $h$ is the cell-wise constant function defined by
$h\vert_T=h_T=\textup{diam}(T)$. Moreover, let $\cU^h \subset \cU$ and
$\cV^h\subset\cV$ be finite-dimensional subspaces of cell-wise defined
functions on the mesh $\cT^h$.  Moreover let $\cE^h$ denote the set of edges of
$\cT^h$. Let $\cE_\Omega^h\coloneqq \cE^h \backslash \{ E \in \cE^h~\vert~E\subset \partial \Omega\}$
denote the set of interior edges on domain $\Omega$.
Additionally, we denote the finite-element state and adjoint
approximations by $u^h\in\cU^h$ and $\lambda^h\in\cV^h$, respectively.  When
the optimization variable $z\in\cZ$ is defined on $\Omega$, we will leverage
nested, finite-dimensional (e.g., piecewise constant) approximations to ensure
convergence to an infinite-dimensional solution.  As such, we will not consider 
discretizations of $z$ within the AFEM error analysis.

We denote the discretized PDE constraint by $c^h:\cU^h\times\cZ\to(\cV^h)^*$,
which is given by the finite-element discretization
\[
  \inner{c^h(u^h, z)}{v^h}_{(\cV^h)^*,\cV^h} \coloneqq \inner{c(u^h, z)}{v^h}_{\cV^*,\cV}
\]
for all $u^h\in\cU^h$ and $v^h\in\cV^h$. Additionally, we define $g^h$, $S^h$,
and $\Lambda^h$ to be the finite-element approximations of $g$, $S$, and
$\Lambda$, respectively.  Given $z\in\cZ$, let $u^h = S^h(z)$ and
$\lambda^h = \Lambda^h(S^h(z), z)$.  We approximate $f'(z)$ by
\[
  g^h(z) \coloneqq g(u^h, z, \lambda^h) \approx f'(z) = g(S(z), z, \lambda(S(z), z)).
\]
\sloppy
According to \Cref{prop1}, we can bound the error between $g^h(z)$ and $f'(z)$
by $\|c(u^h,z)\|_{\cV^*}$ and $\|\cG(u^h,z,\lambda^h)\|_{\cU^*}$. However,
these infinite-dimensional residual norms are not directly computable. To
permit inexact state and adjoint solves as in \cite{MR3069101} (i.e., using
iterative linear/nonlinear solvers), we instead bound the state residual
norm as
\begin{align*}
  \norm{c(S^h(z), z)}_{\cV^*} & = \norm{c(S^h(z), z) - c^h(S^h(z),z) + c^h(S^h(z),z)}_{\cV^*} \\
  & \le \norm{c(S^h(z), z) - c^h(S^h(z),z)}_{\cV^*} + \norm{c^h(S^h(z), z)}_{(\cV^h)^*},
\end{align*}
and likewise for the adjoint residual. To further bound these quantities, we
assume that there exist \textit{reliable error estimators} $\xi_c^h$ and $\xi_\cG^h$ for
the state and adjoint, respectively, such that $\xi_{c}^h\to0$ and
$\xi_\cG^h\to0$ as $h\to0$ for arbitrary fixed $z\in\cN_{\cZ}$.  We can then
employ these estimators to produce the bounds
\begin{subequations}\label{eq:errest}
\begin{align}
  \norm{c(S^h(z),z)}_{\cV^*}
  & \leq C_1 \kappa_{c_1}\xi_{c}^h + \kappa_{c_2}\norm{c^h(S^h(z),z)}_{(\cV^h)^*} \label{est3} \\
  \norm{\cG(S^h(z),z,\lambda^h)}_{\cU^*} 
  & \leq C_2  \kappa_{\cG_1}\xi_{\cG}^h + \kappa_{\cG_2}\norm{\cG^h(S^h(z), z, \lambda^h)}_{(\cU^h)^*},  \label{est1}
\end{align}
\end{subequations}
with finite, unknown constants $\kappa_{c_i}>0$ and $\kappa_{\cG_i} > 0$ and positive constants $C_i >0 $ for $i=1,2$, independent of mesh parameter $h$.
Through adaptive refinement, we generate a hierarchy of meshes with parameter
$h_k$ at each optimization iteration $k$.  We denote the mesh parameter at each
sub-iteration within the refinement by $h_{k,\ell}$, which we then test against
the value $\tau^{\text{val}}_k>0$ and derivative $\tau^{\text{der}}_k$
tolerances provided by the trust-region algorithm. In particular, at the $k$-th
trust-region iteration, we refine the mesh as well as the state and adjoint
approximations to satisfy
\begin{subequations}
\begin{equation}
\label{eq:error_crit_state}
  \kappa_{c_1} \xi_c^{h_{k,\ell}} + \kappa_{c_2}\norm{c^{h_{k,\ell}}(u^{h_{k,\ell}}, z)}_{(\cV^{h_{k,\ell}})^*} \le \tau^{\text{val}}_k, 
\end{equation}
for the objective value at current $z=z_k$ and trial $z=z_k^+$
iterates, and for the derivative
\begin{align}
  \xi_c^{h_{k,\ell}} + \xi_\cG^{h_{k,\ell}} 
  + \norm{\cG^{h_{k,\ell}}(u^{h_{k,\ell}}, z_k, \lambda^{h_{k,\ell}})}_{(\cU^{h_{k,\ell}})^*}&  \nonumber \\
  + \norm{c^{h_{k,\ell}}(u^{h_{k,\ell}},z_k)}_{(\cV^{h_{k,\ell}})^*}
  & \le  \tau^{\text{der}}_k. \label{eq:error_crit_tot}
\end{align}
\end{subequations}
We summarize our mesh refinement procedure for evaluating the objective
function $f(z)$ and its derivative in \Cref{alg:afem-val,alg:afem-grad}. 
\begin{algorithm}[!ht]
\caption{AFEM Objective Function Evaluation}
\label{alg:afem-val}
\begin{algorithmic}[1]
\REQUIRE{Initial mesh $\cT^{h_{k,0}}$ with parameter $h_{k,0}\gets h_k$,
	 iterates $z_k$ and $z_k^+$, 
	 tolerances $\tau_{\max}^{\textup{val}}, \tau_k^{\textup{val}}>0$, 
	 and refinement fraction $\theta\in(0,1)$.}
\FOR{$\ell = 0,1,2,\dots$}
  \STATE{\textbf{Solve:} On $\cT^{h_{k,\ell}}$,
	       compute $u^{h_{k,\ell}}\approx S(z_k)$
         and $u_+^{h_{k,\ell}}\approx S(z_k^+)$ satisfying
         \[
           \begin{aligned}
           \norm{c^{h_{k,\ell}}(u^{h_{k,\ell}}, z_k)}_{(\cV^{h_{k,\ell}})^*}
             &\le \min\{\tau_{\max}^{\textup{val}},\tau_k^{\textup{val}}\} \\
           \norm{c^{h_{k,\ell}}(u_+^{h_{k,\ell}}, z_k^+)}_{(\cV^{h_{k,\ell}})^*}
             &\le \min\{\tau_{\max}^{\textup{val}},\tau_k^{\textup{val}}\}
           \end{aligned}
         \]
         }
  \STATE{\textbf{Estimate:} Evaluate 
	       $\xi_{c}^{h_{k,\ell}}$ and $\xi_{c,+}^{h_{k,\ell}}$ 
				 at $z_k$ and $z_k^+$, respectively.}
  \STATE{\textbf{Check:} If 
    $\xi_c^{h_{k,\ell}} \le \min\{\tau_{\max}^{\textup{val}},\tau_k^{\textup{val}}\}$ and  
    $ \xi_{c,+}^{h_{k,\ell}} \le \min\{\tau_{\max}^{\textup{val}},\tau_k^{\textup{val}}\}$, 
    {\bf break}.}
  \STATE{\textbf{Mark:} Select minimal subset
         $\cM_\ell\subset\cT^{h_{k,\ell}}$ for refinement via the 
         D\"orfler (bulk-chasing) strategy \cite{MR1393904}:
         \begin{align*}
           \sum_{T\in \cM_\ell} \xi_T^2 
           \;\ge\; \theta \sum_{T\in \cT^{h_{k,\ell}}} \xi_T^2,
         \end{align*}
	 where $\xi_T$ denotes the local element indicator on $T$.}
  \STATE{\textbf{Refine:} Refine marked elements $\cM_\ell$ (e.g.,
         using newest-vertex bisection \cite{MR1284252}) to obtain a
	 conforming mesh $\cT^{h_{k,\ell+1}}$.}
\ENDFOR
\RETURN{Approximate objective function evaluations at $z_k$ and $z_k^+$ via
  \[
    F_k(z_k) = J(u^{h_{k,\ell}},z_k) + \phi(z_k) \qquad\text{and}\qquad F_k(z_k^+)= J(u_+^{h_{k,\ell}},z_k^+)+\phi(z_k^+).
  \]}
\end{algorithmic}
\end{algorithm}
In the subsequent section, it will become clear why two objective function
values are approximated by \Cref{alg:afem-val}.  These values, $z_k$ and
$z_k^+$, coincide with the current and trial iterates generated by our
trust-region method.  To ensure sufficient decrease, we require that our
approximation of the difference of the objective function at these values is
sufficiently accurate.
\begin{algorithm}[!ht]
\caption{AFEM Derivative Evaluation}
\label{alg:afem-grad}
\begin{algorithmic}[1]
\REQUIRE{Initial mesh $\cT^{h_{k,0}}$ with parameter $h_{k,0}\gets h_k$,
	 $z_k$, 
	 tolerances $\tau_{\max}^{\textup{der}}, \tau_k^{\textup{der}}>0$, 
	 and refinement fraction $\theta\in(0,1)$.}
\FOR{$\ell = 0,1,2,\dots$}
  \STATE{\textbf{Solve:} On $\cT^{h_{k,\ell}}$, compute 
	       $u^{h_{k,\ell}}\approx S(z_k)$ and
         $\lambda^{h_{k,\ell}}\approx\Lambda(S(z_k),z_k)$ 
         satisfying
         \[
           \begin{aligned}
           \norm{c^{h_{k,\ell}}(u^{h_{k,\ell}}, z_k)}_{(\cV^{h_{k,\ell}})^*}
             &\le \min\{\tau_{\max}^{\textup{der}},\tau_k^{\textup{der}}\} \\
           \norm{\cG^{h_{k,\ell}}(u^{h_{k,\ell}}, z_k, \lambda^{h_{k,\ell}})}_{(\cU^{h_{k,\ell}})^*}
             &\le \min\{\tau_{\max}^{\textup{der}},\tau_k^{\textup{der}}\}. 
           \end{aligned}
         \]
         }
  \STATE{\textbf{Estimate:} Evaluate 
	$\xi_{c}^{h_{k,\ell}}$ and $\xi_{\cG}^{h_{k,\ell}}$ for the
	 state and adjoint at $z_k$, respectively.}
  \STATE{\textbf{Check:} If  
    $
      \xi_c^{h_{k,\ell}}+\xi_{\cG}^{h_{k,\ell}}\le\min\{\tau_{\max}^{\textup{der}},\tau_k^{\textup{der}}\},
    $ 
    {\bf break}.}
  \STATE{\textbf{Mark:} Select minimal subset
         $\cM_\ell\subset\cT^{h_{k,\ell}}$ for refinement via the 
         D\"orfler (bulk-chasing) strategy:
         \begin{align*}
           \sum_{T\in \cM_\ell} \xi_T^2 
           \;\ge\; \theta \sum_{T\in \cT^{h_{k,\ell}}} \xi_T^2,
         \end{align*}
	 where $\xi_T$ is the combined state and adjoint local element indicator on $T$.}
  \STATE{\textbf{Refine:} Refine marked elements $\cM_\ell$ (e.g.,
         using newest-vertex bisection) to obtain a
         conforming mesh $\cT^{h_{k,\ell+1}}$.}
\ENDFOR
\RETURN{Derivative approximation $g_k=g(u^{h_{k,\ell}},z_k,\lambda^{h_{k,\ell}})$ and the total error estimator
  \[
     \xi=\xi_c^{h_{k,\ell}}+\xi_{\cG}^{h_{k,\ell}}
	+\norm{c^{h_{k,\ell}}(u^{h_{k,\ell}}, z_k)}_{(\cV^{h_{k,\ell}})^*}
	+\norm{\cG^{h_{k,\ell}}(u^{h_{k,\ell}}, z_k, \lambda^{h_{k,\ell}})}_{(\cU^{h_{k,\ell}})^*}.
  \]}
\end{algorithmic}
\end{algorithm}
Notice that \Cref{alg:afem-val,alg:afem-grad} use the same refinement
mechanisms (i.e., bulk chasing and newest-vertex bisection).  The primary
difference between the two being that \Cref{alg:afem-val} only refines the
state approximation, while \Cref{alg:afem-grad} refines both the state and
adjoint approximations.

%% file: adaptivetrust.tex
At the $k$-th iteration, classical trust-region methods compute a trial iterate
$z_k^+$ by approximately minimizing a local model of the objective function
around the current iterate $z_k$ within the current trust region, i.e., the
ball of radius $\Delta_k>0$ centered at $z_k$.  To handle nonsmooth objective
functions with the form of $F$, \cite{MR4620236} employs the model
$m_k(z)\coloneqq f_k(z)+\phi(z)$, where $f_k:\cZ\to\R$ is a smooth
approximation of $f$ around $z_k$.  For computational convenience, we
utilize quadratic models of the form
\[
  f_k(z) \coloneqq \frac12\inner{B_k(z - z_k)}{z - z_k}_{\cZ^*,\cZ} + \inner{g_k}{z - z_k}_{\cZ^*,\cZ},
\]
where $B_k = B_k^*\in\cL(\cZ,\cZ^*)$ models curvature information (i.e., the
Hessian or a secant approximation thereof) and $g_k \approx f'(z_k)$.
As in \cite[Algorithm~1]{MR4620236}, we compute trial iterates $z_k^+$ that
approximately solve the subproblem
\begin{align}
  \min_{z \in \cZ}\;m_k(z) \quad \text{subject to} \quad \norm{z-z_k}_{\mathcal{Z}} \le \Delta_k, \label{problem1}
\end{align}
using the proximal methods introduced in \cite{MR4849972}.
In particular, we require that $z_k^+$ satisifies the trust-region constraint
(up to a constant)
\begin{align*} 
  \norm{z^+-z_k}_{\cZ} \le \krad \Delta_k
\end{align*} 
and the fraction of Cauchy decrease condition
\begin{align} \label{cdc}
  m_k(z_k)- m_k(z_k^+) \ge \kfcd \Psi_k \min \left \{\Delta_k, \frac{\Psi_k}{1+\|B_k\|_{\cL(\cZ,\cZ^*)}}\right \},
\end{align}
where $\Psi_k$, defined as
\begin{equation}\label{eq:psik}
  \Psi_k\coloneqq t_k^{-1}\norm{\prox{t_k\phi}(z_k-t_k \nabla f_k(z_k))-z_k}_{\cZ},
\end{equation}
indicates the stationarity of the iterate $z_k$ for the model $m_k$.
Here, $\krad>0$ and $\kfcd>0$ are constants independent of $k$.  In our
numerical examples, we choose the parameter $t_k$ to be Cauchy point step
length, as in \cite[Algorithm~5]{MR4620236} and \cite[Section~5.1]{MR4849972},
although one could simply take $t_k$ to be a constant. The fraction of Cauchy
decrease condition~\eqref{cdc} ensures that $z_k^+\in {\rm dom}\,{\phi}$ since
the left-hand side would be $-\infty$ otherwise.

Traditionally, one decides whether to accept or reject the trial iterate
$z_k^+$ based on the ratio of actual and predicted reductions
\begin{align}
\tilde\rho_k\coloneqq \frac{\ared_k}{\pred_k}\coloneqq\frac{F(z_k)-F(z_k^+)}{m_k(z_k)-m_k(z_k^+)}.
\end{align}
Concretely, given user-specified parameters $0 < \eta_1 < \eta_2 <1$, we
accept the trial iterate $z_{k+1}=z_k^+$ if $\tilde\rho_k \ge \eta_1$ and
otherwise we reject the trial iterate $z_{k+1}=z_k$.  Moreover, if
$\tilde\rho_k < \eta_1$, we decrease the trust-region radius, i.e.,
$\Delta_{k+1}<\Delta_k$, and if $\tilde\rho_k\ge\eta_2$, we increase the
trust-region radius, i.e., $\Delta_{k+1}>\Delta_k$.

For \eqref{eq:prob}, exact evaluations of the objective function $F$
are not possible since they require the solution of a system of PDEs
to evaluate $S(z)$ and hence $f(z)=J(S(z),z)$.  For this problem, we replace
$F$ with an AFEM approximation $F_k$ and employ the computed
reduction 
\begin{align*}
  \cred_k \coloneqq F_k(z_k) - F_k(z_k^+).
\end{align*}
For step acceptance and trust-region radius update, we replace $\tilde\rho_k$
with the ratio of computed and predicted reductions
\begin{align} 
\label{comp}
  \rho_k \coloneqq \frac{\cred_k}{\pred_k}.
\end{align}
In order to guarantee convergence of the trust-region method, we require that
the AFEM approximation of the objective function is sufficiently accurate as
defined in the following condition.
\begin{condition}[Inexact Value] 
\label{as:objred}
There exists a constant $\kval>0$, independent of $k$, such that
the AFEM approximation of the objective function value satisfies
\begin{align} \label{objred1}
  |\ared_k-\cred_k| \leq \kval \zeta_k
  \quad\text{and}\quad \zeta_k^j \leq \gamma \min \{ \pred_k , \epsilon_k \}
	\quad \forall\,k.
\end{align}
Here, $j \in (0,1)$ and $\gamma<\min\{\eta_1,1-\eta_2\}$ are independent of $k$,
and $\{\epsilon_k\}\subset[0,+\infty)$ satisfies $\epsilon_k\to 0$.
\end{condition}
We satisfy \Cref{as:objred} using the mesh refinement algorithm listed as
\Cref{alg:afem-val} with
$\tau_k^{\textup{val}}=\bar{\kappa}_{\textup{val}}[\eta\min\{\pred_k,\epsilon_k\}]^{1/j}$
for some postive user-specified constant $\bar{\kappa}_{\textup{val}}>0$.

As with the evaluation of $f$, the derivative $f'$ also cannot be compute
exactly.  However, the model derivative $g_k$ will heavily influence the search
direction and overall performance of the algorithm.  For this reason, we
require that the approximation $g_k\coloneqq f_k'(z_k) \approx f'(z_k)$
satisfies the following accuracy requirement.
\begin{condition}[Inexact Derivative]
\label{as:grad}
There exists $\kgrad>0$, independent of $k$, such that the
model derivative $g_k$ satisfies
\begin{equation} 
\label{eq:grad}
  \norm{g_k - f' (z_k)}_{\mathcal{Z}^*} \le \kgrad \min \{\Psi_k, \Delta_k\} \quad \forall\, k.
\end{equation}
\end{condition}
Note that the quantities in the right-hand side of the inexact objective
function condition \eqref{objred1} are available at the time of evaluating the
computed reduction, while the right-hand side of the inexact derivative
condition \eqref{eq:grad} depends on $\Psi_k$, which in turn depends on $g_k$.
Consequently, \eqref{eq:grad} must be satisfied iteratively as described in
\Cref{alg:afem-grad-it}.
\begin{algorithm}[!ht]
\caption{Iterative AFEM Derivative Evaluation}
\label{alg:afem-grad-it}
\begin{algorithmic}[1]
\REQUIRE{
Initial mesh $\cT^{h_{k,0}}$ with parameter $h_{k,0}\gets h_k$,
	 $z_k$, 
	 tolerances $\tau_{\max}^{\textup{der}}, \tau_k^{\textup{der}}>0$, 
	 refinement fraction $\theta\in(0,1)$,
   $\bar{\kappa}_{\textup{der}}>0$, and 
   $\Delta_k$.}
\STATE{\textbf{Set tolerance:} Set $\tau=\bar{\kappa}_{\textup{der}}\Delta_k$
       and choose $\xi > \tau$.}
\WHILE{$\xi > \tau$}
  \STATE{\textbf{Derivative computation:} Compute $g_k \approx f'(z_k)$
    and error $\xi$ using \Cref{alg:afem-grad} with tolerance
    $\tau_k^{\textup{der}}=\tau$.}
  \STATE{\textbf{Tolerance update:} Compute $\Psi_k$
    \eqref{eq:psik} and update 
    $\tau = \bar{\kappa}_{\textup{der}}\min\{\Psi_k,\Delta_k\}$.}
\ENDWHILE
\RETURN{The approximate derivative $g_k$.}
\end{algorithmic}
\end{algorithm}

Combining \Cref{alg:afem-val,alg:afem-grad-it} with \cite[Algorithm~2]{MR4620236},
we arrive at the trust-region AFEM solution procedure for \eqref{eq:prob}
listed in \Cref{alg}.
\begin{algorithm}[!ht]
  \caption{Trust-region AFEM algorithm }
  \label{alg}
\begin{algorithmic}[1]
  \REQUIRE{ 
  Inital guess $z_0 \in \text{dom}\,\phi$ and 
	radius $\Delta_0>0$, 
	and constants $0< \eta_1 <\eta_2 <1$,
  $0 < \gamma_1 \le \gamma_2 <1 \le \gamma_3$, $\theta\in(0,1)$,
  $\bar{\kappa}_{\textup{val}}>0$, $\bar{\kappa}_{\textup{der}}>0$,
  $\tau_{\max}^{\textup{val}}>0$, $\tau_{\max}^{\textup{der}}>0$,
  $0 < \gamma < \min\{\eta_1,1-\eta_2\}$ and $j\in(0,1)$, and the positive
  sequence $\{\epsilon_k\}\subset(0,\infty)$.} 
  \FOR{$k = 0,1,2, \cdots$}
    \STATE{\textbf{Model selection:} 
      Compute $g_k$ via \Cref{alg:afem-grad-it} and $B_k=B_k^*\in\cL(\cZ,\cZ^*)$.}
    \STATE{ \textbf{Step computation:} 
      Compute trial iterate $z_k^+$ satisfying \eqref{cdc}.}
    \STATE{\textbf{Step acceptance:} 
      Evaluate $\cred_k$ via \Cref{alg:afem-val} with
      $\tau_k^{\textup{val}}=\bar{\kappa}_{\textup{val}}[\eta\min\{\pred_k,\epsilon_k\}]^{1/j}.$}
    \IF{$\rho_k < \eta_1$}
      \STATE{$z_{k+1} \leftarrow z_k$}
      \STATE{$\Delta_{k+1}\in[\gamma_1\Delta_k,\gamma_2\Delta_k]$}\label{alg:rad_reduct}
    \ELSE
      \STATE{$z_{k+1} \leftarrow z^+$}
      \IF{$\rho_k\in[\eta_1,\eta_2)$}
        \STATE{$\Delta_{k+1}\in[\gamma_2\Delta_k,\Delta_k]$}
      \ELSE
        \STATE{$\Delta_{k+1}\in[\Delta_k,\gamma_3 \Delta_k]$}
      \ENDIF
    \ENDIF
  \ENDFOR
\end{algorithmic}
\end{algorithm}
The subsequent result summarizes the various global convergence results in
\cite[Theorems~1~and~2]{MR4849972}. 
\begin{theorem}[Convergence of \Cref{alg}]\label{th:conv}
  Let $\{z_k\}\subset\cZ$ be the sequence of iterates generated by \Cref{alg}
  and let \Cref{as:jac} holds. 
	In addition, suppose that the sequence of model 
	Hessians $\{B_k\}\subset\cL(\cZ, \cZ^*)$ satisfy
  \begin{equation}\label{eq:hessbnd}
    \sum_{k=1}^\infty \frac{1}{1 + \displaystyle{\max_{i=1,\ldots,k}} \|B_i\|_{\cL(\cZ,\cZ^*)}}=+\infty
  \end{equation}
  and the sequence of AFEM adjoint variables $\{\lambda^{h_k}\}$ is bounded
  by $\kadj>0$. Then
  \begin{equation*}
    \liminf_{k\to\infty} \; \Psi_k = 0 
    \qquad\text{and}\qquad
    \liminf_{k\to\infty} \; \Psi(z_k,t_k) = 0.
  \end{equation*}
  Moreover, if there exists $t_{\max}>0$ such that $t_k\le t_{\max}$ for all
  $k$, then
  \[
    \liminf_{k\to\infty} \; \Psi(z_k,t) = 0 \quad\forall\, t>0.
  \]
  Finally, if there exists $\kcurv>0$ such that
  $\|B_k\|_{\cL(\cZ,\cZ^*)} \le\kcurv$ for all $k$, then
  \begin{equation*}
    \lim_{k\to\infty} \; \Psi_k = 0 
    \qquad\text{and}\qquad
    \lim_{k\to\infty} \; \Psi(z_k,t) = 0 \quad\forall\, t>0.
  \end{equation*}
\end{theorem}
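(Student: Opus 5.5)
The proof reduces \Cref{th:conv} to the abstract convergence theory of the inexact nonsmooth trust-region method, \cite[Theorems~1~and~2]{MR4849972} (equivalently \cite[Algorithm~1]{MR4620236}). That theory assumes \Cref{as:prob-data}, the step conditions (trust-region bound and \eqref{cdc}), \Cref{as:objred}, \Cref{as:grad}, and the Hessian growth condition \eqref{eq:hessbnd}. So the plan is to check that each hypothesis holds for the iterates of \Cref{alg}, then quote the conclusions.

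The hypotheses are handled as follows.

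\textbf{Problem data and step conditions.} \Cref{as:prob-data} follows from \Cref{as:jac} by \Cref{prop:assum}. The step conditions hold by construction, since Step~3 of \Cref{alg} requires them.

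\textbf{Inexact Value.} Write $\ared_k-\cred_k$ as $[J(u^{h}_+,z_k^+)-f(z_k^+)]-[J(u^{h},z_k)-f(z_k)]$, since the $\phi$ terms cancel. Apply \eqref{ercnt-val} at $z_k$ and at $z_k^+$, then \eqref{est3}. This gives
\[
  |\ared_k-\cred_k|\le \alpha_4\max\{C_1,1\}\big(\kappa_{c_1}(\xi_c^{h}+\xi_{c,+}^{h})+\kappa_{c_2}(\|c^h(u^h,z_k)\|+\|c^h(u^h_+,z_k^+)\|)\big).
\]
At termination of \Cref{alg:afem-val}, every term is at most a multiple of $\tau_k^{\textup{val}}$. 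Set $\zeta_k:=\tau_k^{\textup{val}}$, absorbing $\bar\kappa_{\textup{val}}$. Then $\zeta_k^j$ is a constant multiple of $\min\{\pred_k,\epsilon_k\}$, and choosing the constant (the $\eta$ in the tolerance) no larger than $\gamma$ gives \eqref{objred1} with $\kval$ independent of $k$. For this, $\tau^{\textup{val}}_{\max}$ only caps the tolerance, which helps.

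\textbf{Inexact Derivative.} Use \eqref{ercnt-grad} with $\bar\lambda=\lambda^{h_k}$ and the bound $\|\lambda^{h_k}\|_{\cV}\le\kadj$. Combined with \eqref{est3} and \eqref{est1}, this gives
\[
  \|g_k-f'(z_k)\|_{\cZ^*}\le \hat\kappa\,\xi,
\]
where $\xi$ is the total estimator returned by \Cref{alg:afem-grad} and $\hat\kappa$ depends on $\alpha_5(1+\kadj),\alpha_6,C_i,\kappa_{c_i},\kappa_{\cG_i}$. The solve tolerances in \Cref{alg:afem-grad} make the residual norms at most $\tau$. On exit of \Cref{alg:afem-grad-it}, $\xi\le\tau=\bar\kappa_{\textup{der}}\min\{\Psi_k,\Delta_k\}$, where $\Psi_k$ is computed from the returned $g_k$. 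This gives \eqref{eq:grad} with $\kgrad=\hat\kappa\bar\kappa_{\textup{der}}$. A constant factor such as $2$ may appear, because the residual terms are bounded by the inner tolerance rather than by $\xi$ itself; it is harmless.

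\textbf{Main obstacle.} The hardest step is showing that the refinement loops terminate, so that every iteration is well defined. For \Cref{alg:afem-val}, $\xi_c^h\to0$ as $h\to0$ at fixed $z$, and Dörfler marking with newest-vertex bisection drives the estimators to zero. Hence the check is eventually passed whenever $\tau_k^{\textup{val}}>0$, which holds as long as $\pred_k>0$ and $\epsilon_k>0$. Note that $\pred_k>0$ unless $\Psi_k=0$, in which case we stop. For \Cref{alg:afem-grad-it}, termination needs an argument. Suppose $z_k$ is not stationary. As the tolerance shrinks, $g_k\to f'(z_k)$, and $\Psi_k$ converges to $\Psi(z_k,t_k)>0$ by continuity and nonexpansiveness of $\prox{t\phi}$. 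So $\tau$ is eventually bounded below by a positive constant, and the loop exits after finitely many refinements. If instead $z_k$ is stationary, the conclusion holds trivially.

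\textbf{Conclusion.} With all hypotheses verified, \cite[Theorem~1]{MR4849972} yields $\liminf\Psi_k=0$ under \eqref{eq:hessbnd}. To transfer this to $\Psi(z_k,t_k)$, use nonexpansiveness of the prox:
\[
  |\Psi_k-\Psi(z_k,t_k)|\le\|g_k-f'(z_k)\|_{\cZ^*}\le\kgrad\Psi_k.
\]
The statements for arbitrary $t$ follow from the standard monotonicity of $t\mapsto\Psi(z,t)$ and $t\mapsto t\Psi(z,t)$, given $t_k\le t_{\max}$. The $\lim$ statements under $\|B_k\|\le\kcurv$ come from \cite[Theorem~2]{MR4849972} in the same way.
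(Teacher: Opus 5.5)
Your proposal is correct and follows the same route as the paper: you reduce to \cite[Theorems~1~and~2]{MR4849972} by checking \Cref{as:objred,as:grad} via \eqref{ercnt-val}, \eqref{ercnt-grad}, \eqref{est3}, \eqref{est1} and the exit tests of \Cref{alg:afem-val,alg:afem-grad-it}. This yields $\kval$ and $\kgrad$ that agree with the paper's up to harmless constant factors, and your transfer from $\Psi_k$ to $\Psi(z_k,t)$ (prox nonexpansiveness plus monotonicity in $t$) only makes explicit what the paper leaves to the cited theorems.

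The one loose part is your optional termination sketch for the refinement loops, which the paper does not attempt. There $t_k$ may itself depend on $g_k$, and \Cref{alg:afem-grad} only guarantees $\xi\le 3\min\{\tau^{\textup{der}}_{\max},\tau\}$ on exit. So finite termination of \Cref{alg:afem-grad-it} additionally needs $t_k$ confined to a bounded interval and tightened (e.g.\ exact) discrete solves; otherwise the outer loop may repeat indefinitely with $\xi\in(\tau,3\tau]$ on an unchanged mesh.
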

\begin{proof}
  This results follows \cite[Theorems~1~and~2]{MR4849972} if
  \Cref{as:objred,as:grad} are satisfied by \Cref{alg:afem-val,alg:afem-grad}.
  However, \Cref{as:objred,as:grad} are verified by \Cref{prop1} and
  \eqref{eq:errest}.  In particular, \eqref{objred1} holds with
  $\kval=2\alpha_4\max\{\kappa_{c_1},\kappa_{c_2}\}\bar{\kappa}_{\textup{val}}$
  and \eqref{eq:grad} holds with
  $\kgrad=3\max\{\alpha_5(1+\kadj),\alpha_6\}\max\{\kappa_{c_1},\kappa_{c_2},\kappa_{\cG_1},\kappa_{\cG_2}\}\bar{\kappa}_{\textup{der}}$.
\end{proof}
Note that the Hessian growth condition \eqref{eq:hessbnd} is weaker than
the typical requirement of bounded model Hessians,
cf.~\cite{MR4620236,MR933734,MR967689},  and is satisfied for certain classes
of problems when using, e.g., safeguarded secant approximations
\cite{MR933734}.  Moreover, the approximate adjoints are typically bounded
since the sequence of true adjoints is bounded via \Cref{as:jac}.  This is the
case when, e.g., the discretized adjoint Jacobians $c_u^{h_k}(u,z)^*$ satisfy
the discrete analogue of the uniform inf-sup condition \eqref{eq:infsup2} and
the discretized states satisfy $u^{h_k}\in\cN_{\cU}$.  In addition,
\Cref{th:conv} ensures that the iterates of \Cref{alg} will satisfy
\[
  \Psi_k \le \varepsilon
\]
for fixed $\varepsilon>0$ after only finitely many iterations.  For worst-case
complexity analysis, see \cite{MR4620236,leconte2025complexity}. Furthermore,
under additional assumptions, one can prove that the \Cref{alg} iterates
converge superlinearly, even quadratically
\cite[Theorem~3]{kouri2023localconv}.

%% file: numerical_examples.tex
We now illustrate our approach on two numerical examples.  The first is a
sparse-control problem governed by Poisson's equation on the L-shaped domain. 
The second is the more challenging, heat-conduction topology optimization 
problem studied in \cite{MR2208515}. Throughout this section, we denote the
physical domain by $\Omega\subset\R^2$ and the number of degrees of freedom
(DoFs) by $N_{\rm dof}$. For both examples, $\cU=\cV$, $\cU^h=\cV^h$, and
$\cZ=L^2(\Omega)$. Moreover, we represent the control $z\in \cZ$ at each
iteration by a piecewise constant function defined on the associated mesh
$\cT^h$, thus generating a sequence of controls defined on a sequence of nested
meshes. For all results, we terminate \Cref{alg} when $\Psi_k\le 10^{-6}$ and
we set the mesh refinement parameter $\theta=0.05$.  Moreover, we set
$\Delta_0=50$, $\eta_1=0.05$, $\eta_2=0.9$, $\gamma_1=0.25$, $\gamma_2=1.0$,
$\gamma_3=2.5$, $\tau_{\max}^{\textup{val}}=1.0$,
$\tau_{\max}^{\textup{der}}=1.0$, $\gamma= \epsilon_k = 1-10^{-3}$, and $j=0.9$.
We specify the values of $\bar{\kappa}_{\textup{val}}$ and
$\bar{\kappa}_{\textup{der}}$ in each example.

\subsection{A Posteriori Error Estimation for Elliptic PDEs}
Since both applications are governed by elliptic PDEs, we now review a
posteriori error estimation for the general elliptic PDE
\begin{subequations}\label{eq:elliptic}
\begin{align}
  -\nabla\cdot(A\nabla u) + bu &= q \quad \text{in }\Omega \\
  u &= 0 \quad \text{on }\Gamma_D \qquad \mbox{and} \qquad
  (A\nabla u)\cdot n = 0 && \text{on }\Gamma_N,
\end{align}
\end{subequations}
where $\Gamma_D$ and $\Gamma_N$ are disjoint subsets of the boundary
$\partial\Omega$ satisfying $\partial\Omega=\Gamma_D\cup\Gamma_N$.  Moreover,
$A:\Omega\to\R^{2\times 2}$ is a symmetric matrix function that is uniformly
coercive and bounded, i.e., there exists $0 < a_{\min} \le a_{\max} < \infty$
such that
\[
  v^\top A v \ge a_{\min} \quad\text{and}\quad \|A\|\le\alpha_{\max}
  \;\;\text{a.e.},
\]
and $b\in L^\infty(\Omega)$ is nonnegative a.e. We denote the solution space
for \eqref{eq:elliptic} by
\[
  \cU = H^1_{\Gamma_D}(\Omega) \coloneqq \{u\in H^1(\Omega)\,\vert\,u\vert_{\Gamma_D} = 0\}
\]
and assume that $q\in\cU^*$.  We further assume that either
$\Gamma_D\neq\emptyset$ or that there exists $\Omega_b\subseteq\Omega$ with
postive measure on which $b>0$ a.e. Under these assumptions, the bilinear form
associated with the weak form of \eqref{eq:elliptic} is symmetric,
continuous and coercive.  Consequently, the Lax-Milgram Lemma guarantees
existence and uniqueness of solutions.  We discretize \eqref{eq:elliptic}
using continuous finite elements defined on a mesh $\cT^h$.  For each mesh
element $T\in\cT^h$, we denote the space of degree $s\in\mathbb{N}$ polynomials
on $T$ by $\mathbb{P}_s(T)$ and the finite-element solution space by
\begin{equation}\label{eq:fe_space}
  \cU^h = \{ v \in C(\bar{\Omega})\cap\cU~\vert~ v|_T \in \mathbb{P}_s(T) ~\forall~ T \in \cT^h \}.
\end{equation}
Let $u\in\cU$ denote the solution to \eqref{eq:elliptic} and $u^h\in\cU^h$ its
finite-element approximation. As in \cite{MR4793681}, there exists a positive
constant $C_3$, independent of mesh parameter $h$, such that the error between
$u$ and $u^h$ satisfies
\begin{align} \label{reliability-U}
  \|u-u^h\|_{\cU}^2 \leq C_3 \sum_{i=1}^{3} (\xi_{\cU,i}^h)^2,
\end{align}
where $\xi_{\cU,i}^h$, $i=1,2,3$, are \textit{reliable a posteriori} error
estimators for the volume residual
\begin{subequations}
\begin{equation*}
  (\xi_{\cU,1}^h)^2\coloneqq \sum_{T \in \cT^h}~h_T^2 \|q+\nabla\cdot(A\nabla u^h) - bu^h\|^2_{L^2(T)},
\end{equation*}
and the edge jump residuals
\begin{equation*}
  (\xi_{\cU,2}^h)^2\coloneqq \sum_{e \in \cE^h_\Omega}~h_e \|[(A\nabla u^h)\cdot n]\|_{L^2(e)}^2
  \quad\text{and}\quad
  (\xi_{\cU,3}^h)^2\coloneqq \sum_{e \in \cE^h_{\partial\Omega}}~h_e\|(A\nabla u^h) \cdot n\|_{L^2(e)}^2.
\end{equation*}
\end{subequations}
Here, $\cE^h_{\partial\Omega}\coloneqq\{E\in\cE^h\,\vert\,E\in\Gamma_N\}$
denotes the set of Neumann edges and $[\cdot]$ denotes the jump across interior
edge $e$.  Similarly, as in \cite{MR3022214,MR1270622}, the following
\textit{reliable a posteriori} $L^\infty$-error estimate holds: for a positive
constant $C_4 >0$, independent of $h$, we have
\begin{align} \label{reliability}
  \|u-u^h\|_{L^\infty(\Omega)} \leq C_4 |\log h|^2 \sum_{i=1}^{3} \xi_{\infty,i}^h,
\end{align}
where the volume residual $\xi_{\infty,1}^h$ is
\begin{subequations}
\begin{equation*}
  \xi_{\infty,1}^h\coloneqq \underset{T \in \cT^h}{\max}~h_T^2 \|q+\nabla\cdot(A\nabla u^h) - bu^h\|_{L^{\infty}(T)}
\end{equation*}
and the edge jump residuals are
\begin{equation*}
  \xi_{\infty,2}^h\coloneqq \underset{e \in \cE^h_\Omega }{\max}~h_e \|[(A\nabla u^h)\cdot n]\|_{L^{\infty}(e)}
  \quad\text{and}\quad
  \xi_{\infty,3}^h\coloneqq \underset{e \in \cE^h_{\partial\Omega}}{\max}~h_e\|(A\nabla u^h)\cdot n\|_{L^{\infty}(e)}.
\end{equation*}
\end{subequations}

\subsection{Poisson Control on the L-Shaped Domain}\label{sec:poisson}
\label{subscn:ncvxdom}
Let $\Omega \subset \mathbb{R}^2$ be the nonconvex L-shaped domain and
consider the following distributed optimal control problem:
\begin{equation}
\label{eq:poisson_costfunctional}
\min_{z\in\cZ} \;
\frac12 \|S(z)-u_d\|^2_{L^2(\Omega)}
+ \frac{\alpha}{2}\|z\|^2_{L^2(\Omega)}
+ \beta\|z\|_{L^1(\Omega)},
\end{equation}
where the state $S(z) = u\in\cU=H^1_0(\Omega)$ solves the Poisson
equation
\begin{subequations} \label{eq:poissonequation}
\begin{align}
  -\Delta u = z \quad \text{in } \Omega, \qquad \mbox{and} \qquad
          u =0 \quad \text{on } \partial\Omega,
\end{align}
\end{subequations}
and the regularization parameters are $\alpha=10^{-4}$ and $\beta=10^{-2}$. 
With respect to \eqref{eq:prob}, $J(S(\cdot),\cdot)$ is the data misfit and
$L^2$ regularization and $\phi$ is the $L^1$ regularization. 
For the discretized state $u^h$, we utilize the computable error
estimators in \eqref{reliability-U} with $A=\mathbb{I}$ the identity matrix,
$b\equiv 0$, $q=z$ and $\Gamma_D=\partial\Omega$.  Similarly, we use
\eqref{reliability-U} for the discretized adjoint $\lambda^h$, where again
$A=\mathbb{I}$, $b\equiv 0$ and $\Gamma_D=\partial\Omega$, but $q=u^h-u_d$.
Note that for this example, $\xi^h_{\cU,3}=0$ for both the state and adjoint.
We discretize \eqref{eq:poissonequation} using quadratic finite elements on
a triangular mesh because, as observed in \cite[7.1.2]{MR2765487}, 
the error estimators for linear elements decay too slowly; 
see the first row of \Cref{tbl:poissonControl} for a depiction of the initial grid.
We start with 225 state DoFs
and cap refinement at 10,000 DoFs. 
We additionally set the inexact constants to be
$\bar{\kappa}_{\textup{val}}=\bar{\kappa}_{\textup{der}}=10^6$.
\begin{figure}[!ht]
\centering
\begin{tabular}{@{}c|c|@{}c@{}c@{}c}
\toprule
$N_{\rm dof}$ & $k$   & Grid & $z$ & $u^h$ \\
\hline
225          & 0 & \rowincludegraphics[width=.25\linewidth]{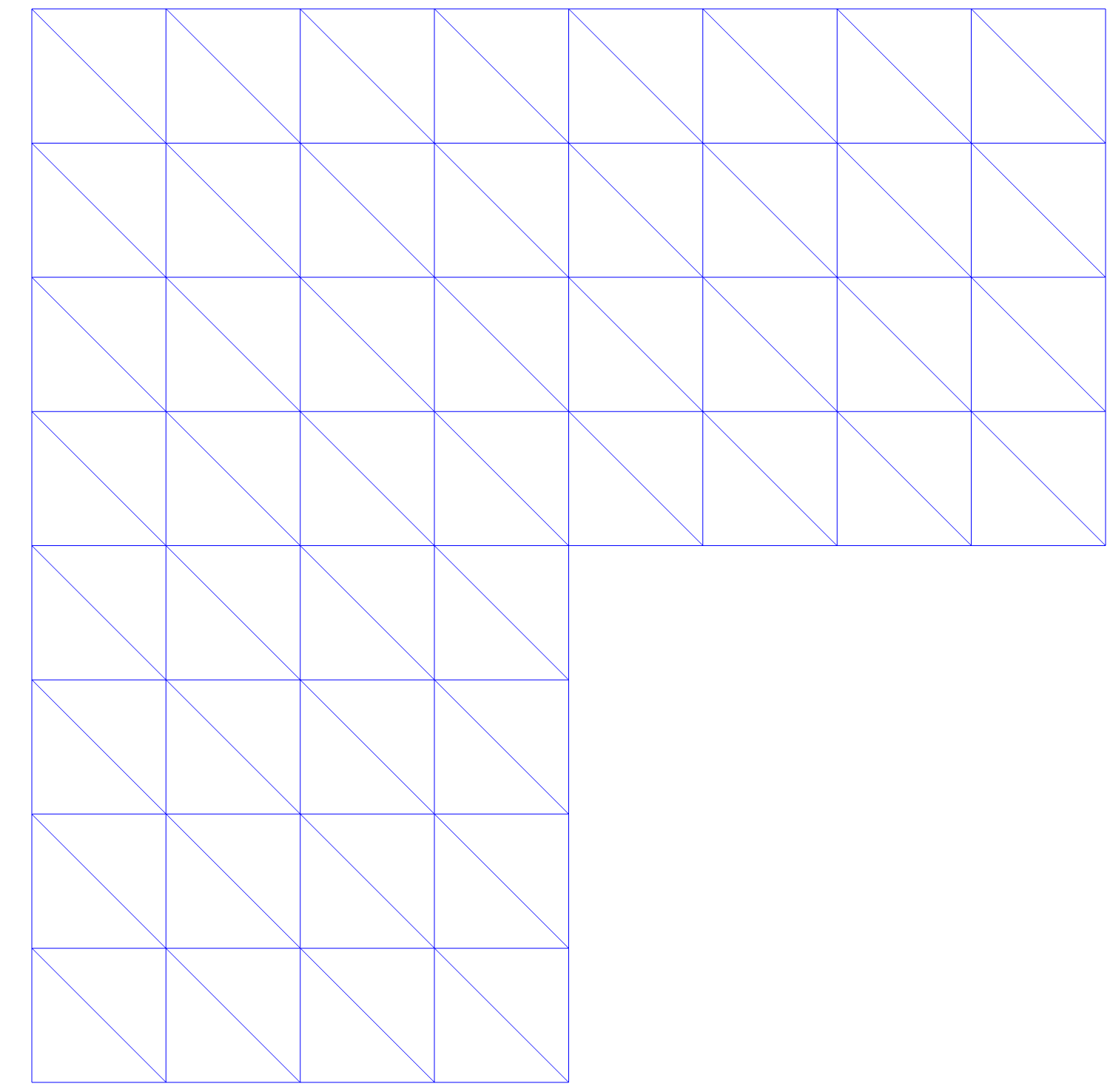}
                 & \rowincludegraphics[width=.25\linewidth]{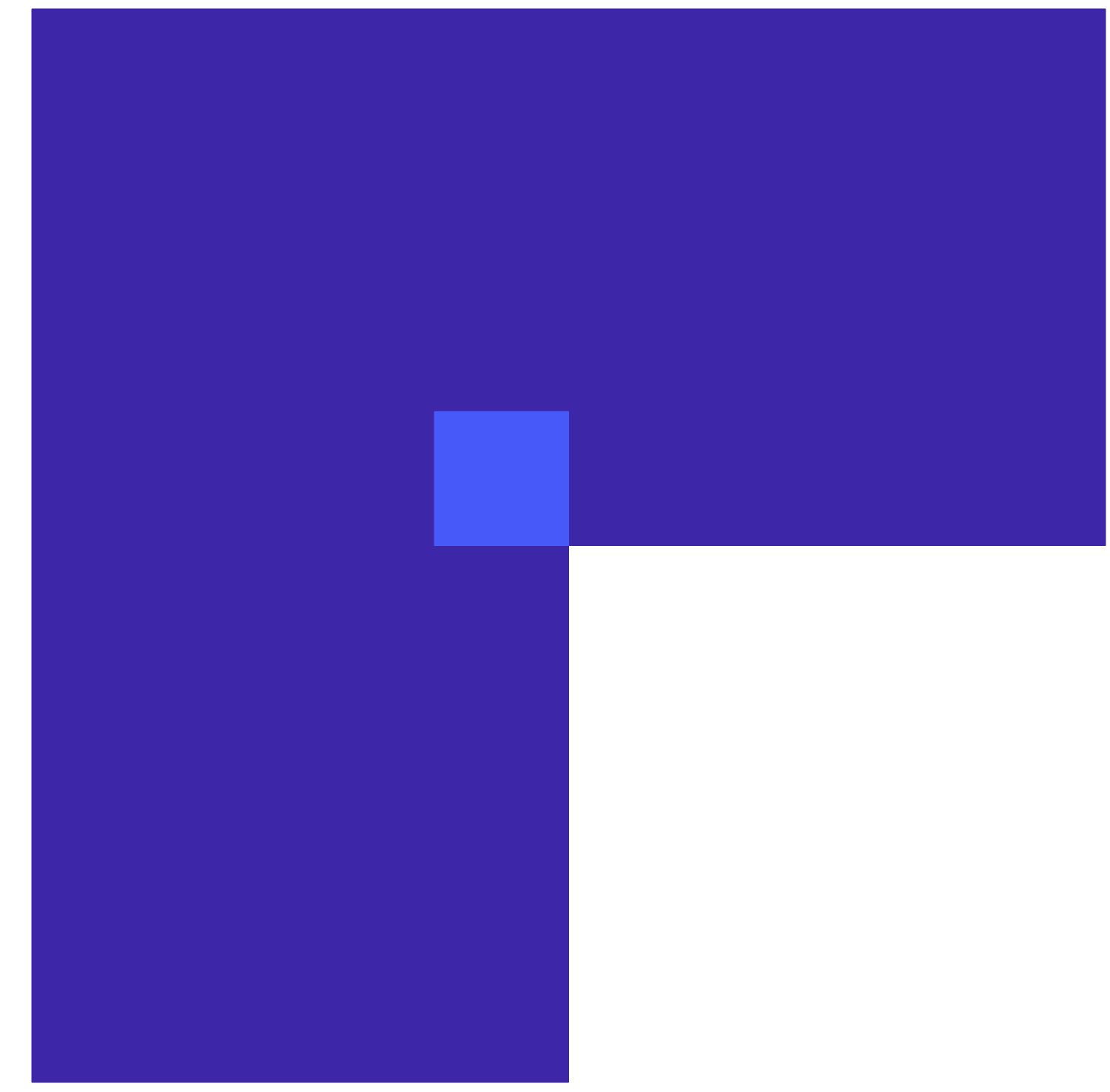}
                 & \rowincludegraphics[width=.25\linewidth]{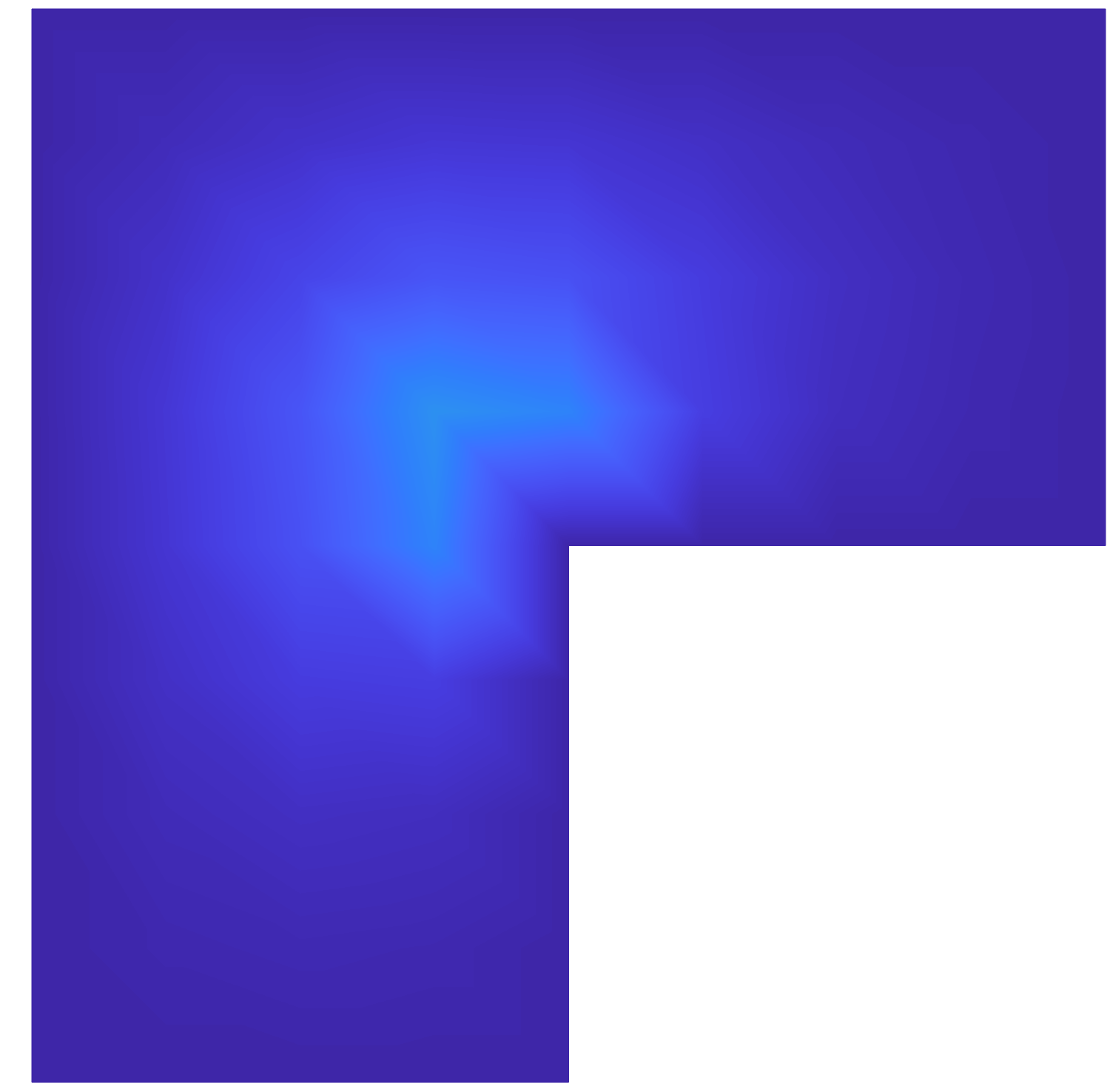}\\
600          & 4 & \rowincludegraphics[width=.25\linewidth]{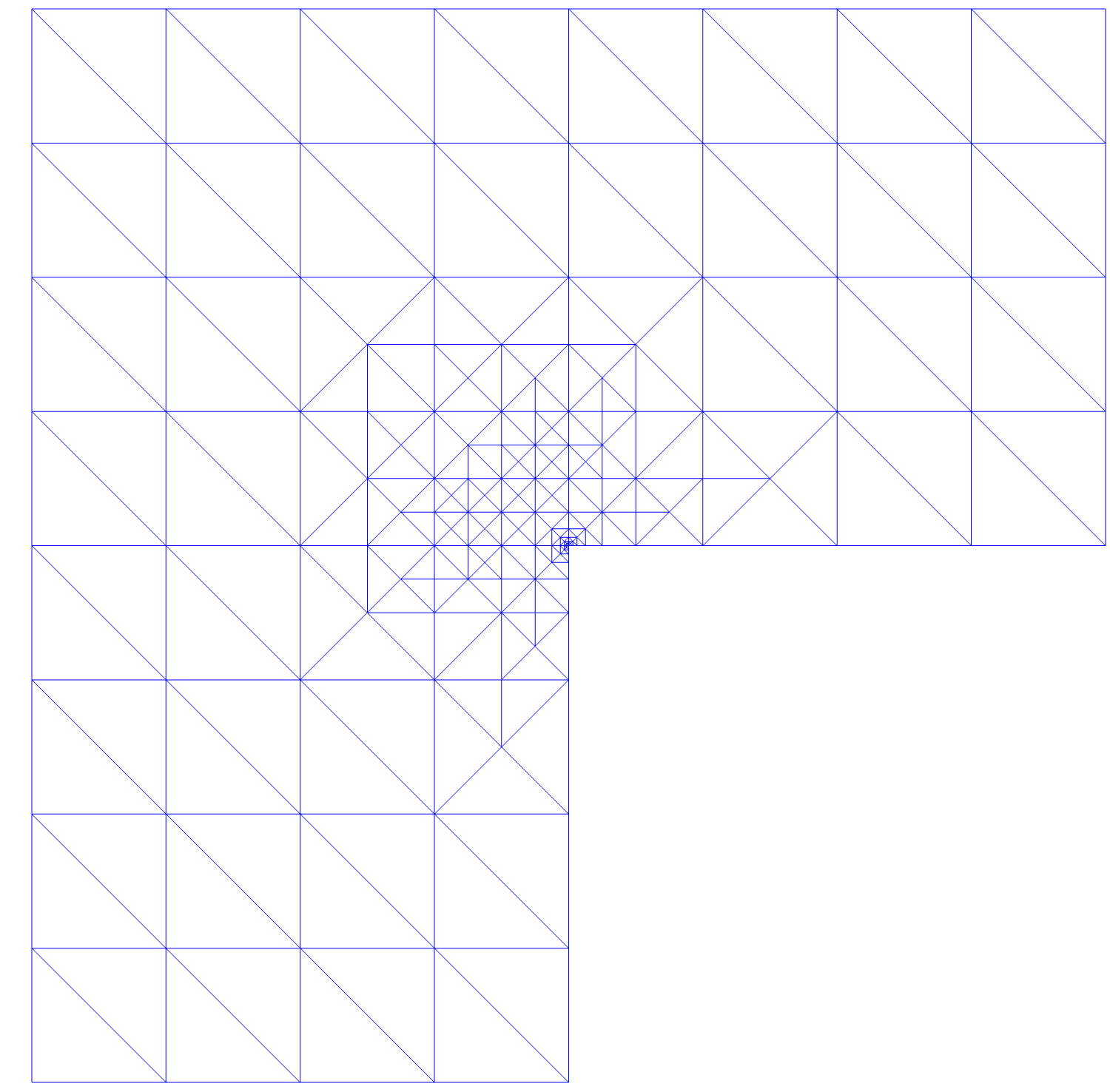}
                 & \rowincludegraphics[width=.25\linewidth]{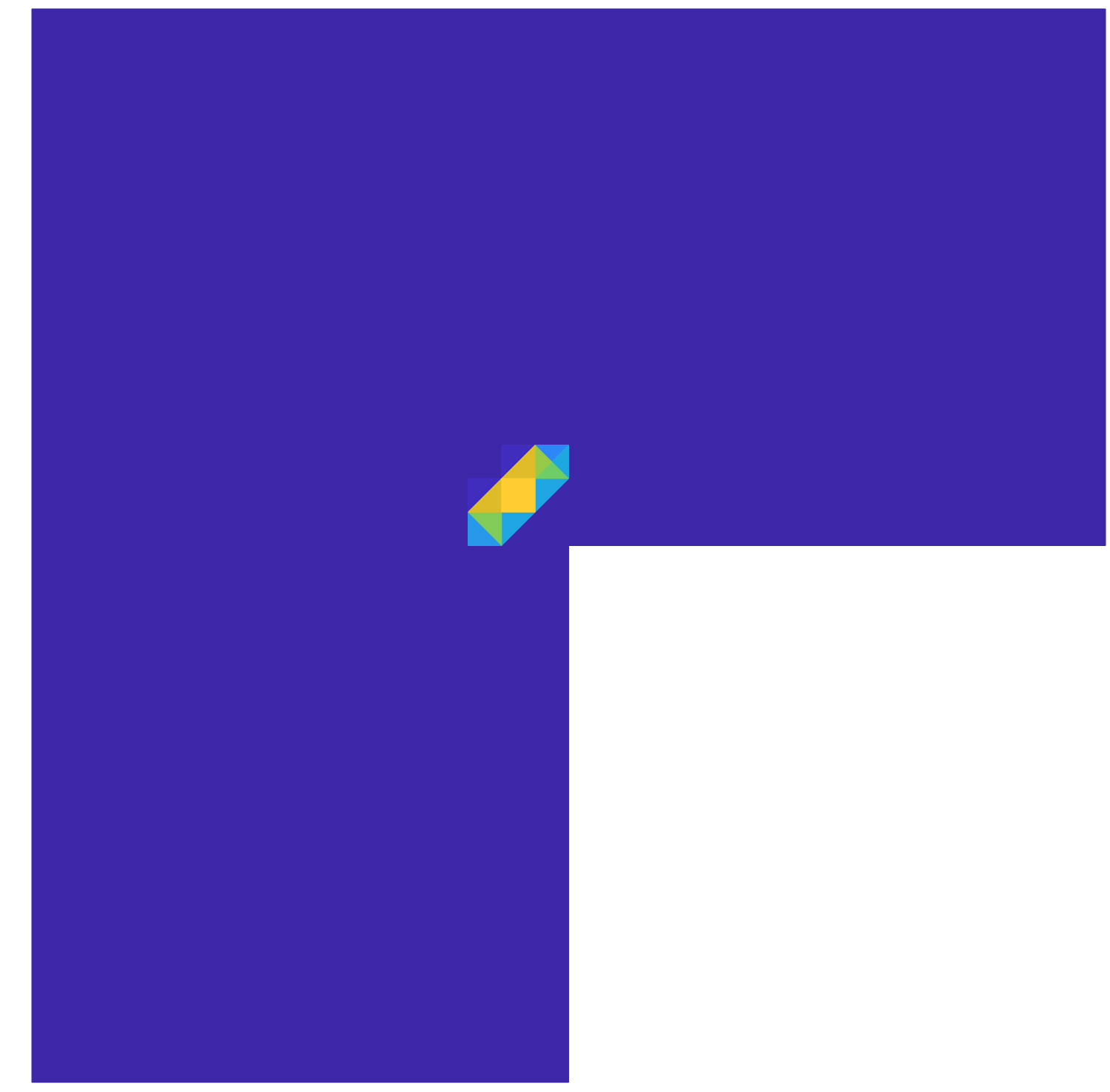}
                 & \rowincludegraphics[width=.25\linewidth]{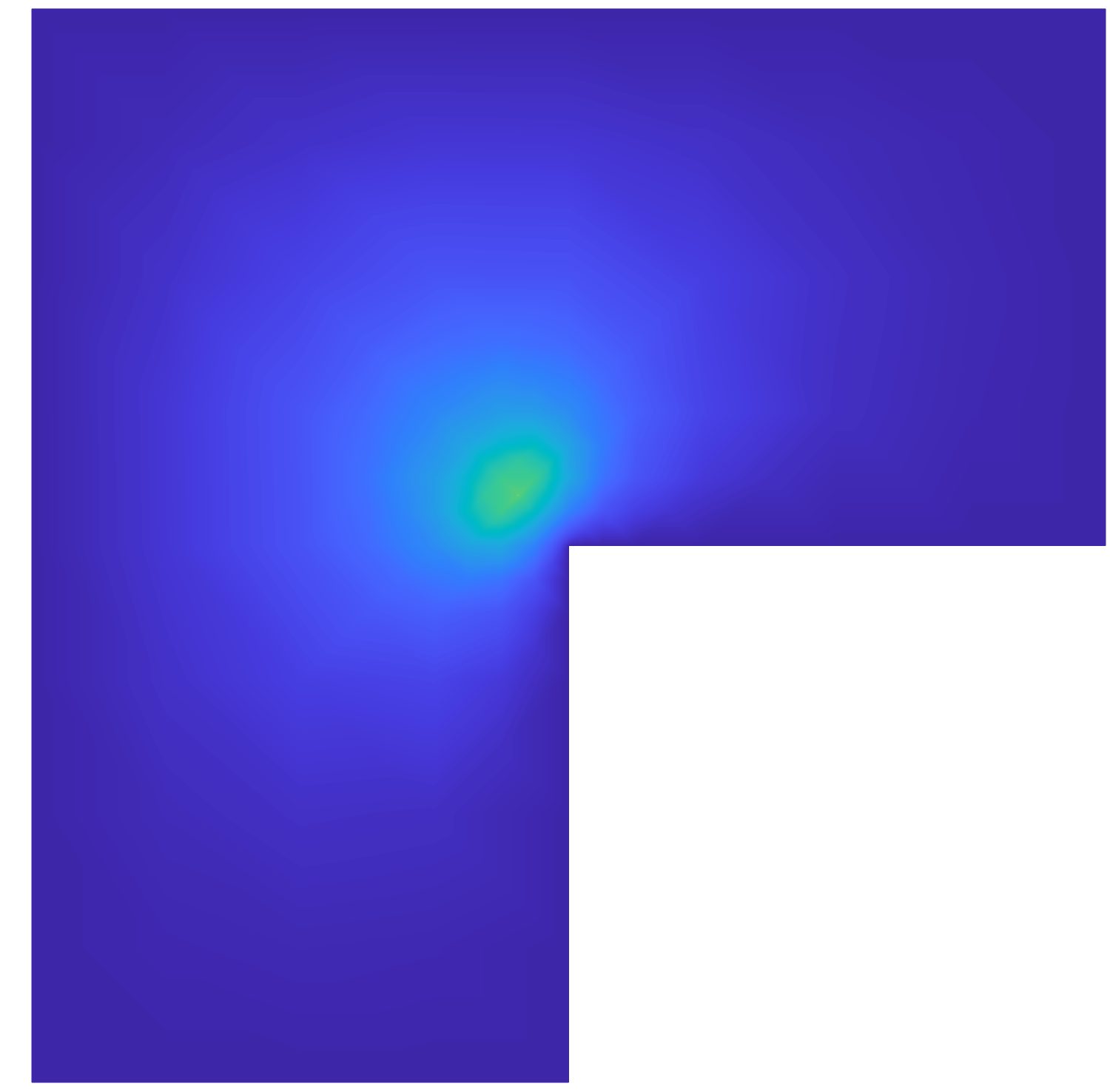}\\
$2\cdot10^3$ & 5 & \rowincludegraphics[width=.25\linewidth]{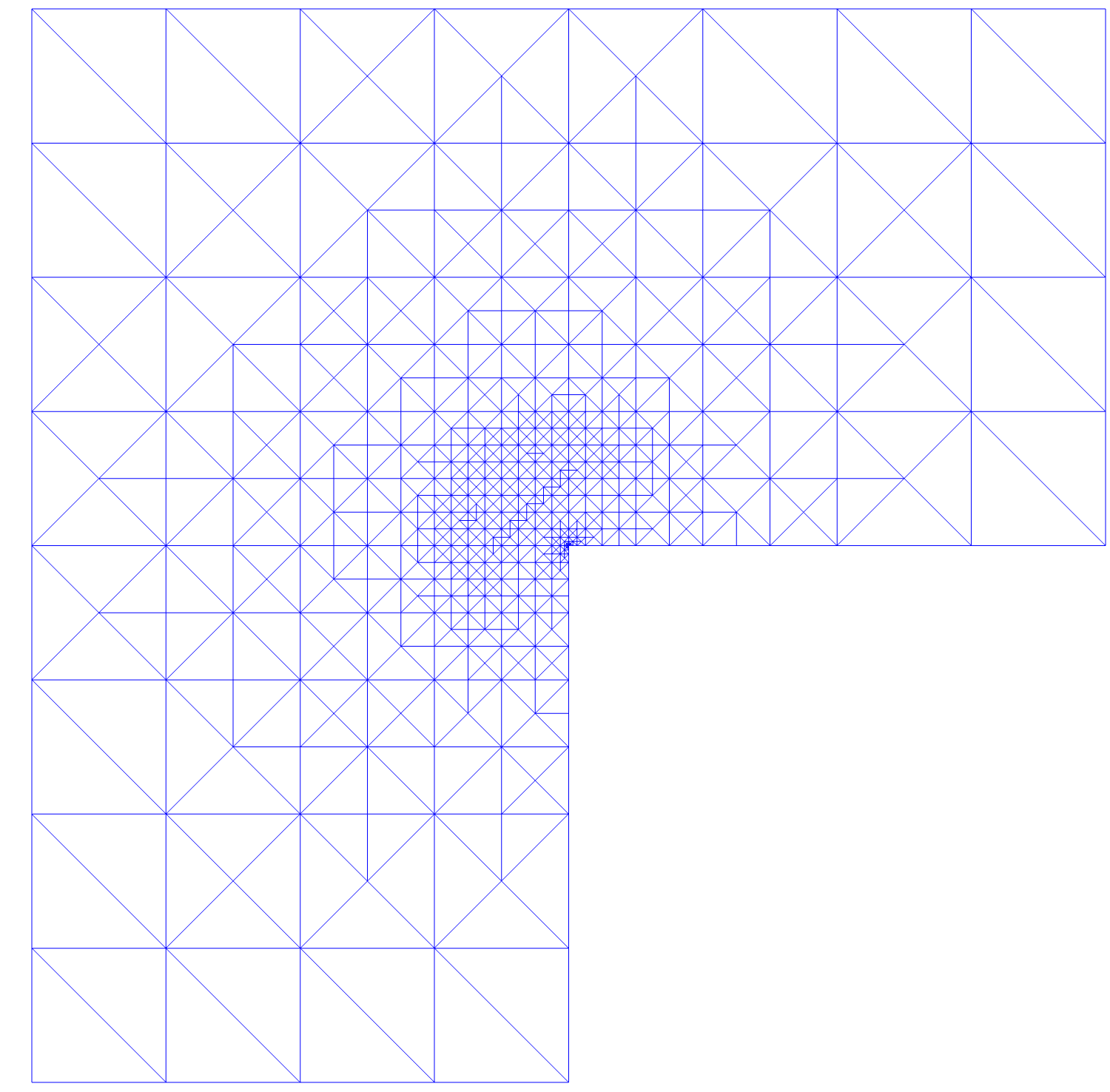}
                 & \rowincludegraphics[width=.25\linewidth]{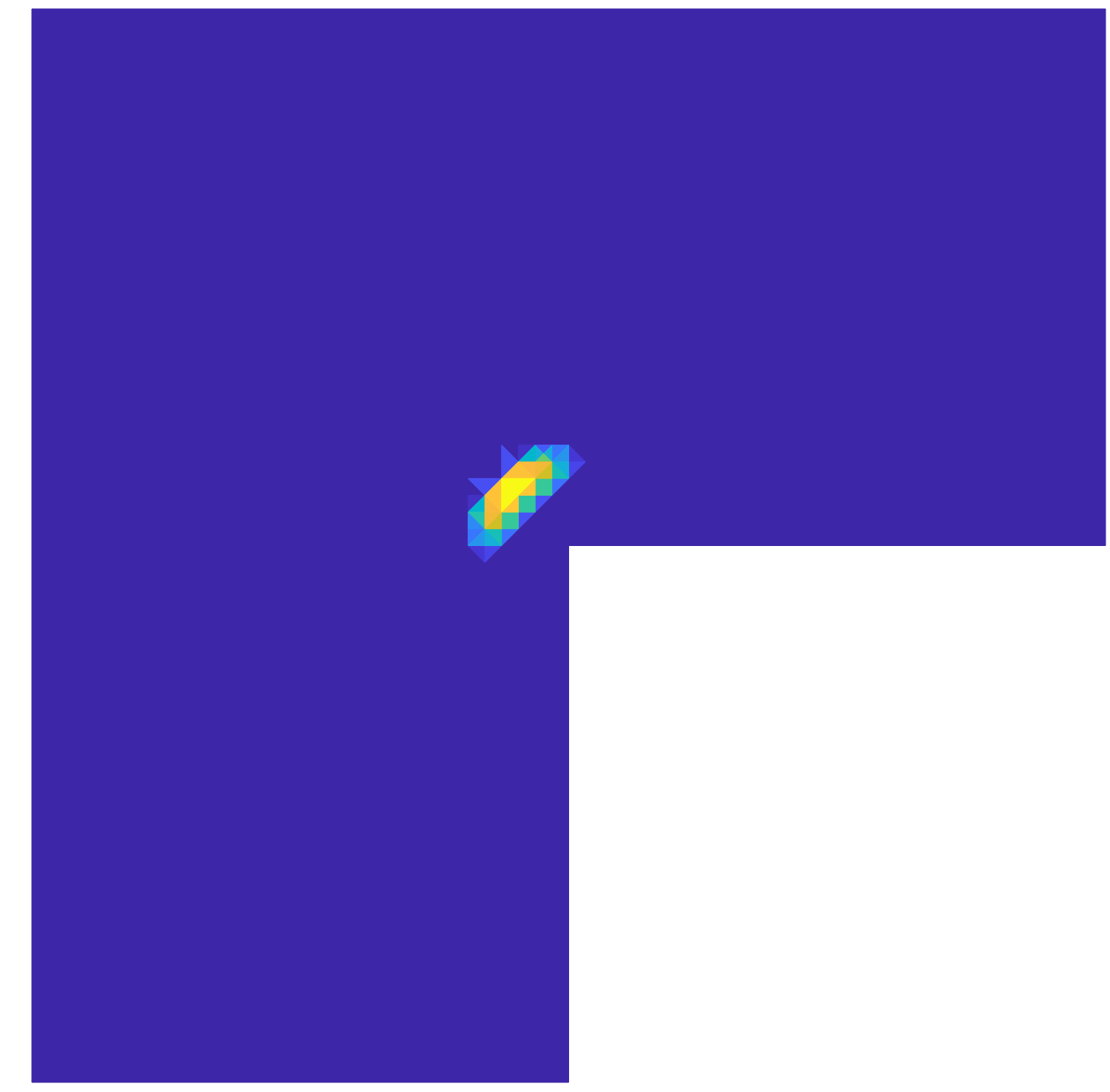}
                 & \rowincludegraphics[width=.25\linewidth]{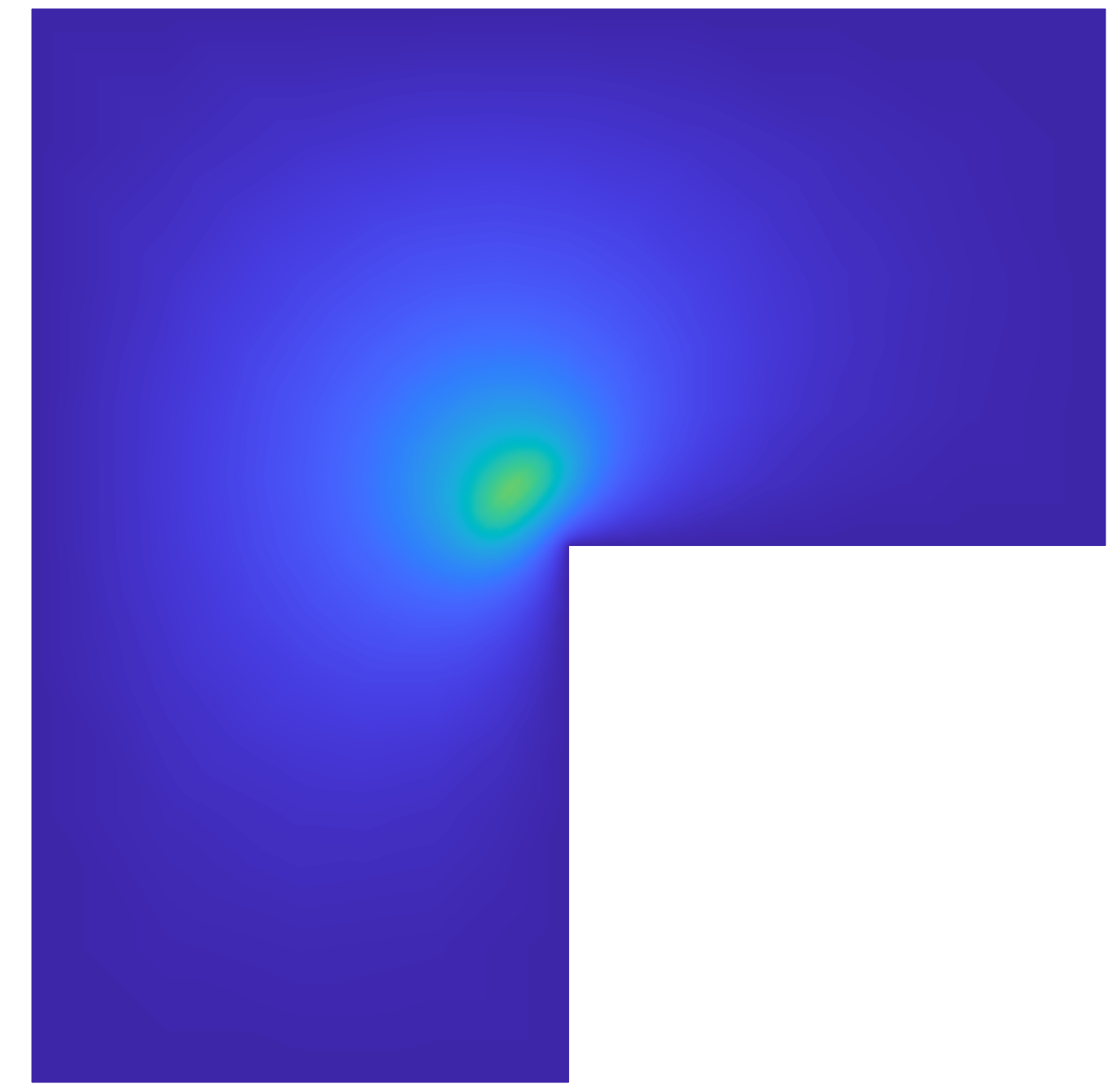}\\
$10^4$       & 8 & \rowincludegraphics[width=.25\linewidth]{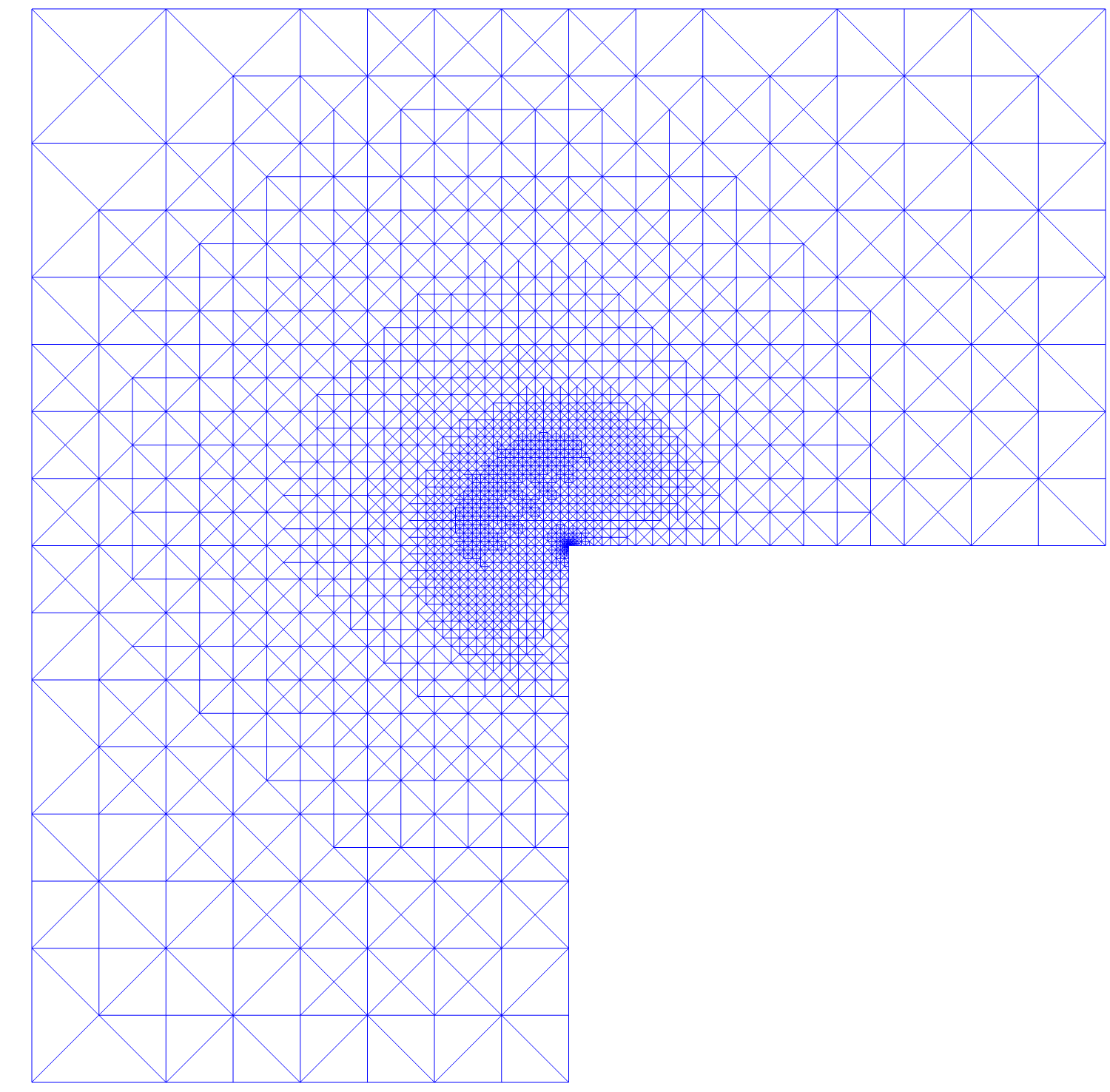}
                 & \rowincludegraphics[width=.25\linewidth]{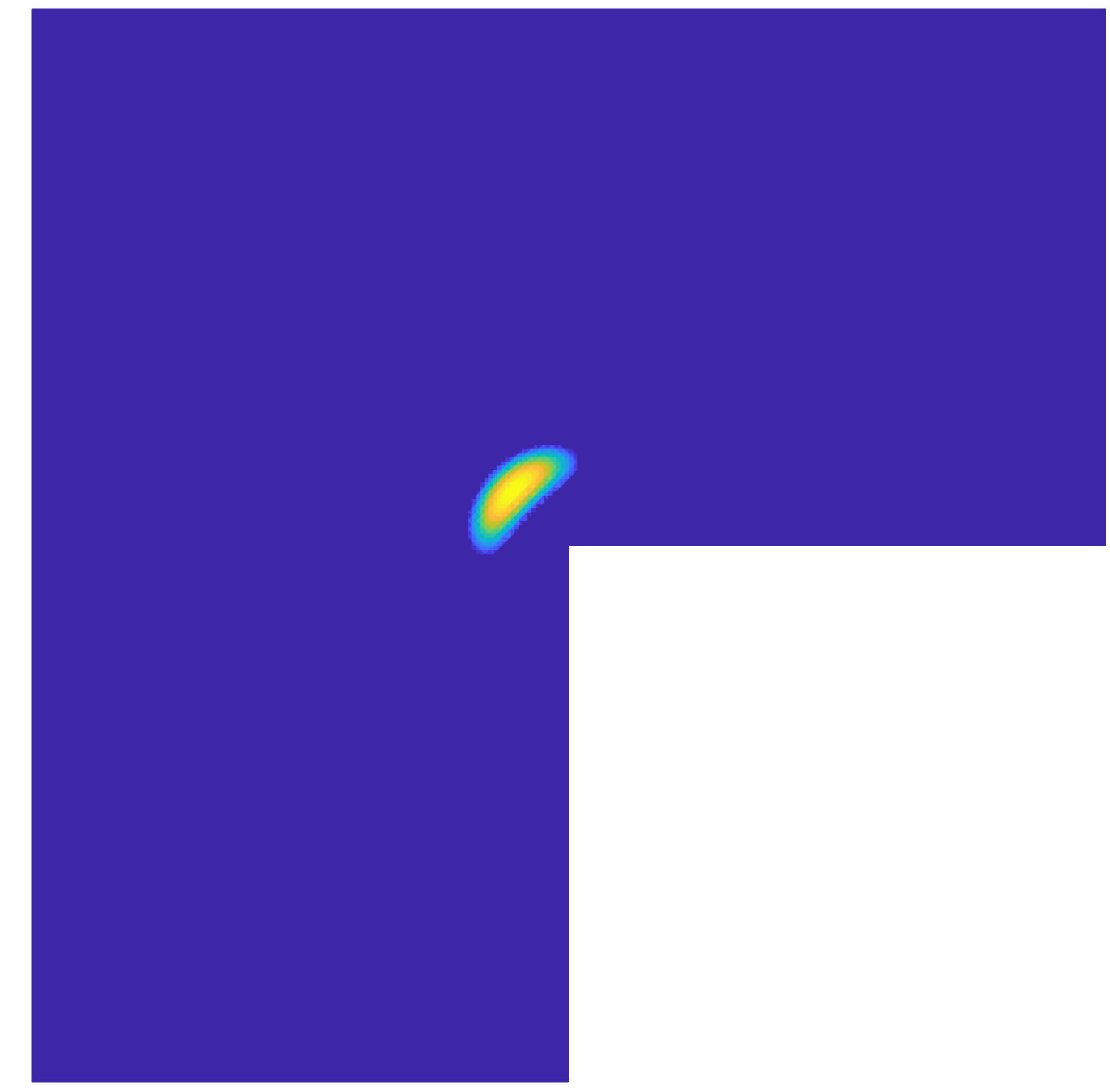}
	               & \rowincludegraphics[width=.25\linewidth]{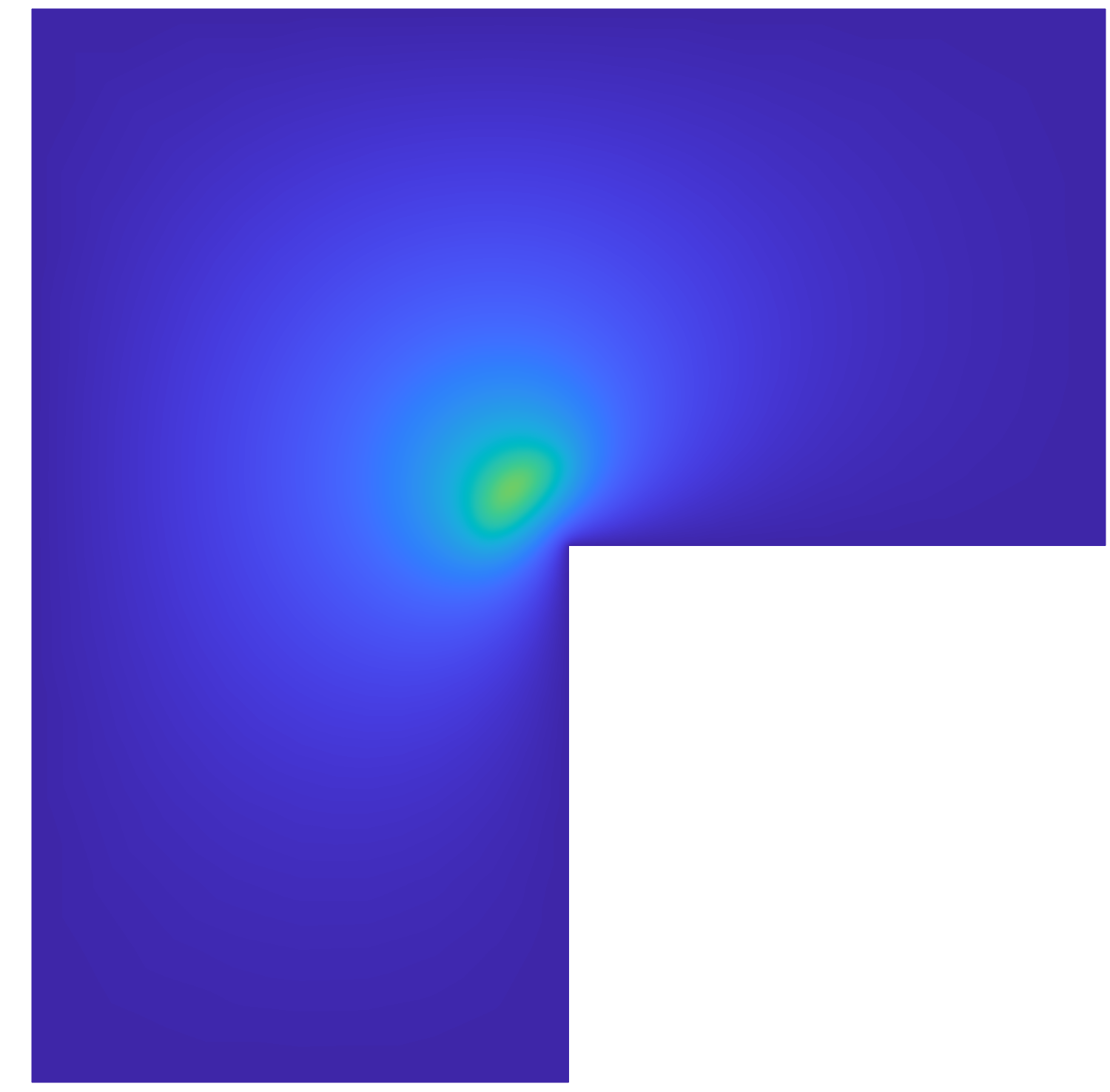}\\
\bottomrule
\end{tabular}
\caption{Results of \Cref{alg} for the Poisson control problem.
The last row represents the final iteration mesh, control, and state.  
The maximum DoFs limit of 10,000 is reached at iteration 5, resulting in 3 full
budget iterations.
}
\label{tbl:poissonControl}
\end{figure}
\Cref{tbl:poissonControl} depicts the control at iterations $k\in\{0,4,5,8\}$, 
where $k=8$ is the final iteration.
We observe that as \Cref{alg} progresses, the state and control are refined
around the re-entrant corner, accurately capturing the sharp local behavior
induced by the corner.

\subsection{Heat Conduction Topology Optimization}
\label{subscn:top}
We turn our attention to the heat-conduction topology optimization problem
in which we compute a distribution of material $z:\Omega \rightarrow [0,1]$
in $\Omega=(0,1)^2$ that minimizes energy while satisfying a volume constraint:
\begin{equation}
\label{eq:top_full}
\begin{aligned}
  &\min_{z \in \cZ}   \;     \int_\Omega [q \cdot S(z)](x) \textup{d} x
  &\textup{subject to}\quad  \int_\Omega z(x) \, \textup{d}x = v_0\vert \Omega \vert, \quad 0\le z \le 1\;\;\text{a.e.},
\end{aligned}
\end{equation}
where $S(z) = u\in \cU = H^1_{\Gamma_D}(\Omega)$ solves the diffusion equation
\begin{subequations}\label{eq:heat}
\begin{align}
  -\dv[K(\mathbb{F}z)\nabla u]   & = q \quad  \text{in} \,          \Omega,   \label{eq:F1} \\
   K(\mathbb{F}z)\nabla u\cdot n & = 0 \quad  \text{on} \, \partial \Omega_N, \label{eq:F2} \\
                               u & = 0 \quad  \text{on} \, \partial \Omega_D, \label{eq:F3}
\end{align}
\end{subequations}
where $K(\rho)$ is the cubic solid isotropic material with penalization (SIMP)
material model, i.e.,
\(
  K(\rho) = K_{\min} + (K_{\max} - K_{\min})\rho^3
\)
and $\rho = \mathbb{F}z\in H^1(\Omega)$ is the Helmholtz-filtered density,
which is computed by solving
\begin{subequations}\label{eq:filter}
\begin{align}
-r \Delta \rho + \rho & = z \quad \text{in}\, \Omega, \label{midfilter1} \\
\nabla \rho \cdot n & = 0 \quad \text{on} \, \partial \Omega. \label{midfilter2}
\end{align}
\end{subequations}
With respect to \eqref{eq:prob}, $J(S(\cdot),\cdot)$ is the objective function
in \eqref{eq:top_full} and $\phi$ is the indicator function of the constraints.
The sets $\partial\Omega_D$ and $\partial\Omega_N$ are disjoint subsets of the
boundary $\partial\Omega$ satisfying $\partial\Omega_D\neq\emptyset$, and
$\partial\Omega_D\cup\partial\Omega_N=\partial\Omega$.
We consider the two examples depicted in \Cref{fig:bcs}.
In the first (i.e.\ the left panel in \Cref{fig:bcs}), we have 
$$\partial\Omega_D = (\{0\}\times [0,1])\cup([0,1]\times\{1\})
\quad\text{and}\quad v_0=0.4,$$
while in the second (i.e.\ the right graphic of \Cref{fig:bcs}), we have
$$\partial \Omega_D = \{0\}\times[0.4,0.6]
\quad\text{and}\quad v_0=0.1.$$
For both examples, we set
$\bar{\kappa}_{\textup{val}}=\bar{\kappa}_{\textup{der}}=10^9$ and the maximum
number of degrees of freedom to be 150,000; this number was informed by
\cite{MR2765487}. The reasoning for such high values of
$\bar{\kappa}_{\textup{val}}$ and $\bar{\kappa}_{\textup{der}}$ is to deter
refinement at the beginning of the algorithm, thereby avoiding the slow convergence
associated with the linear discretization.
For both examples, we set $q\equiv 10^{-2}$, $K_{\max}=1$, $K_{\min}=10^{-3}$ and 
$r\equiv 10^{-2}/(2\sqrt{3})$.
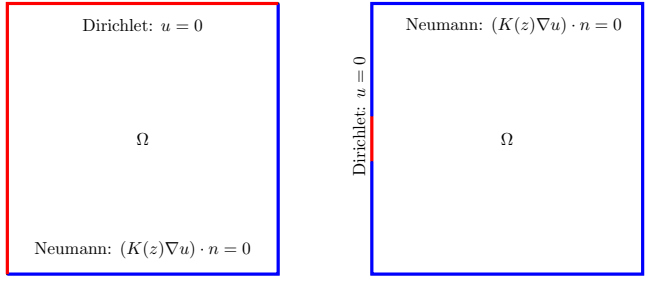
\begin{figure}
   \centering
    \begin{tikzpicture}[scale=0.6,  transform shape]
    \coordinate (A) at (0,0);
    \coordinate (B) at (6,0);
    \coordinate (C) at (6,6);
    \coordinate (D) at (0,6);
    \draw[black, thick] (A) -- (B) -- (C) -- (D) -- cycle;
    \node at (3,3) {$\Omega$};
    \draw[blue, very thick] (A) -- (B) node[pos=.5,above=2mm, black] {Neumann: $(K(z)\nabla u) \cdot {n}=0$} -- (C)  ;
    \draw[red, very thick] (C) -- (D) node[midway,below=2mm, black] {Dirichlet: $u=0$}
		-- (A) ;
    \end{tikzpicture}
    \qquad
    \begin{tikzpicture}[scale=0.6, transform shape]
    \coordinate (A) at (0,0);
    \coordinate (B) at (6,0);
    \coordinate (C) at (6,6);
    \coordinate (D) at (0,6);
    \coordinate (E) at (0,3.5);
    \coordinate (F) at (0,2.5);
    \draw[black, thick] (A) -- (B) -- (C) -- (D) -- (E) -- (F) -- cycle;
    \node at (3,3) {$\Omega$};
    \draw[red, very thick] (E) -- (F) node[midway, above=5mm, black, rotate=90] {Dirichlet: $u=0$};
    \draw[blue, very thick] (F) -- (A) -- (B) -- (C) -- (D) -- (E)  node[pos=.2, right=6mm, black] {Neumann: $(K(z)\nabla u) \cdot {n}=0$};
    \end{tikzpicture}
    \caption{Illustration of the boundary conditions for the heat conduction
        topology optimization problems. The left figure shows Neumann
        conditions on the top and left sides, and Dirichlet conditions on the
	bottom and right sides.	The right figure depicts Neumann conditions
	everywhere except an interval around $x=(0,0.5)$.}
    \label{fig:bcs}
\end{figure}

For a density $z\in\cZ$, let $\rho=\mathbb{F}z$ be the filtered density, which
solves \eqref{eq:filter}.
We discretize \eqref{eq:filter} using linear finite elements and denote
the finite-element approximation of $\rho$ by $\rho^h$.  As we will see, we
must bound the $L^\infty$-error in the density variables, which we do using
\eqref{reliability} with $A=r\mathbb{I}$, $b\equiv 1$, $q=z$ and
$\Gamma_D=\emptyset$.
To maintain coercivity and boundedness of \eqref{eq:filter},
we use a lumped mass matrix for the second term on the left-hand side to ensure
that the finite-element solution of the filter equation \eqref{eq:filter} does
not violate the discrete maximum principle
\cite{HAntil_DPKouri_DRidzal_DBRobinson_MSalloum_2023a}. 

In contrast to the filter equation, we discretize the state equation
\eqref{eq:heat} using quadratic finite elements.  We denote
the continuous state by $u=S(z)$, the continuous state associated with
the discrete filtered density $\rho^h$ by $\bar{u}^h$ and the fully
discrete state by $u^h$.  To bound the error $u-u^h$, we individually
bound the errors $u-\bar{u}^h$ and $\bar{u}^h-u^h$.  Since $u^h$ is the
finite-element approximation of $\bar{u}^h$, we can employ similar error
estimates as in \Cref{sec:poisson}, where we note here that the adjoint
and state variables associated with \eqref{eq:heat} are equal up to
scaling by $-1$.  Consequently, we do not solve the adjoint
equation or account for the adjoint error in mesh refinement.

To bound the error $w\coloneqq u - \bar{u}^h$, we first leverage
\eqref{eq:heat}, which yields
\begin{align} \label{subequations}
  \int_\Omega (K(\rho)\nabla w)\cdot\nabla v\,\textup{d}x = \int_\Omega ((K(\rho)-K(\rho^h))\nabla \bar{u}^h\cdot\nabla v\,\textup{d}x \quad\forall\,v\in\cU.
\end{align}
Setting $v=w$ in \eqref{subequations} and using $K(\cdot) \ge K_{\min}$ yields
\begin{align} \label{estimate}
  K_{\min} \|\nabla w\|_{L^2(\Omega)}^2 \le 
  \int_\Omega K(\rho)|\nabla w|^2\,\textup{d}x = -\int_\Omega ((K(\rho)-K(\rho^h))\nabla \bar{u}^h)\cdot\nabla w\,\textup{d}x \\
  \le \|K(\rho) - K(\rho^h)\|_{L^\infty(\Omega)} \,
\|\nabla \bar{u}^h\|_{L^2(\Omega)} \,
\|\nabla w\|_{L^2(\Omega)}
\end{align}
where the final inequality follows from H\"older's inequality.
Hence
\begin{align*}
K_{\min}\|\nabla (u - \bar{u}^h)\|_{L^2(\Omega)}
& \le (K_{\max}-K_{\min}) \| \rho^3 - (\rho^h)^3 \|_{L^\infty(\Omega)} \,
\|\nabla \bar{u}^h\|_{L^2(\Omega)}  \nonumber \\
& \hspace*{-1.4cm}  = (K_{\max}-K_{\min}) \| \rho - \rho^h\|_{L^\infty(\Omega)} \| \rho^2 + \rho\rho^h + (\rho^h)^2\|_{L^\infty(\Omega)} \,
\|\nabla \bar{u}^h\|_{L^2(\Omega)}.
\end{align*}
Finally, since $\bar{u}^h\in\cU$, we bound the error $w$ using
\eqref{reliability}.  Combining this with \eqref{reliability-U} (with
$A=K(\rho^h)\mathbb{I}$, $b\equiv 0$ and $\Gamma_N=\partial\Omega_N$) to bound
the error between $\bar{u}^h$ and $u^h$ yields the error bound 
\[
  \|u-u^h\|_{\cU} \leq C |\log h|^2 \sum_{i=1}^3 \xi_{\infty,i}^h
  + \left(\sum_{i=1}^3 (\xi_{\cU,i}^h)^2\right)^{\frac{1}{2}}.
\]
where $C>0$ is a positive constant independent of $h$.

Owing to the symmetry of the domains and boundary conditions in \Cref{fig:bcs},
the solutions to \eqref{eq:top_full} are also symmetric.  To enforce this
symmetry, we discretize and optimize over half the domain.  For the first
example, the half domain is $\{(x,y)\in\Omega\,\vert\,x\le 1-y\}$ and the
second is $(0,1)\times(0,0.5)$. The initial mesh for both examples on the full
domain was generated by bisecting the elements of a uniform $64 \times 64$ mesh
of quadrilateral elements.  After restricting to the half domains, this
resulted in 4225 DoFs for the first example and 4193 DoFs for the
second; note the discrepancy stems from the Neumann conditions. 

We depict the results in \Cref{tbl:top2} and \Cref{tbl:top1}.
In \Cref{tbl:top2}, we observe refinement around the sharp features of the
density, thereby resulting in a crisper image. The refinement starts on
iteration~48, and \Cref{alg} achieves the stopping tolerance at iteration 90.
Note that our optimization routine does roughly 10 iterations with 150,000
DoF. Likewise, \Cref{tbl:top1} depicts refinement again around
the sharp features of the density. Additionally, we hit the DoF
limit near the end of the run, again performing about 10 iterations
with the full DoF budget. Importantly, the refinement during the final few iterations
of \Cref{alg} addresses the fine-scaled features near the edges of the density,
cf.\ the last rows of \Cref{tbl:top2,tbl:top1}.
\begin{figure}[!ht]
\centering
\begin{tabular}{@{}c|c|@{}c@{}c@{}c}
\toprule
$N_{\rm dof}$  & $ k$   & Grid &  $z$ & $u^h$ \\
\hline
$4225$          & 26 & \rowincludegraphics[width=.25\linewidth]{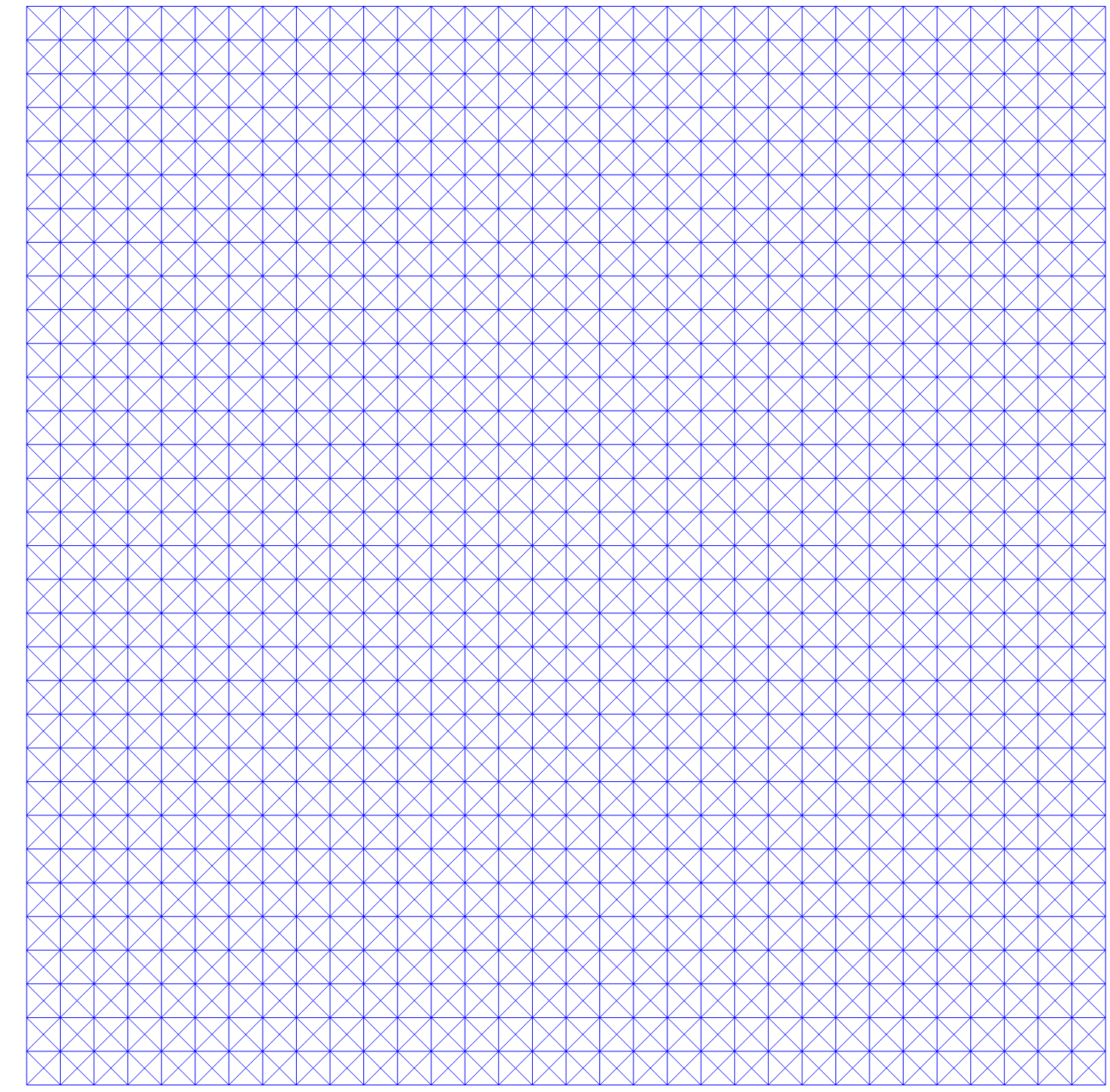}
                     & \rowincludegraphics[width=.25\linewidth]{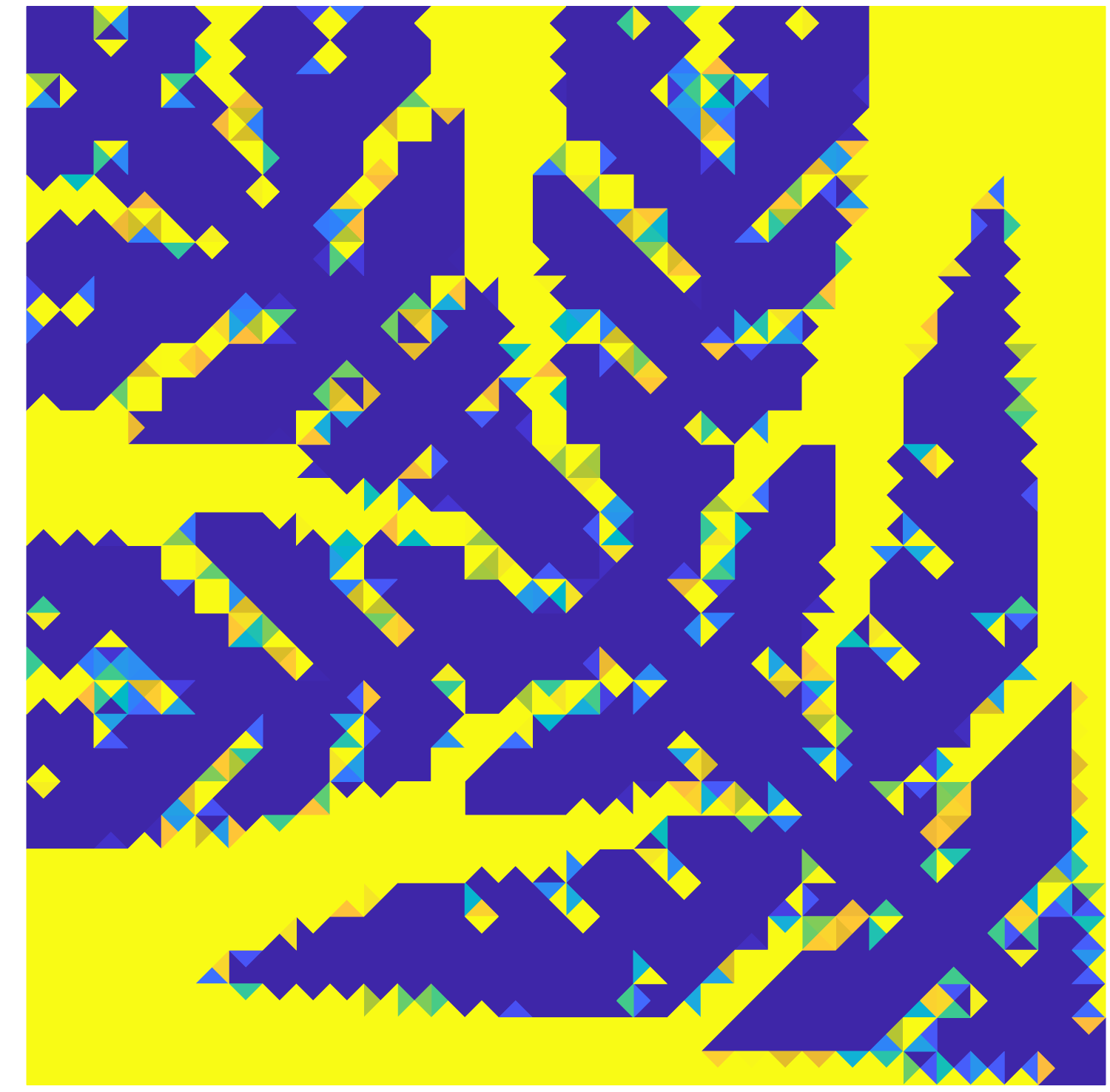}
                     & \rowincludegraphics[width=.25\linewidth]{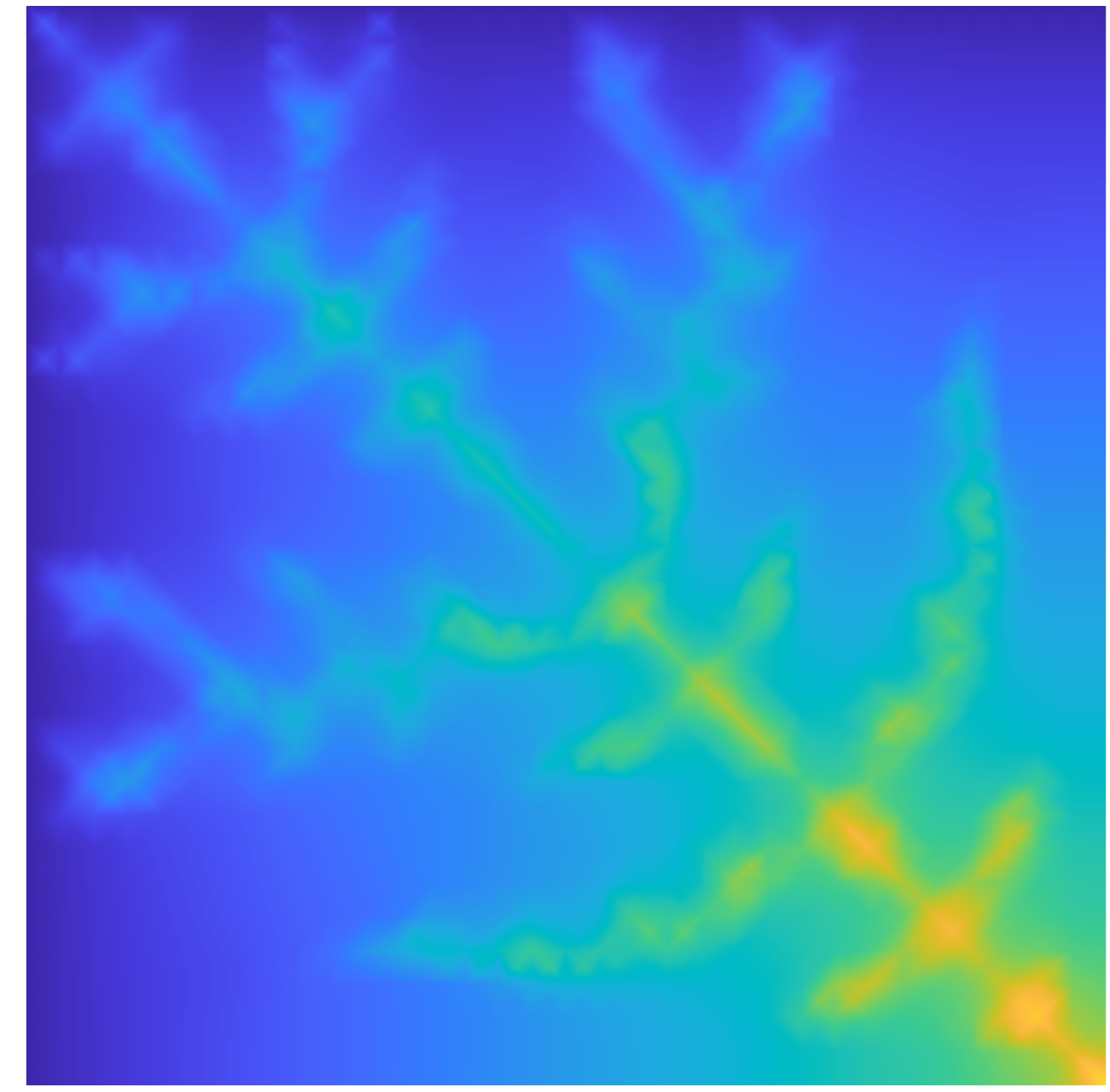}\\
$5\cdot 10^4$   & 48 & \rowincludegraphics[width=.25\linewidth]{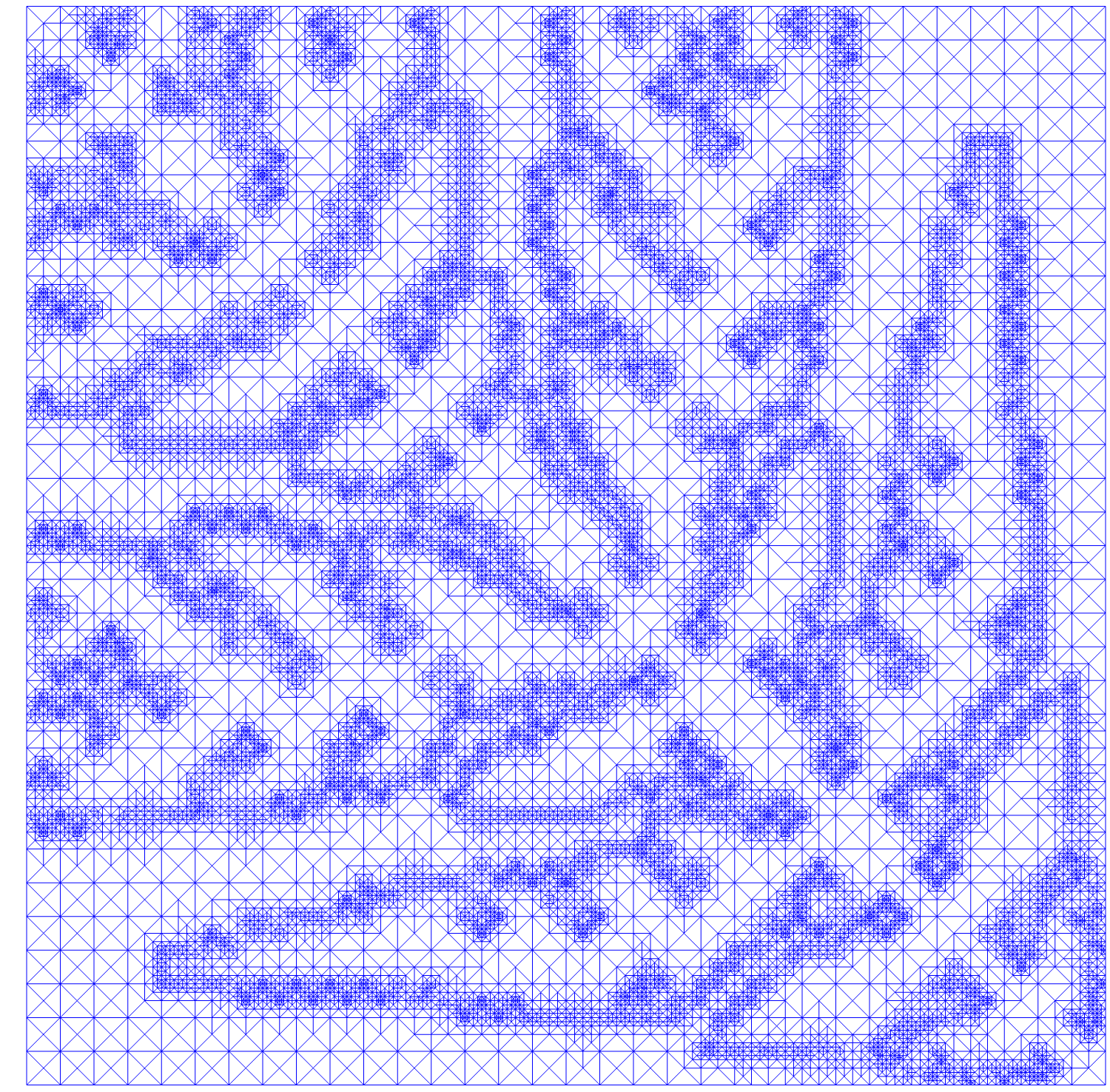}
                     & \rowincludegraphics[width=.25\linewidth]{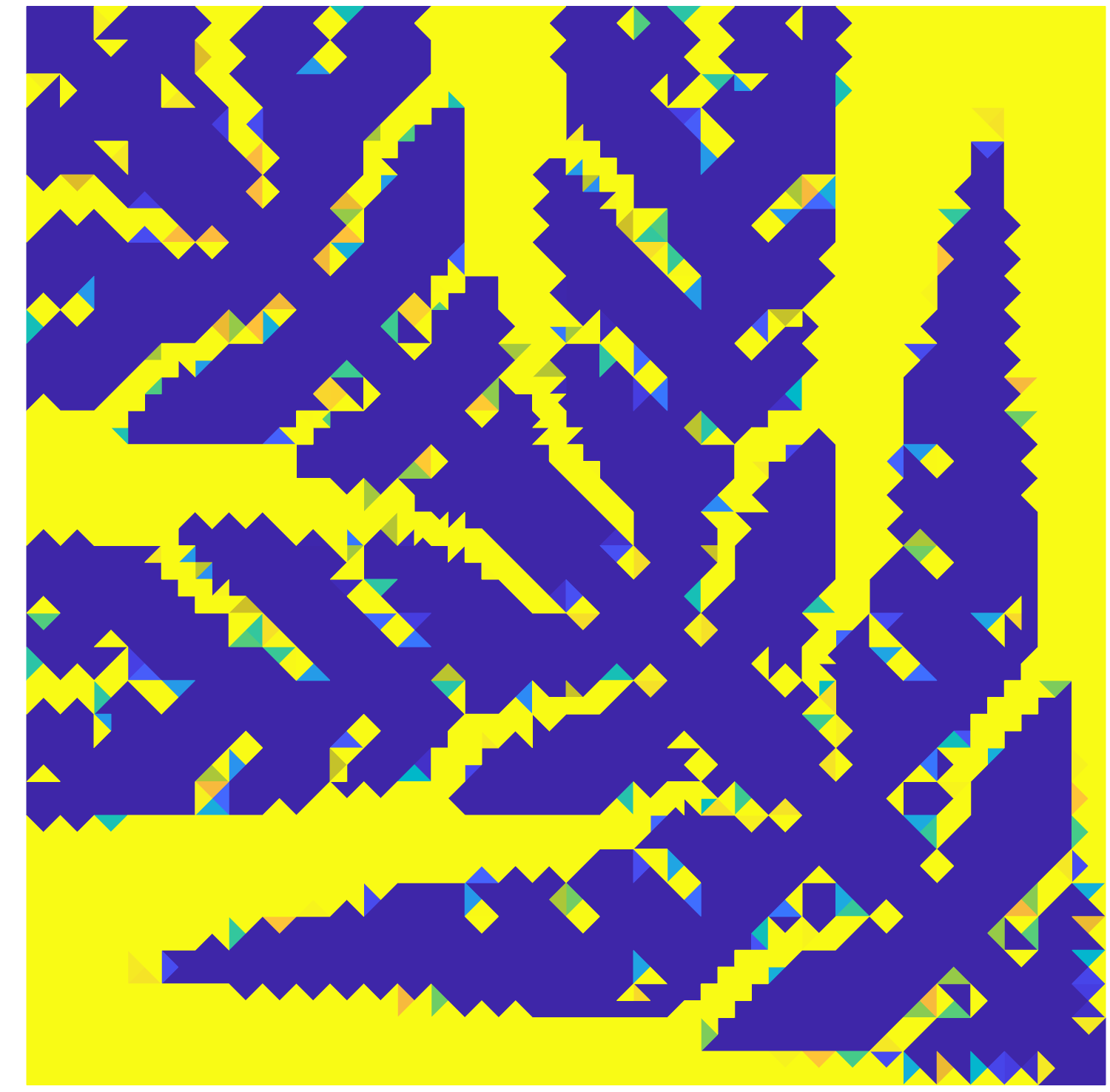}
                     & \rowincludegraphics[width=.25\linewidth]{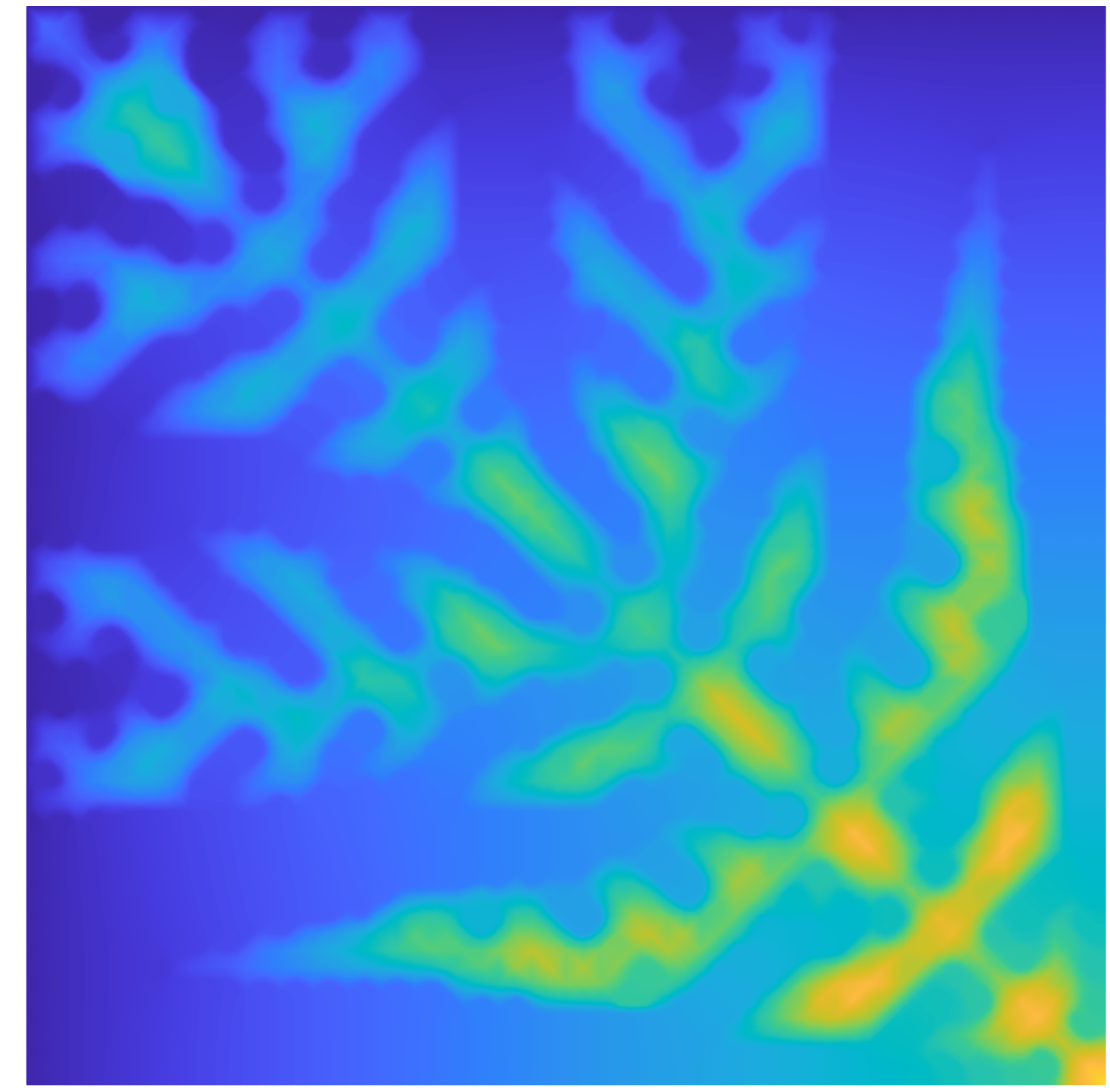}\\
$1.1\cdot 10^5$ & 55 & \rowincludegraphics[width=.25\linewidth]{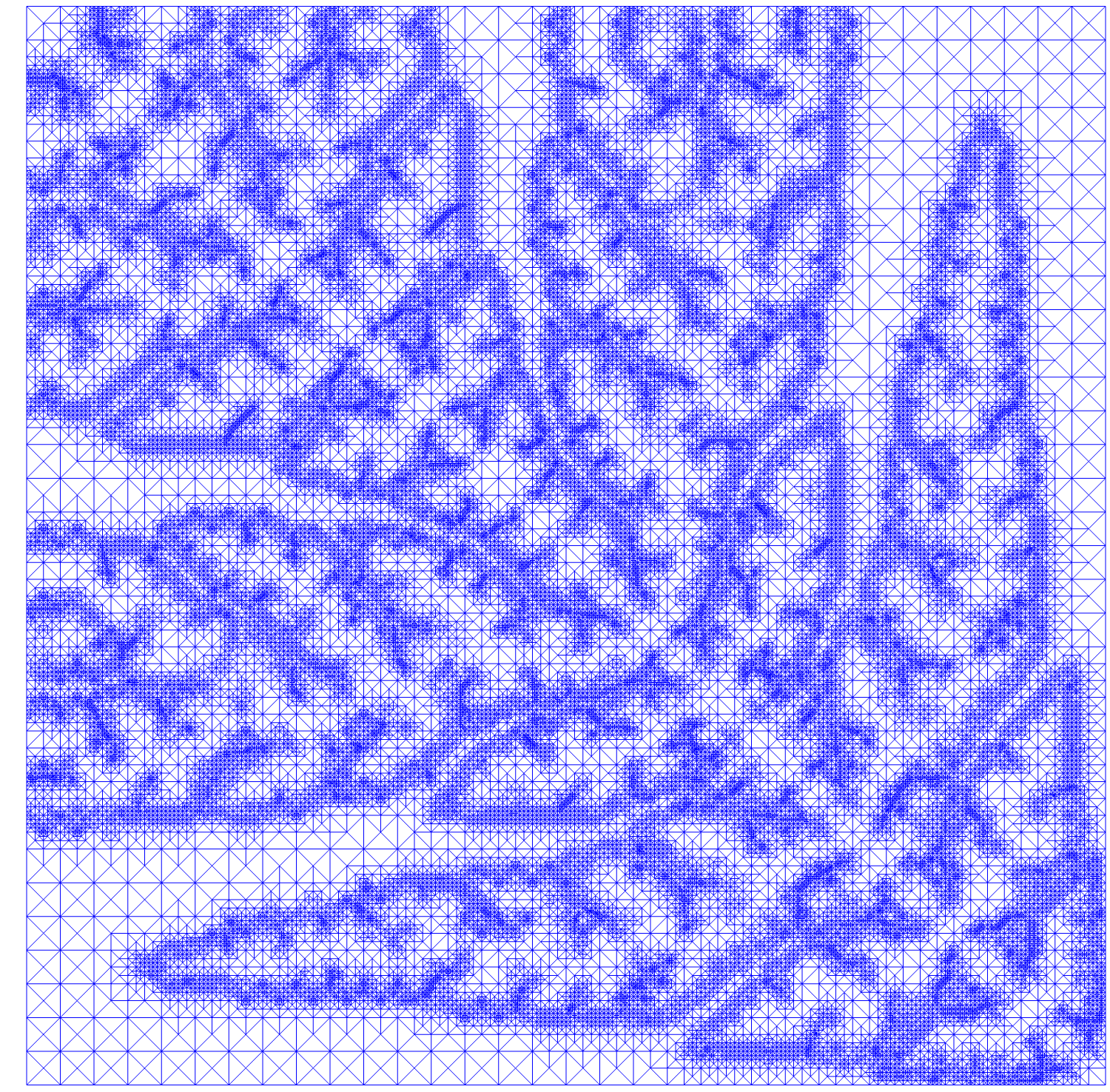}
                     & \rowincludegraphics[width=.25\linewidth]{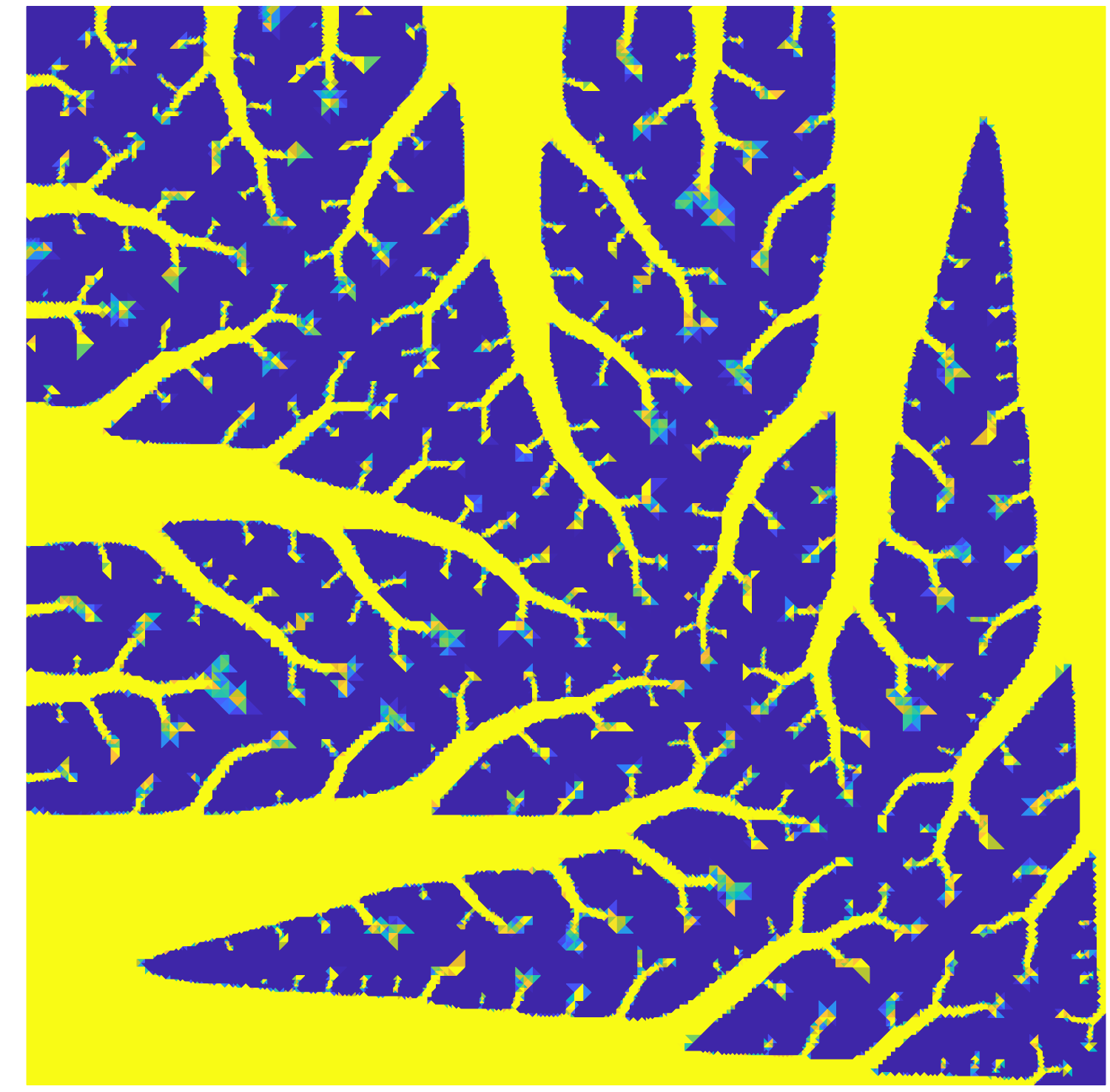}
                     & \rowincludegraphics[width=.25\linewidth]{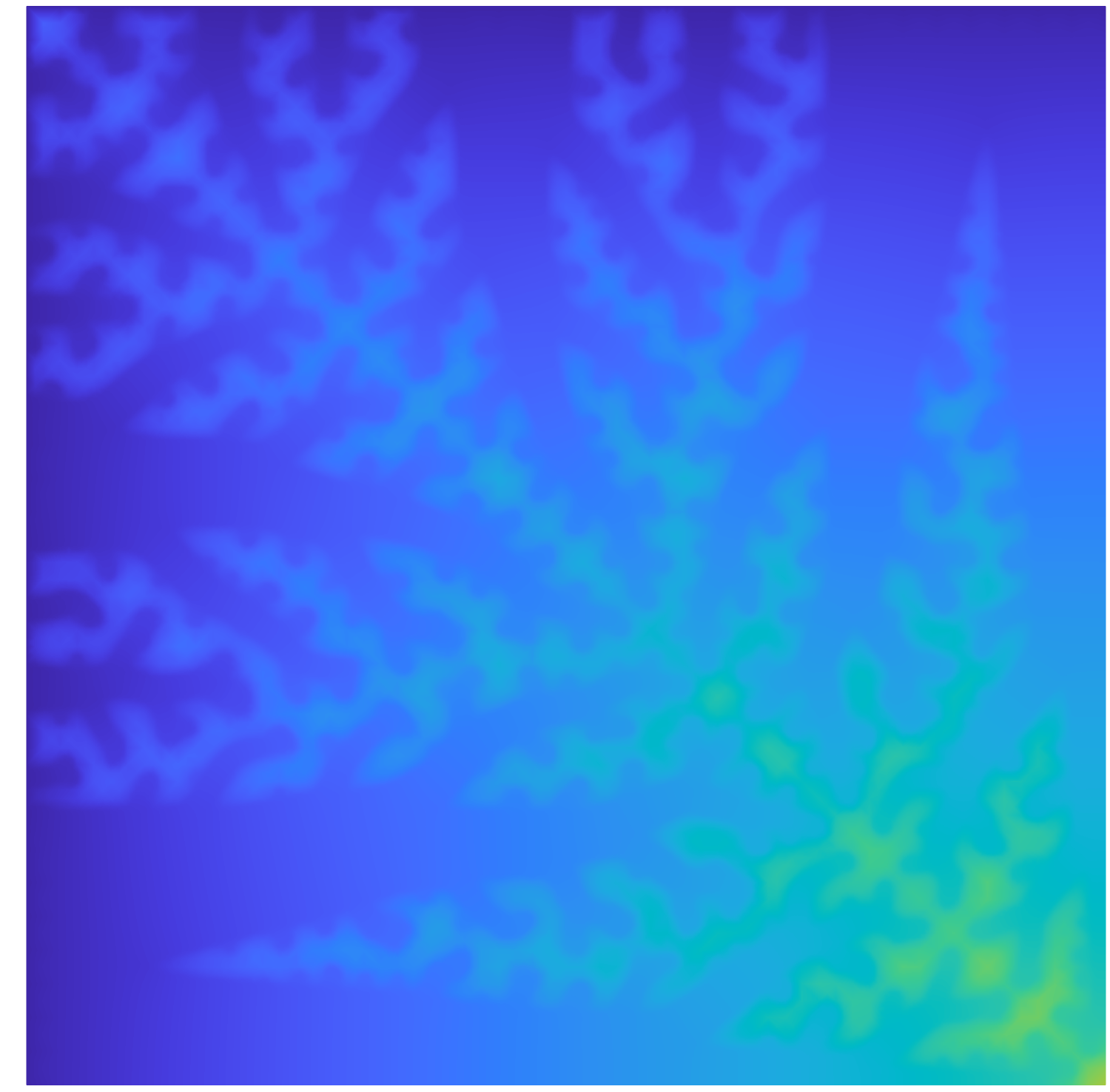}\\
$1.5\cdot10^5$  & 90 & \rowincludegraphics[width=.25\linewidth]{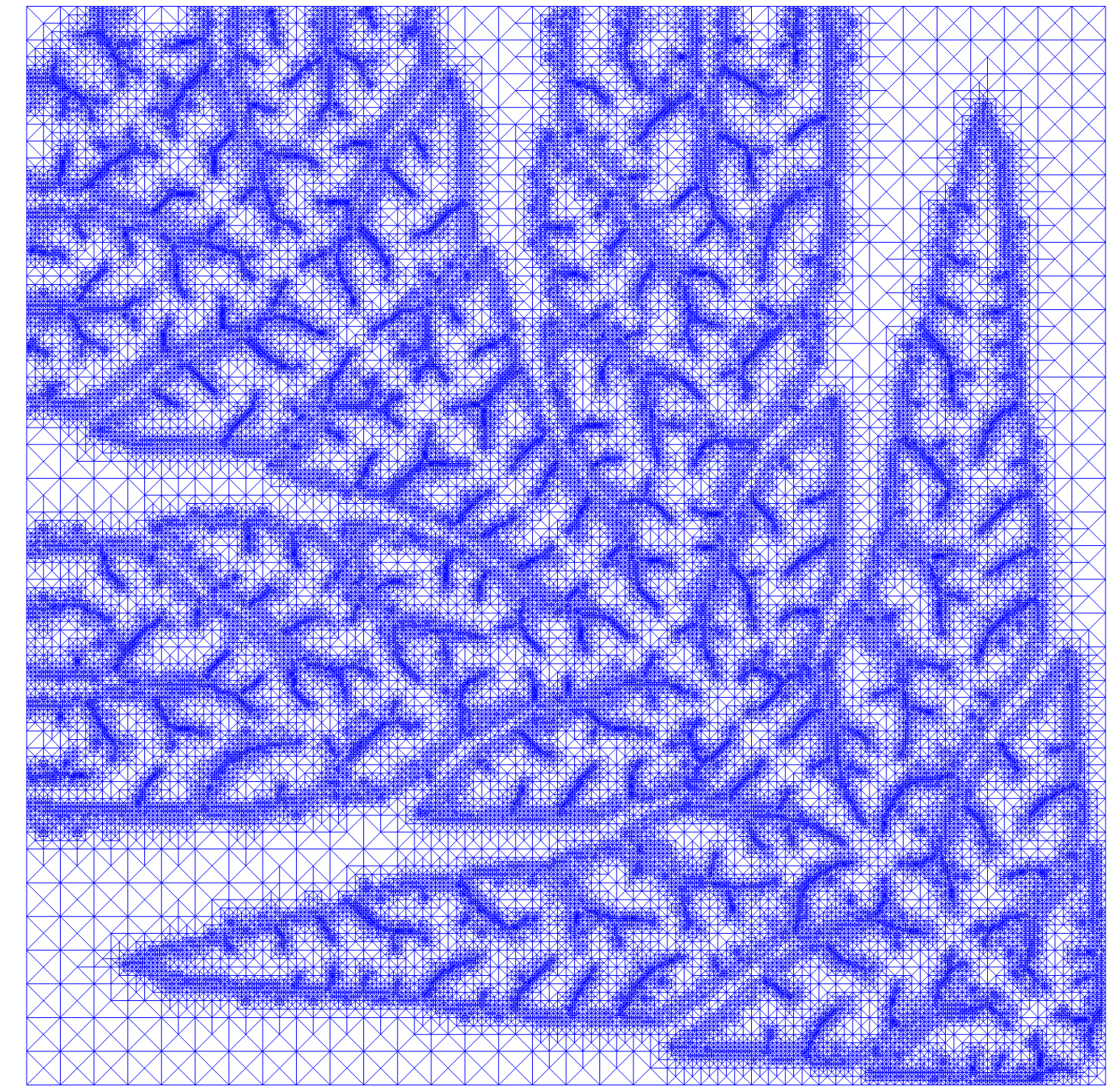}
                     & \rowincludegraphics[width=.25\linewidth]{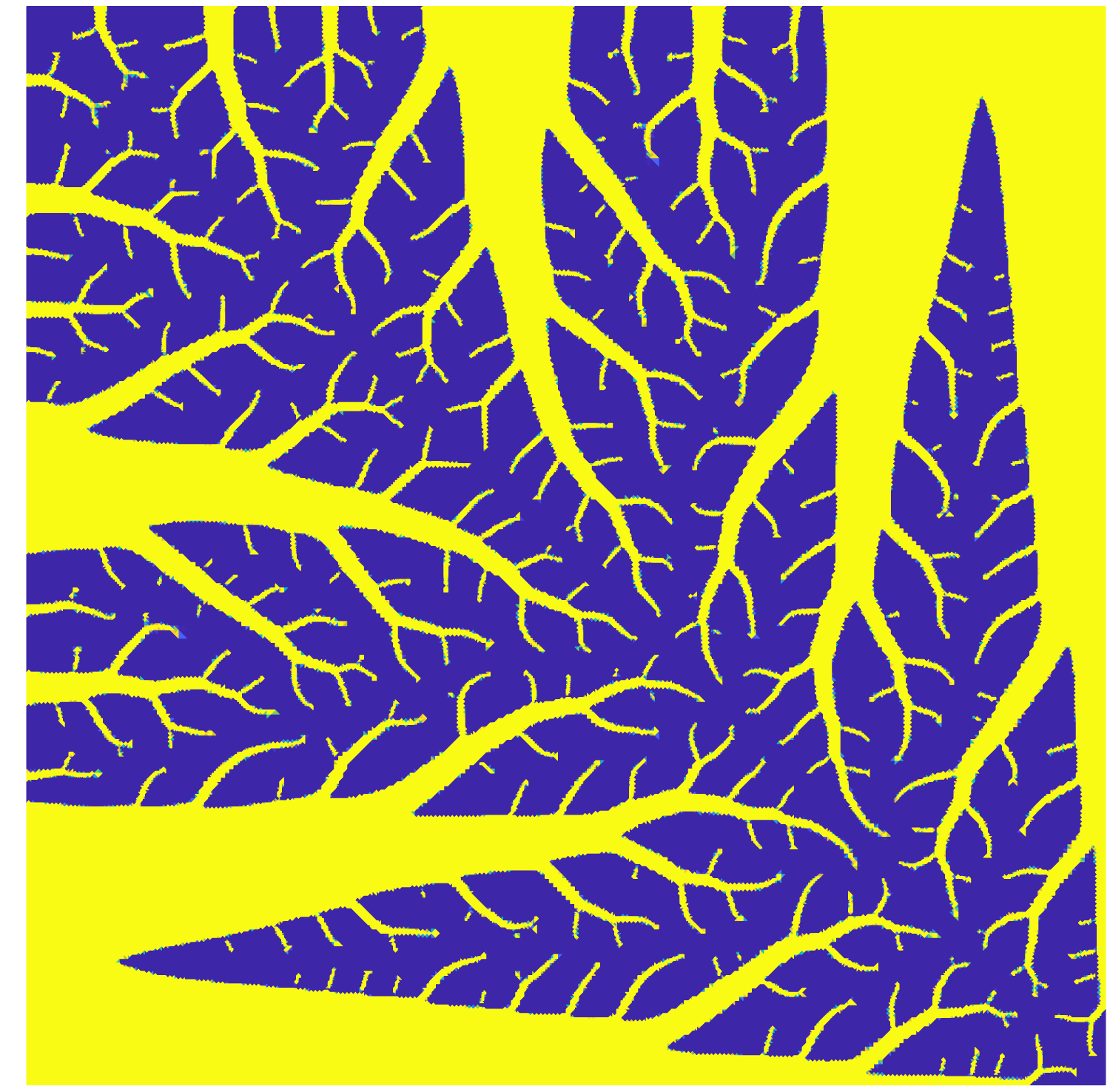}
                     & \rowincludegraphics[width=.25\linewidth]{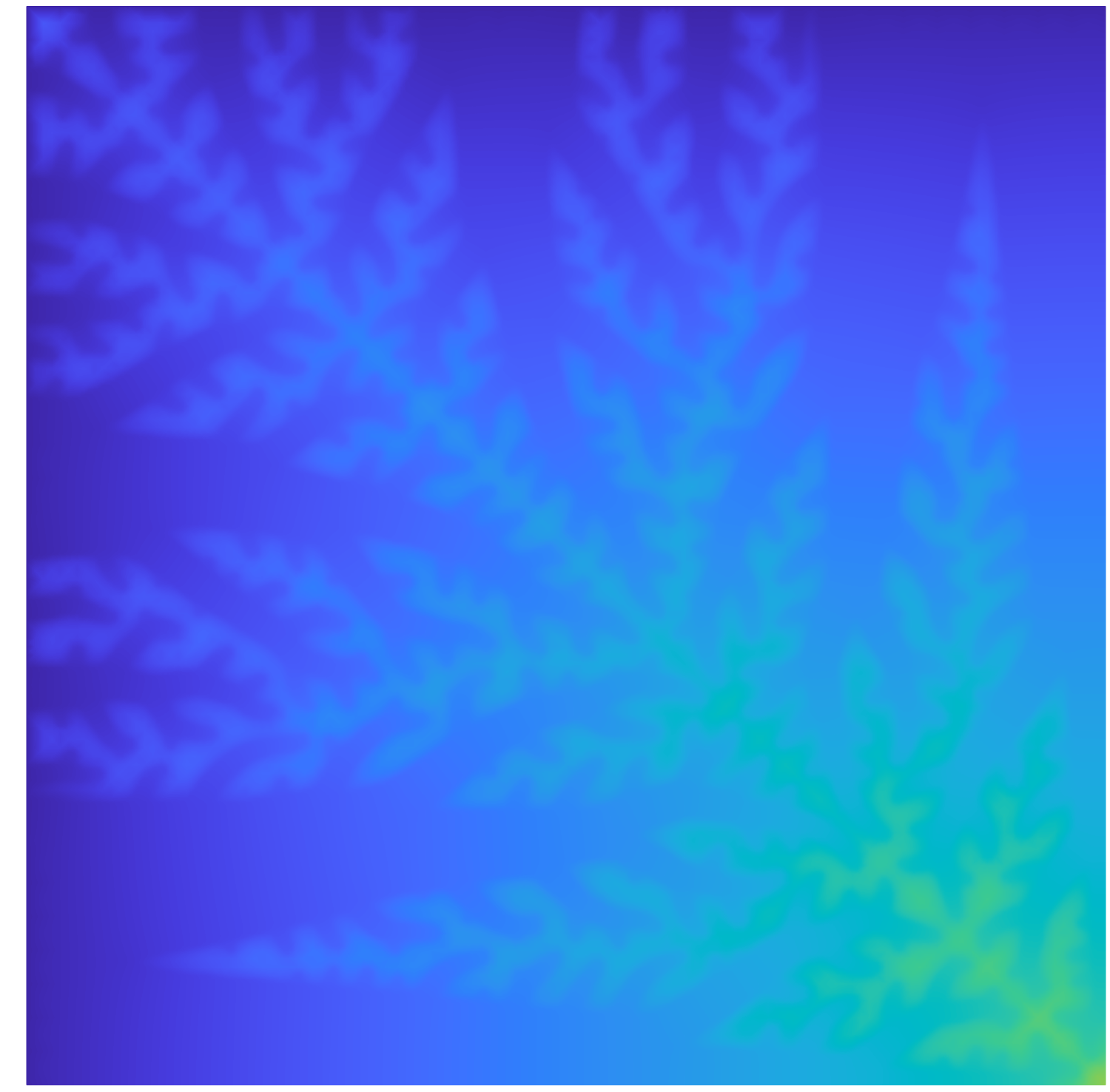}\\
\bottomrule
\end{tabular}
\caption{Results of \Cref{alg} for the first heat conduction topology
optimization example. Our algorithm terminated after 90 iterations. Note
$N_{\rm dof}$s are approximate.}
\label{tbl:top2}
\end{figure}

\begin{figure}[!ht]
\centering
\begin{tabular}{@{}c|@{}c@{}|@{}c@{}c@{}c}
\toprule
$N_{\rm dof}$     & $k$   & Grid & $z$ & $u^h$ \\
\hline
$4193$          & 21  & \rowincludegraphics[width=.25\linewidth]{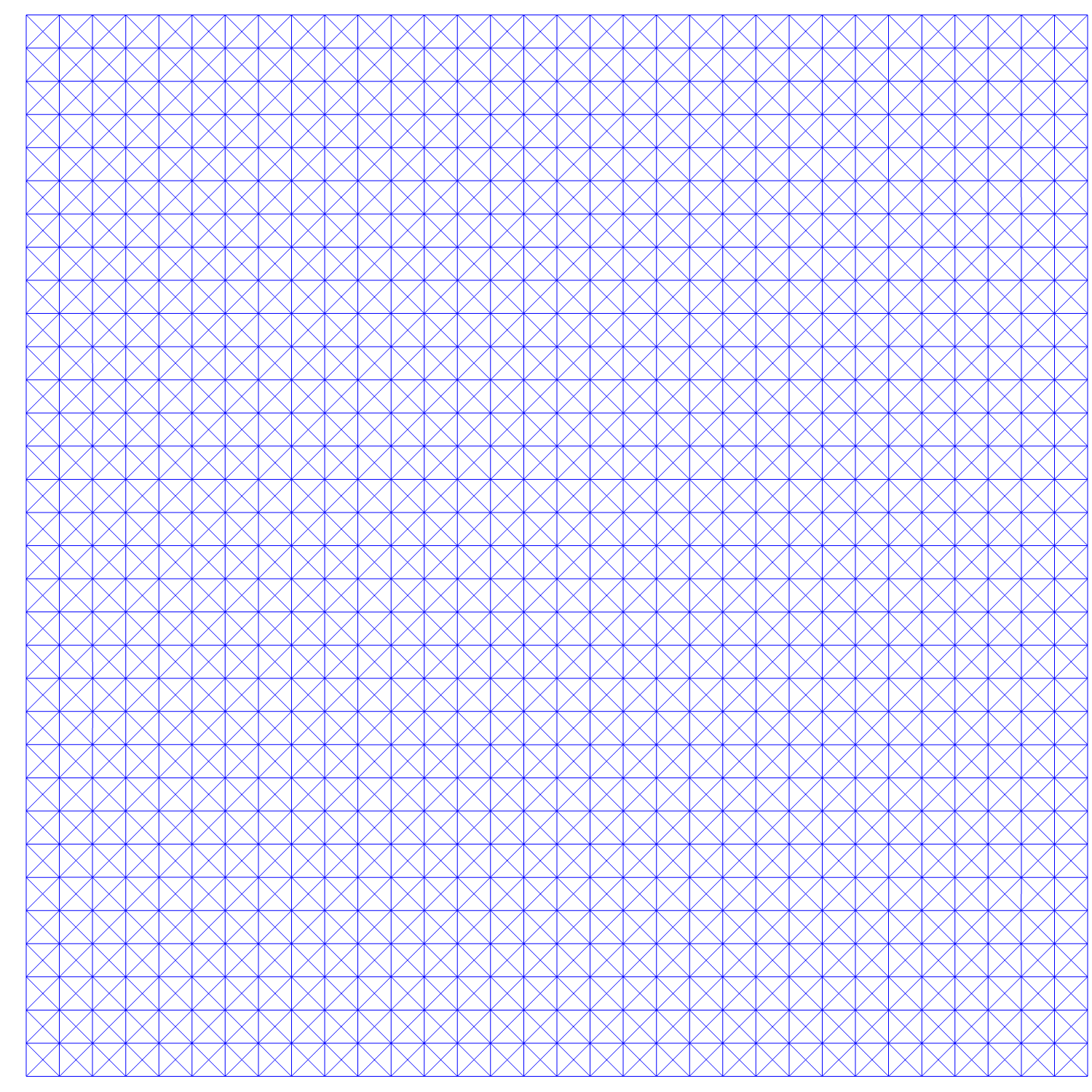}
                      & \rowincludegraphics[width=.25\linewidth]{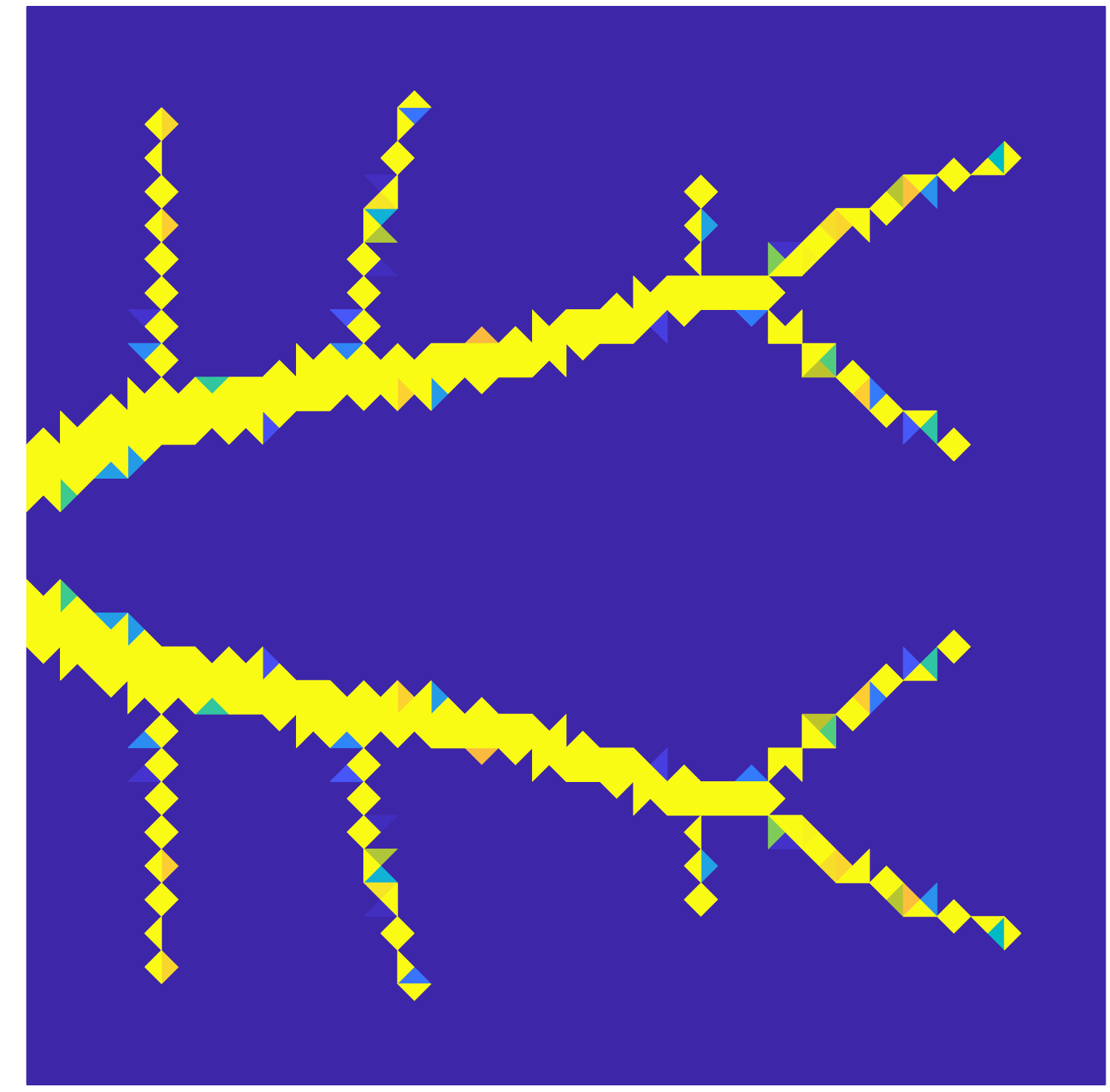}
                      & \rowincludegraphics[width=.25\linewidth]{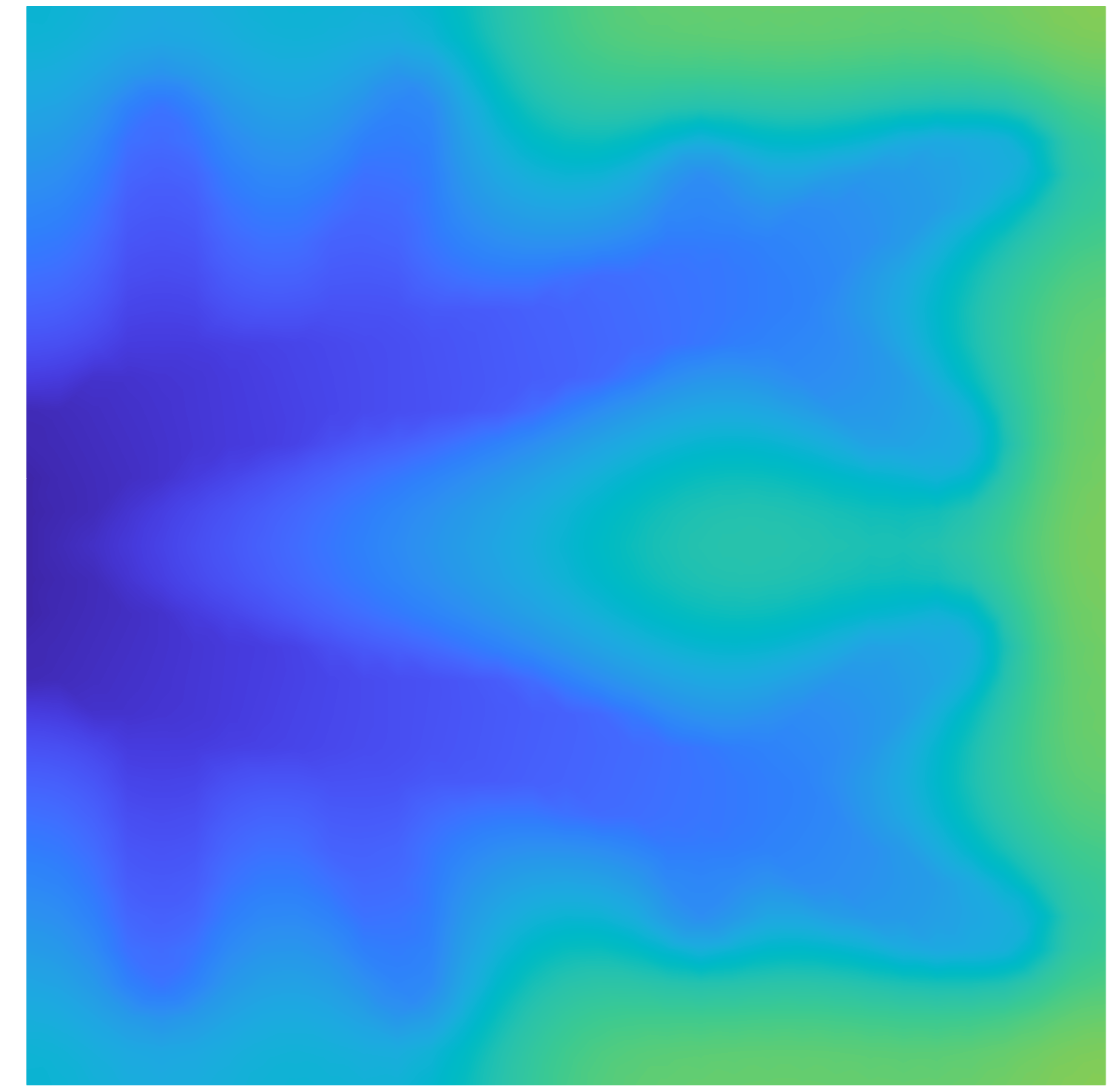}\\
$2\cdot 10^4$   & 52  & \rowincludegraphics[width=.25\linewidth]{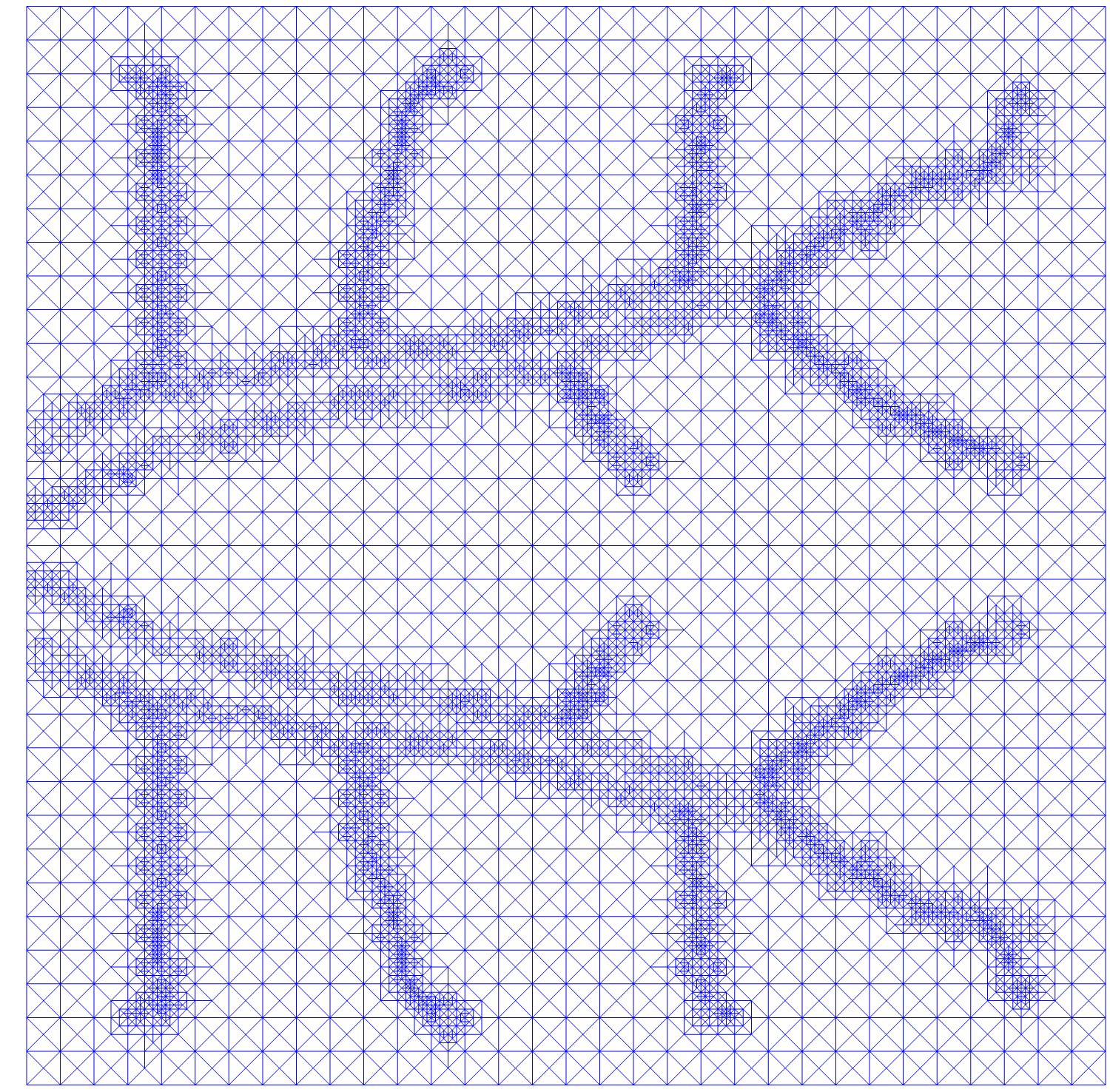}
                      & \rowincludegraphics[width=.25\linewidth]{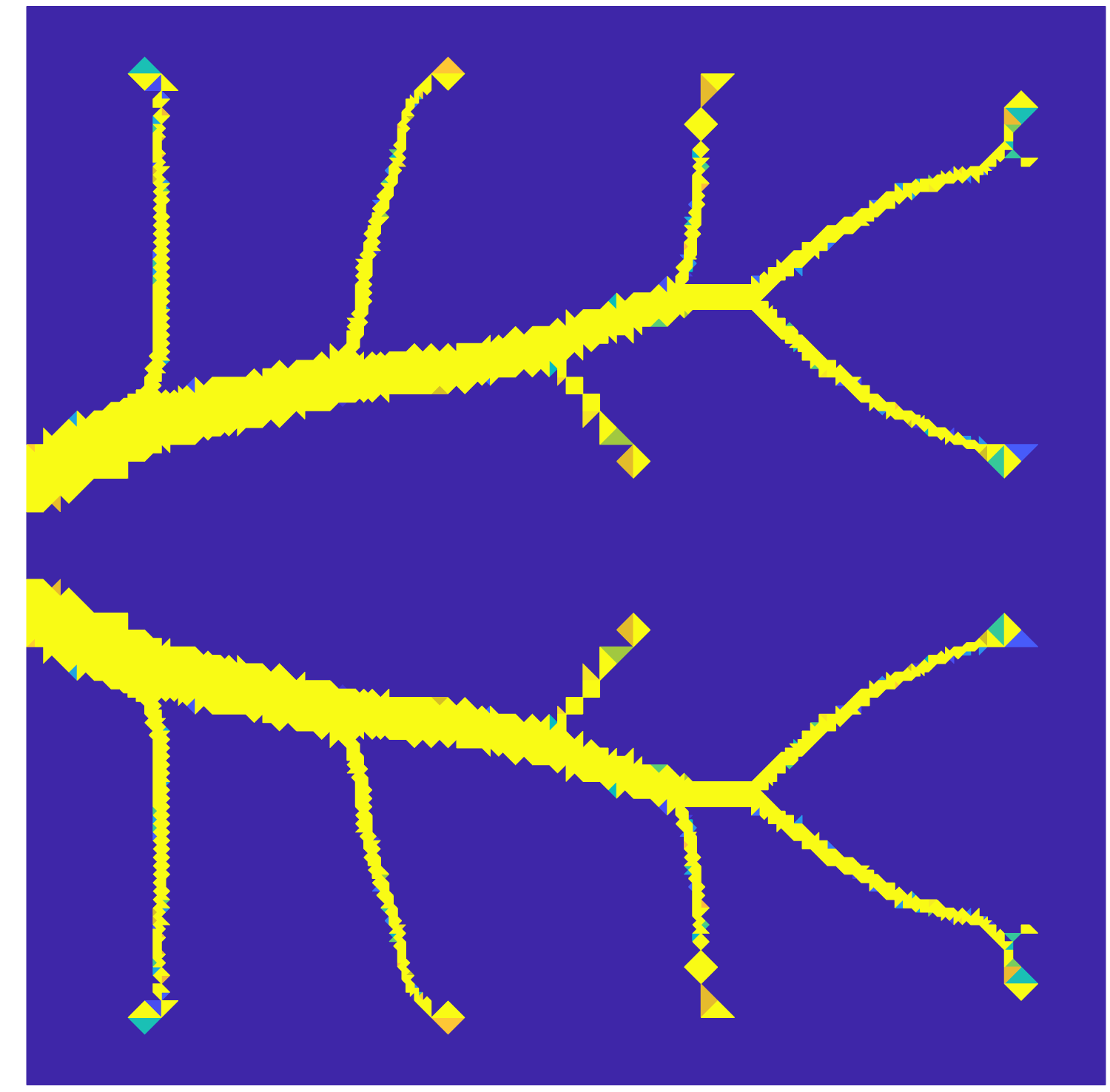}
                      & \rowincludegraphics[width=.25\linewidth]{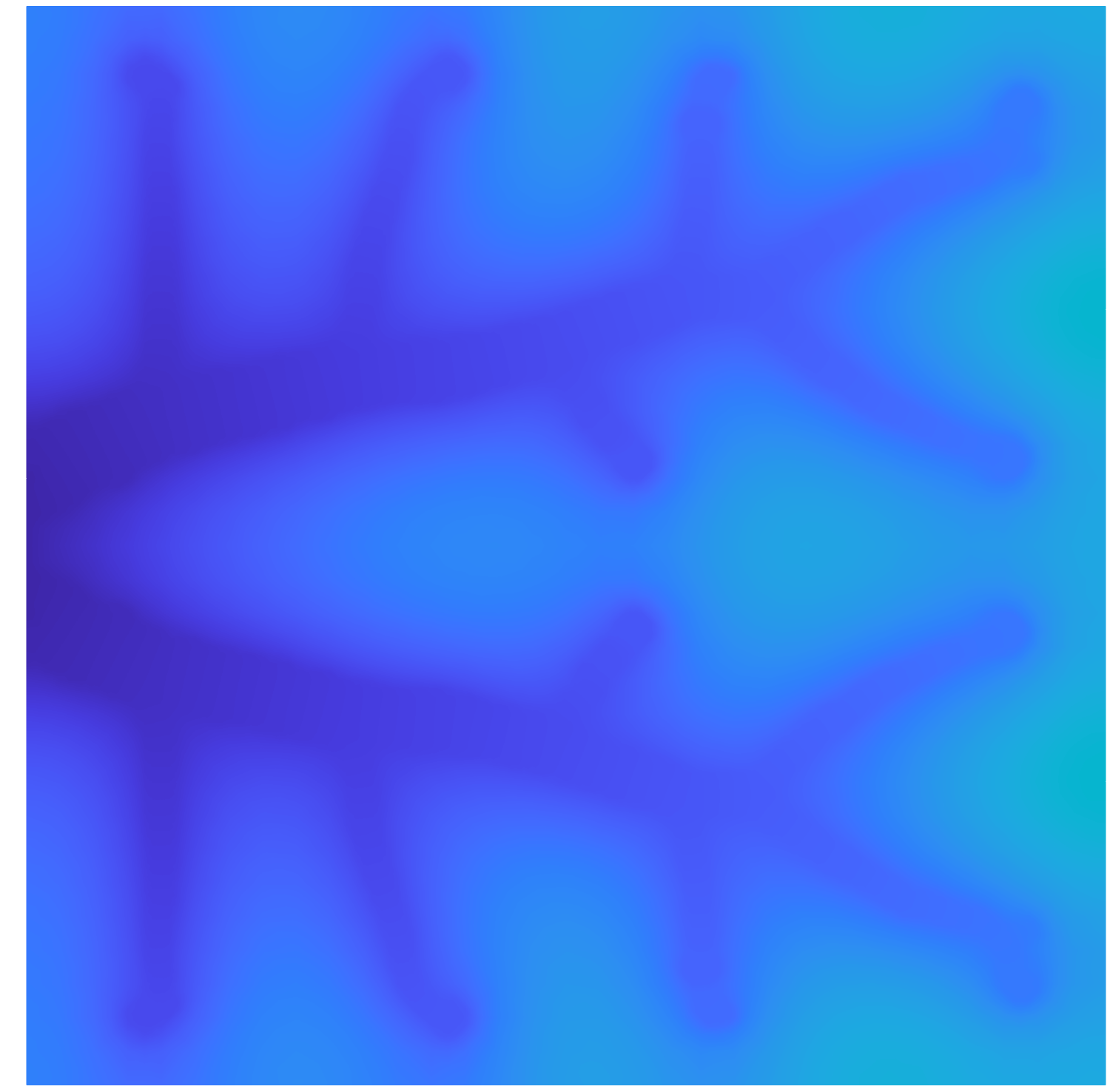}\\
$5\cdot 10^4$   & 76  & \rowincludegraphics[width=.25\linewidth]{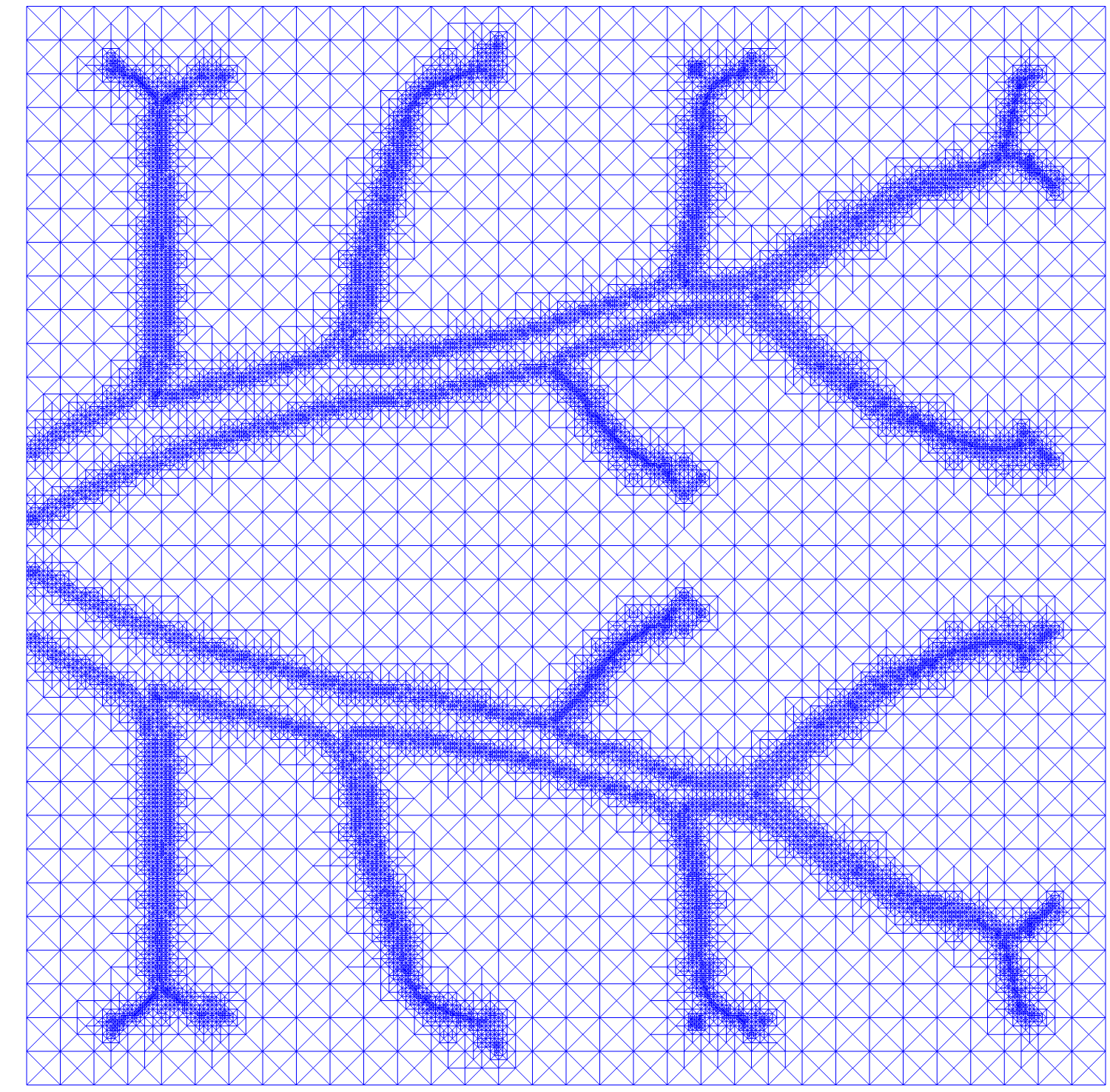}
                      & \rowincludegraphics[width=.25\linewidth]{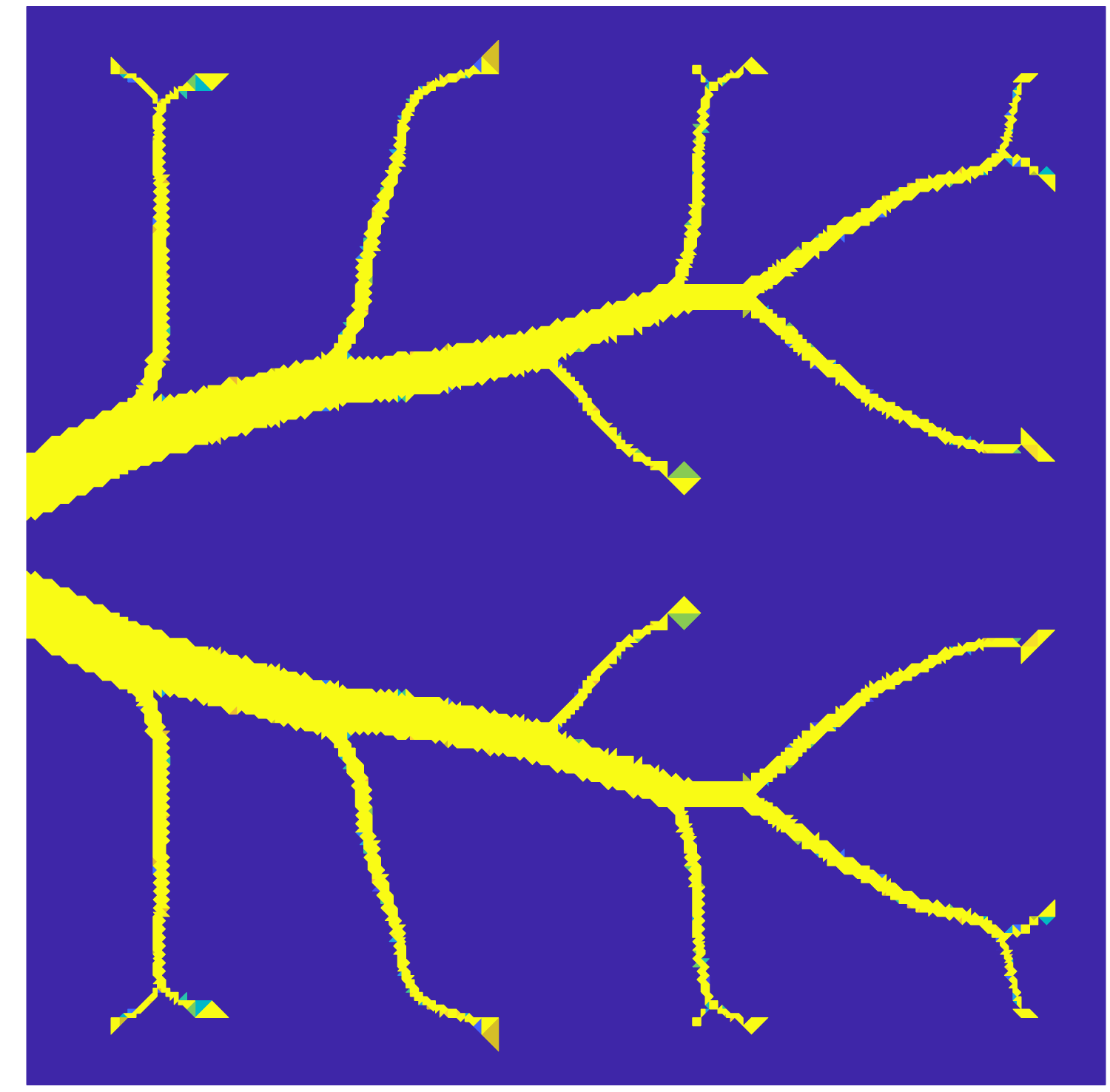}
                      & \rowincludegraphics[width=.25\linewidth]{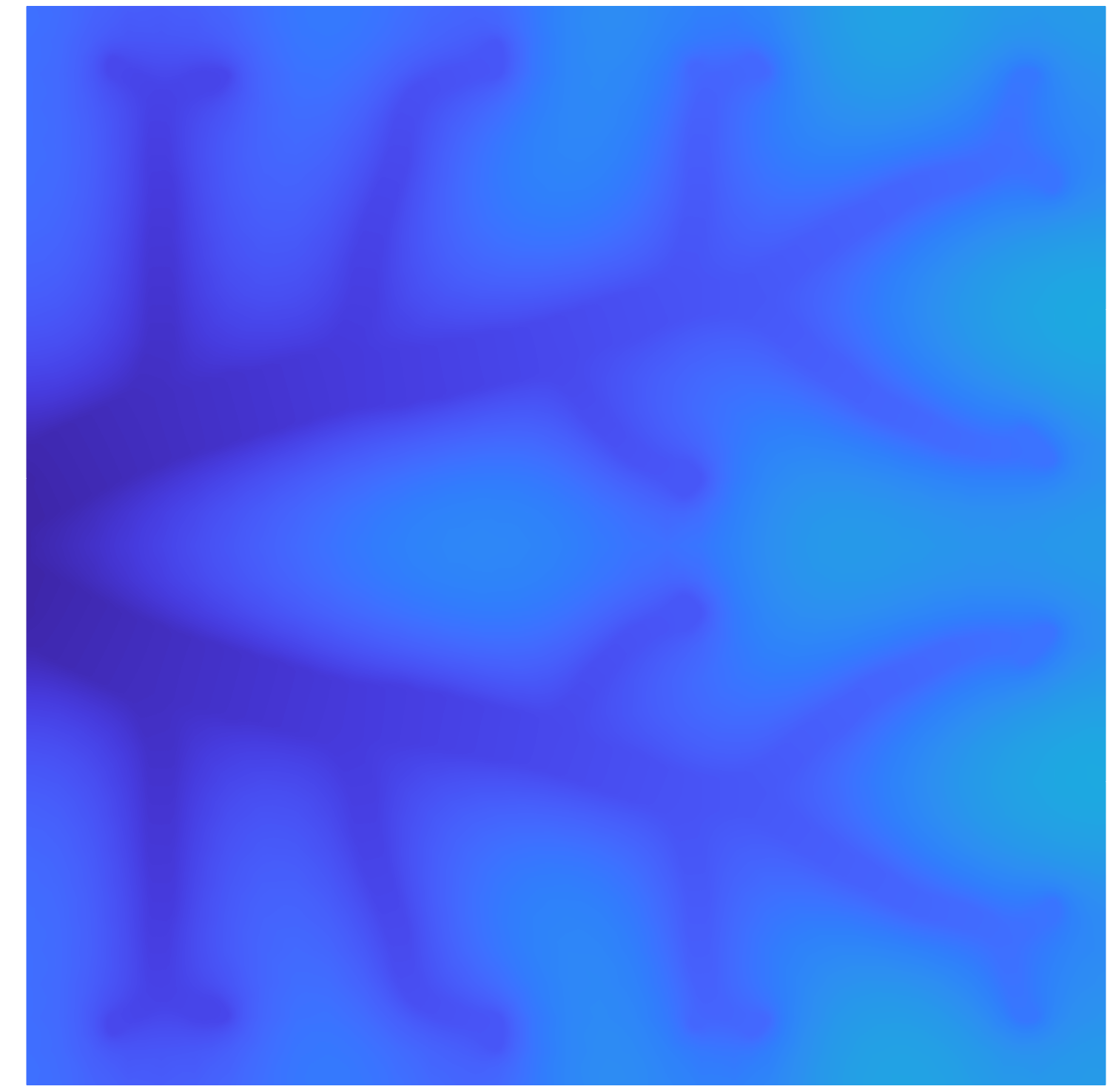}\\
$1.5\cdot 10^5$ & 147 & \rowincludegraphics[width=.25\linewidth]{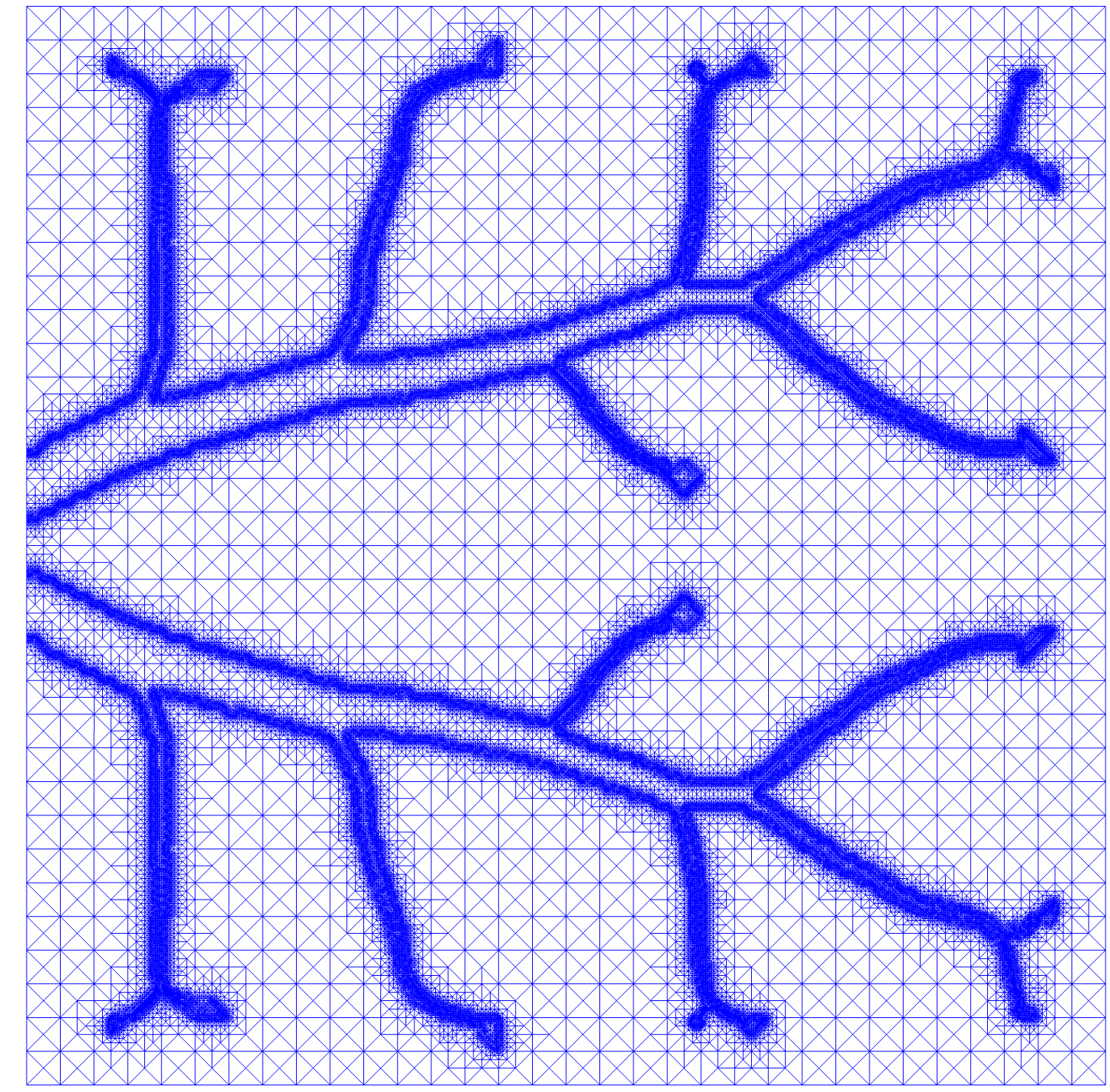}
                      & \rowincludegraphics[width=.25\linewidth]{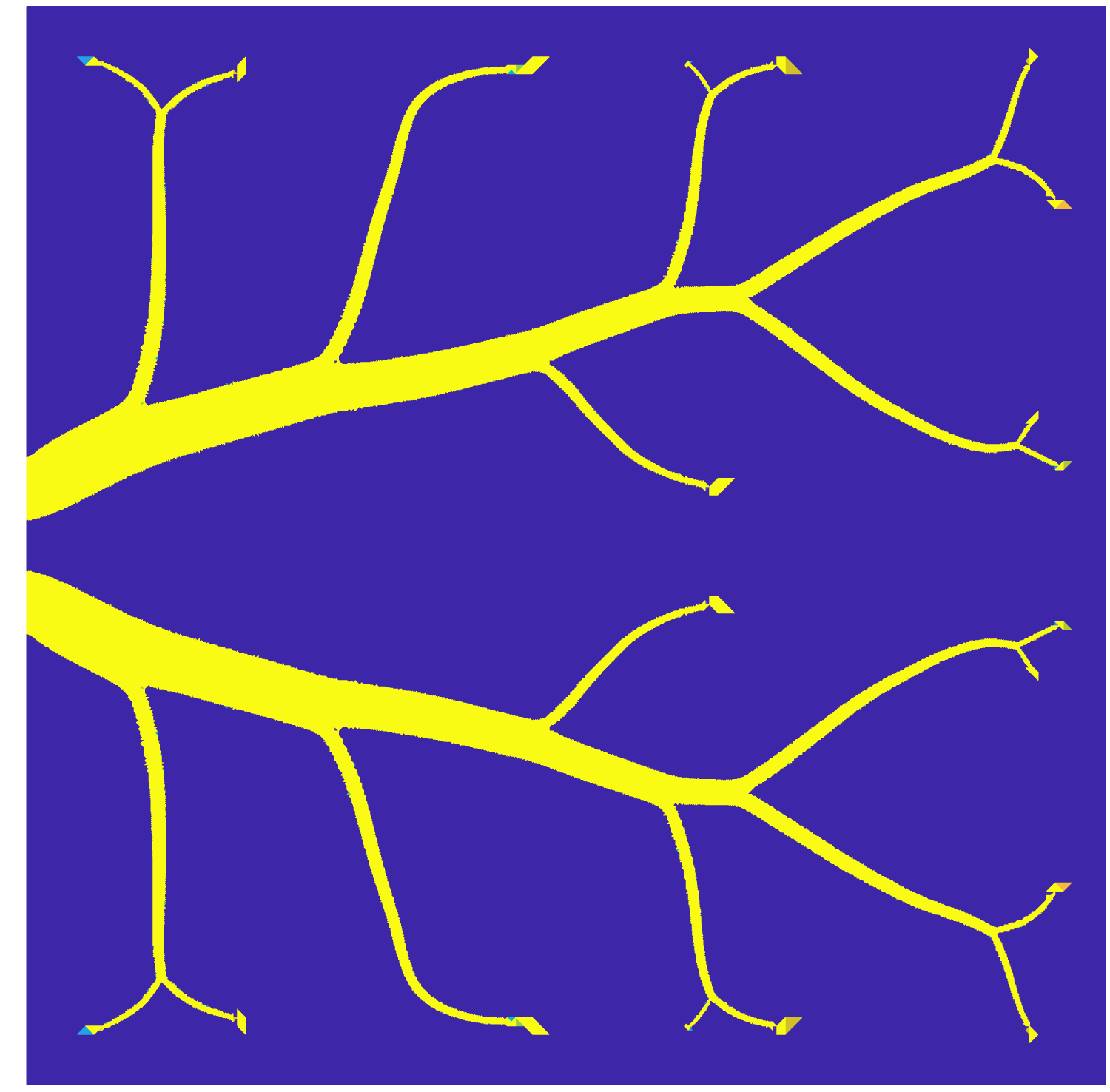}
                      & \rowincludegraphics[width=.25\linewidth]{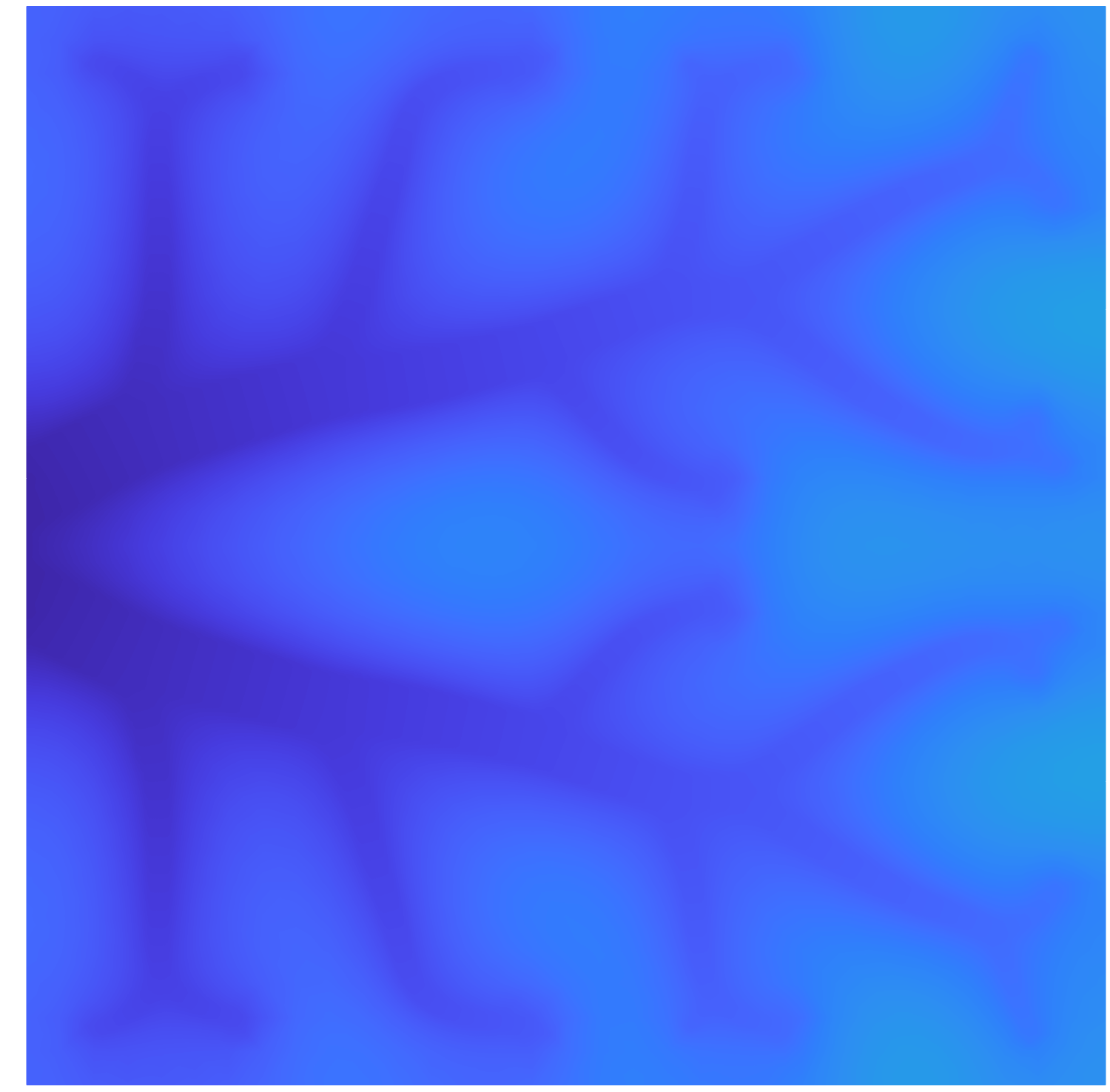}\\
\bottomrule
\end{tabular}
\caption{Results of \Cref{alg} for the second heat conduction topology
optimization example. Our algorithm terminated after 147 iterations.
Note the $N_{\rm dof}$s are approximate.}
\label{tbl:top1}
\end{figure}